\setlist[1]{itemsep=3pt}
\def\csname ver@etex.sty\endcsname{3000/12/31}
\newcommand{\R}{\mathbb{R}}
\newcommand{\dd}{\mathrm{dist}}
\newcommand{\N}{\mathbb{N}}
\newcommand{\dist}{{\mathrm{dist}_Y|_X}}
\newcommand{\Poly}{\mathscr{P}}
\def\Id{\operatorname{Id}}
\def\lini{k}
\def\quadi{\iota}
\newtheorem{theorem}[subsubsection]{Theorem}
\newtheorem{proposition}[subsubsection]{Proposition}
\newtheorem{corollary}[subsubsection]{Corollary}
\newtheorem{lemma}[subsubsection]{Lemma}
\theoremstyle{definition}
\newtheorem{definition}[subsubsection]{Definition}
\theoremstyle{remark} 
\newtheorem{remark}[subsubsection]{Remark}
\newtheorem{example}[subsubsection]{Example}
\newcommand{\be}{\begin{equation}}
\newcommand{\ee}{\end{equation}}
\title[Morse Theory of Euclidean Distance Functions from Algebraic Hypersurfaces]{Morse Theory of Euclidean Distance Functions from Algebraic Hypersurfaces}
\author{Andrea Guidolin, Antonio Lerario, Isaac Ren, and Martina Scolamiero}
\address{University of Southampton, School of Mathematical Sciences, SO17 1BJ Southampton, UK}
\email{Andrea Guidolin <a.guidolin@soton.ac.uk>}
\address{KTH, Department of Mathematics, S-10044 Stockholm, Sweden}
\email{Isaac Ren <isaacren@kth.se>}
\email{Martina Scolamiero <scola@kth.se>}
\address{SISSA, Mathematics Area, 34146 Trieste, Italy}
\email{Antonio Lerario <lerario@sissa.it>}
\date{\today}%{February 13, 2024}
\subjclass[2020]{Primary: 14P25. Secondary: 51F99, 55N31}
\keywords{Morse theory, distance functions, metric algebraic geometry, critical points, Euclidean Distance Degree, bottlenecks, parametric transversality.}
\thanks{Andrea Guidolin was supported by the dBrain collaborative project at Digital Futures at KTH,  by the Strategic Support grant of the Digitalisation Platform at KTH and by the Data Driven Life Science (DDLS) program funded by the Knut and Alice Wallenberg Foundation. Antonio Lerario was supported by the Knut and Alice Wallenberg Foundation. Isaac Ren was supported by Vetenskapsrådet. Martina Scolamiero was supported by the Wallenberg AI, Autonomous System and Software Program (WASP) funded by the Knut and Alice Wallenberg Foundation. Martina Scolamiero is affiliated with the research center Digital Futures, Stockholm, Sweden}
\begin{document}

\begin{abstract}
Let $Y\subseteq \R^n$ be a closed definable subset and $X\subseteq \R^n$ be a smooth manifold. We construct a version of Morse theory for the restriction to $X$ of the Euclidean distance function from $Y$. This is done using the notion of critical points of Lipschitz functions and applying the theory of continuous selections. In this theory, nondegenerate critical points have two indices: a quadratic index (as in classical Morse theory), and a piecewise linear index (that relates to the notion of bottlenecks).
This framework is flexible enough to simultaneously treat and unify the study of two cases of interest for computational algebraic geometry: bottlenecks and nearest point problems.
We provide a technical toolset guaranteeing the applicability of the theory to the case where $X, Y$ are generic algebraic hypersurfaces and use it to bound the number of critical points of the distance from $Y$ restricted to $X$, among other applications.
\end{abstract}

\maketitle

\section{Introduction}

\subsection{A Morse theory for distance functions}
Distance functions have been the object of extensive study over the years, classically in differential geometry \cite{Looijenga1974,Wall1977,Grove_annals,Yomdin1981,cheeger,BrockerKupper2000,Rifford2004,CohenSteinerCommaret2024}, and more recently in computational algebraic geometry  \cite{Ottaviani2018TheDF,DiRocco_Eklund_Weinstein,DEEGH2023,BSK} and computational topology \cite{Chazal_Lieutier,bobrowski2014distance,oudot2017persistence,song2023generalized,ACSD2023}.
Various interesting properties of geometric objects in $\R^n$ (e.g.\ the Euclidean Distance Degree, bottlenecks) are defined in terms of Euclidean distance and arise as some type of ``critical points'', meaning that they are defined as set of solutions of algebraic systems of equations resembling the equations for critical points of a smooth function.
Our goal here is to put these ideas  into a general systematic framework, explaining how to recover all these notions at once using the language of Lipschitz geometry, and giving  a topological interpretation of them. 

Studying critical points of functions has many applications ranging from data analysis tasks such as the characterization of the texture of porous materials in \cite{song2023generalized} to machine learning where for example critical points of objective functions over Linear Convolutional Neural Networks are studied to understand their optimization \cite{Kohn_22, Kohn_24}. Low rank approximation problems for tensors as the one in Example \ref{lowrank_approx:example} are also relevant in machine learning as well as in signal processing and data compression \cite{Bader_tensors,MAL-059}. Furthermore bottlenecks can be used to produce provably dense samples of algebraic varieties, capturing the homology of data spaces naturally encoded by algebraic equations \cite{Sampling_bottlenecks}.

Given two real algebraic subsets $X, Y \subseteq \R^n$, in this paper we will explain how to construct a version of Morse theory for the function
\[
\dist \colon X \to \R,
\]
where $\dd_Y(z) \coloneqq \min_{y\in Y} \| y - z \|$ denotes the distance function from $Y$ and $\dist$ denotes its restriction to $X$. For the applications we have in mind, both $X$ and $Y$ will be smooth algebraic hypersurfaces, but some of the results we will formulate are valid under weaker assumptions (e.g.\ in the definable\footnote{In this paper, ``definable'' means definable in some o-minimal structure, for instance in the structure of globally subanalytic sets.} world or in the smooth world under some transversality conditions).

Compared to the classical theory, the key point here is that, even under the assumption that $X$ and $Y$ are smooth and algebraic, in general the function $\dd_Y$ is not differentiable, and similarly neither is $\dist$. What is then the definition of a critical point for functions of this type? If $X$ and $Y$ are semialgebraic, it is true that $\dist$ is semialgebraic, and  one can at least set up a notion of ``critical value'' for it (see \cref{sec:triviality}), but this seems unsatisfactory: in particular, little of the geometry of distance functions is used.  Instead, we base our approach on the notion of critical points of Lipschitz functions introduced by Clarke \cite{Clarke} and apply the more general Morse theory of continuous selections, as discussed in  \cite{APS1997}. The applicability of this theory to the context of our interest requires some nondegeneracy conditions to be verified (as is the case for classical Morse theory!) and the technical part of the paper proves that these conditions are generically verified in the case of algebraic hypersurfaces (see \cref{S:Genericity-algebraic}).

Our framework is flexible enough to include the two cases of interest for computational algebraic geometry: when $X$ is $\R^n$ and $Y$ is a smooth algebraic set, the critical points of $\dist$ (as defined below) leads to the notion of Bottleneck Degree \cite{DiRocco_Eklund_Weinstein}; when $Y$ reduces to a point and $X$ is an algebraic set, they  lead to the notion of Euclidean Distance Degree \cite{EDD_Draisma}, see \cref{sec:related} below.

In our exposition we focus on the case when either $X$ or $Y$ (or both) are smooth real algebraic \emph{hypersurfaces}, and genericity is considered with respect to the defining polynomials. We also treat the case when $X$ is $\R^n$ and $Y$ is a finite set of points, which is relevant for applications.

\begin{example}\label{lowrank_approx:example} Given a point cloud in the space of tensors, what is the closest rank-one tensor to this point cloud? This problem can be formulated as finding the critical point with smallest value of $\dist$, where $Y=\{y_0, \ldots, y_m\}$ is the point cloud in the space of tensors $\R^{n_1}\otimes\cdots \otimes\R^{n_j}$ and $X$ is the set of rank-one tensors.
\end{example}
We expect that most of the results extend to the case of complete intersections, but we leave this to future work. Instead, we provide a technical toolset (most notably a multijet parametric transversality theorem for polynomials, see \cref{sec:multijet}) which the reader will easily be able to apply in their case of interest.

\subsection{Critical points of distance functions}
Let us first explain how to define ``critical points'' in our context. The key observation is that the function $\dist$ is Lipschitz. Following Clarke \cite{Clarke}, for such functions, one can define the notion of \emph{subdifferential}. This is done for a general locally Lipschitz function $f \colon X \to \R$ as follows (see \cref{def:subdifferential}). 

Denoting by $\Omega(f) \subseteq X$ the set of points where $f$ is differentiable (of full measure by Rademacher's Theorem), the \emph{subdifferential of $f$ at $x\in X$}, denoted by $\partial_x f$, is the convex body
\[
\partial_xf \coloneqq \mathrm{co}\left\{ \left. \lim_{x_k \to x, x_k \in \Omega(f)} D_{x_k} f \; \right| \; \textnormal{the limit exists}\right\} \subseteq (T_x X)^*.
\]
It is often practical to assume that $X\subseteq \R^n$ is a Riemannian manifold with the metric induced by the Euclidean metric of $\R^n$, so that one can identify $T_x X \cong (T_x X)^*$ and view the subdifferential as a convex body in $T_x X$.

\begin{definition}[Critical points, \cref{D:critical}]
A \emph{critical point} of $f$ is a point $x\in X$ such that the subdifferential $\partial_xf$ contains the zero vector.
\end{definition}

\begin{figure}
\centering

\begin{tikzpicture}
\node (x) at (3, 2) {};
\node (y1) at (1, 2) {};% y_0
\node (y2) at (5, 2) {};% y_1
\coordinate (A) at (1.5, -1);
\coordinate (B) at (5, 6);
\coordinate (p1) at ($(A)!(y1)!(B)$);
\coordinate (p2) at ($(A)!(y2)!(B)$);

\coordinate (C) at ($(x)!4cm!90:(A)$);

\draw[line width=2pt] (-1, 0) .. controls (0, 0) and (1, 1) .. (1, 2)
  .. controls (1, 3) and (0.5, 3) .. (0.5, 4)
  .. controls (0.5, 5) and (1.5, 6) .. (3, 6)
  .. controls (4.5, 6) and (5.5, 5) .. (5.5, 4)
  .. controls (5.5, 3) and (5, 3) .. (5, 2)
  .. controls (5, 1) and (6, 0) .. (7, 0) node[right]{$Y$};
\draw (x) circle (2);
\draw[line width=3ex, color=white!80!black] (p1) -- (p2);
\draw (A) -- (B) node[right]{$T_x X$};
\draw[line width=2pt] ([shift=(90:4cm)]C) node[right]{$X$} arc (90:195:4cm);
\draw[->, line width=1pt] (x) -- node[above]{$\frac{x - y_1}{\| x - y_1 \|}$} (y1);
\draw[->, line width=1pt] (x) -- node[below]{$\frac{x - y_0}{\| x - y_0\|}$} (y2);
\draw[dashed] (y1) -- (p1)
  (y2) -- (p2);
\node[above left] at (p2) {$\partial_x f$};

\filldraw (x) circle (2pt) node[above left]{$x$};
\filldraw (y1) circle (2pt) node[left]{$y_0$};
\filldraw (y2) circle (2pt) node[right]{$y_1$};
\end{tikzpicture}

\caption{Example of the subdifferential of $f = \dist$ at a critical point $x$. The subdifferential $\partial_x f$ is the interval between the projections of the vectors $\frac{x - y_0}{\| x - y_0 \|}$ and $\frac{x - y_1}{\| x - y_1 \|}$ onto $T_x X$.}
\label{F:distance-subdifferentialintro}
\end{figure}

The set of critical points of $f$ is denoted by $C(f)$. A regular point is a point that is not critical.  
The definitions of critical value and regular value are the usual ones. Minima and maxima are critical points; moreover, at a point of differentiability, one recovers the classical definitions. Going back to our functions of interest, if $Y \subseteq \R^n$ is a closed set and $x \in \R^n \setminus Y$, then the subdifferential of $\dd_Y$ at $x$ can be easily described, see \cref{propo:subgradient1}:
\[\label{eq:subintro}
\partial_x(\dd_Y) = \mathrm{co}\left\{
\frac{x - y}{\|x - y\|} \,\middle|\, y \in B(x, \dd_Y(x)) \cap Y \right\},
\]
where $B(x, \dd_Y(x))$ is the closed ball of radius $\dd_Y(x)$ centered in $x$.
If $X$ and $Y$ are in general position (see \cref{propo:subgradient2}) The subdifferential of $\dist$ at $x\in X$ is just the projection of $\partial_x(\dd_Y) $ on the tangent space $T_xX$, see Figure \ref{F:distance-subdifferentialintro}.

Various statements from classical analysis extend to the Lipschitz framework of distance functions. For instance, under the only assumption that $Y$ is definable, the set of critical values of $\dist$ is definable\footnote{Definability is needed, see \cref{remark:needed}.} and of measure zero (this is called the ``Sard property'', see \cref{sec:sard}). Another example of a classical result that extends to this framework is the Deformation Lemma, whose proof is based on Clarke's Implicit Function Theorem \cite{Clarke}. In order to state the lemma, we define for all $t \in \R$ the sublevel set
\[
X_Y^t \coloneqq \left\{ x\in X \,\bigg|\, \dd_Y(x)\leq t \right\}.
\]

\begin{proposition}[Deformation Lemma, \cref{lem:deformation}]
If the interval $[a,b] \subset \R$ contains no critical values of $\dist$, then the set $X_Y^b$ deformation retracts to the set $X_Y^a$.
\end{proposition}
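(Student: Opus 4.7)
My plan is to imitate the classical Morse-theoretic argument by building a descending pseudo-gradient-like vector field out of the Clarke subdifferential and then retracting $X_Y^b$ onto $X_Y^a$ along its flow. Writing $f\coloneqq\dist$ for brevity, the hypothesis that $[a,b]$ contains no critical values says $0\notin\partial_xf$ for every $x\in f^{-1}([a,b])$. By the Hahn--Banach separation theorem there is therefore, for each such $x$, a unit tangent vector $v_x\in T_xX$ with $\langle\xi,v_x\rangle<-\delta_x$ for every $\xi\in\partial_xf$ and some $\delta_x>0$. Since the Clarke subdifferential is upper semicontinuous as a set-valued map with nonempty compact convex values (a standard property following from its definition via limits of classical derivatives), this inequality persists on some neighborhood $U_x\subseteq X$ of $x$.

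Assuming compactness of $f^{-1}([a,b])$ --- which I would derive from properness of $\dd_Y$ restricted to $X_Y^b$ in the algebraic setting of interest --- I cover $f^{-1}([a,b])$ by finitely many such neighborhoods $U_{x_1},\ldots,U_{x_N}$ and, using a smooth partition of unity $\{\rho_i\}$ subordinate to them, assemble the tangent vector field
\[
V(x)\coloneqq\sum_{i=1}^N \rho_i(x)\,v_{x_i}.
\]
By convexity of each $\partial_x f$ this satisfies $\langle\xi,V(x)\rangle<-\delta$ uniformly on $f^{-1}([a,b])$ for some $\delta>0$ and every $\xi\in\partial_xf$. Multiplying $V$ by a cutoff supported in a small enlargement of $f^{-1}([a,b])$ yields a complete smooth vector field whose flow $\varphi\colon \R\times X\to X$ is globally defined.

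The descent of $f$ along the flow is obtained from the Lebourg chain rule for Lipschitz compositions: for any Lipschitz curve $\gamma$,
\[
\frac{d}{dt}f(\gamma(t))\in\bigl\{\langle\xi,\dot\gamma(t)\rangle\,\big|\,\xi\in\partial_{\gamma(t)}f\bigr\}\quad\text{for a.e. } t.
\]
Applied to $\gamma(t)=\varphi_t(x)$ this gives $\frac{d}{dt}f(\varphi_t(x))\leq -\delta$ as long as $\varphi_t(x)$ stays in $f^{-1}([a,b])$, so every trajectory starting in $X_Y^b$ reaches $X_Y^a$ in uniformly bounded time $\tau(x)\leq(b-a)/\delta$. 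Defining $H(s,x)\coloneqq \varphi_{s\,\tau(x)}(x)$ for $x\in X_Y^b\setminus X_Y^a$ and $H(s,x)\coloneqq x$ for $x\in X_Y^a$ produces the required (strong) deformation retract of $X_Y^b$ onto $X_Y^a$.

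The main technical obstacle is not the architecture of the argument, which is classical, but the verification in the Lipschitz setting of the two facts of Clarke's calculus on which it rests --- upper semicontinuity of $x\mapsto\partial_xf$ on $X$ and the Lebourg chain rule along Lipschitz trajectories --- together with ensuring that $V$ remains tangent to $X$ (automatic from choosing each $v_{x_i}\in T_{x_i}X$ and working intrinsically on $X$) and that sublevel sets of $f$ are compact so the finite covering step is available.
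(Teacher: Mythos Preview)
Your argument is correct in outline and is one of the standard routes to this deformation lemma in the Lipschitz setting. The paper itself does not give a self-contained proof: it simply invokes \cite[Proposition~1.2]{APS1997}, whose proof in turn rests on Clarke's Lipschitz implicit function theorem \cite{clarkeIFT}. That approach trivializes $f^{-1}([a,b])$ as a product $f^{-1}(a)\times[a,b]$ directly, rather than flowing along a pseudo-gradient. Your construction---separating $0$ from the compact convex set $\partial_xf$, patching local descent directions with a partition of unity, and integrating the resulting vector field---is the other classical route and is equally valid; it has the advantage of being closer in spirit to the smooth Morse-theoretic proof.

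Two points deserve a line of attention when you write this up. First, the paper's full statement (\cref{lem:deformation}) assumes $f$ is \emph{proper}; you correctly flag that compactness of $f^{-1}([a,b])$ is needed, and properness is exactly the hypothesis that supplies it, so state it. Second, the continuity of the hitting time $\tau(x)$ is where most of the care goes: you need that $t\mapsto f(\varphi_t(x))$ is \emph{strictly} decreasing (which your a.e.\ bound $\frac{d}{dt}f(\varphi_t(x))\le-\delta$ gives after integrating), and then continuity of $\tau$ follows from joint continuity of $(t,x)\mapsto f(\varphi_t(x))$ together with strict monotonicity in $t$. With those details filled in, your deformation $H$ is continuous and the argument is complete.
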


\subsection{Nondegenerate critical points} In the definable framework one can even prove that the cohomology of $X_Y^t$ changes in a controlled manner when passing a critical value (\cref{propo:criticalpass1}) but, as in the classical theory, in order to be able to say more, one needs to introduce the notion of \emph{nondegenerate} critical points. This is done using the theory of \emph{continuous selections} as follows.

Following \cite{APS1997}, we say that $f \colon X \to \R$ is a continuous selection of the finite list $(f_0, \ldots, f_m)$ of $\mathcal{C}^2$ functions  if $f$ is continuous and if for every $x \in X$ there exists $i \in \{0, \ldots, m\}$ such that $f(x) = f_i(x)$. In this case we write $f \in \mathrm{CS}(f_0, \ldots, f_m)$. For $f \in \mathrm{CS}(f_0, \ldots, f_m)$ and $i \in \{0, \ldots, m\}$, we denote by $S_i \subseteq X$ the set
\[
S_i \coloneqq \left\{ x\in X \,\bigg|\, f(x) = f_i(x) \right\}.
\]
Given $x \in X$, the \emph{effective index set} of $f\in \mathrm{CS}(f_0, \ldots, f_m)$ at $x\in X$ is defined as
\[
I(x)\coloneqq \left\{ i\in \{0, \ldots, m\} \,\bigg|\, x \in
\overline{\operatorname{int}(S_i)}
\right\}.
\]
Note that continuous selections are locally Lipschitz, so we can talk about their critical points using the language introduced above. We can think of these functions as ``piecewise $\mathcal{C}^2$ functions''.

Now comes the crucial definition. 

\begin{definition}[Nondegenerate critical points, \cref{D:nondegenerate-crit-pt}]
\label{def:nondegintro}
For $f \in \mathrm{CS}(f_0, \ldots, f_m)$, a critical point $x \in C(f)$ is said to be \emph{nondegenerate} if the following conditions are satisfied:
\begin{enumerate}[label=(\roman*)]
\item \label{eq:indexset} For every $i\in I(x)$ the set of differentials $\{D_x f_j \mid j\in I(x)\setminus \{i\}\}$ is linearly independent.
\item Denote by $(\lambda_i)_{i\in I(x)}$ the unique list of  numbers such that 
\[
\sum_{i \in I(x)} \lambda_i = 1
\quad \textrm{and} \quad
\sum_{i \in I(x)} \lambda_i D_x f_i = 0
\]
and by $\lambda f \colon X\to \R$ the function defined by $\lambda f(z) \coloneqq \sum_{i\in I(x)} \lambda_i f_i(z).$ Then the second differential of $\lambda f$ is nondegenerate on 
\[
V(x) \coloneqq \bigcap_{i\in I(x)} \ker(D_x f_i).
\]
\end{enumerate}
\end{definition}

When $f\in \mathrm{CS}(f_0)$, $f$ is $\mathcal{C}^2$ and the above definition reduces to the usual definition of nondegenerate critical point, in the sense of Morse theory. 

Compared to what happens with classical Morse functions, we see that a nondegenerate critical point of a continuous selection in general comes with two indices: \begin{enumerate}
\item [-]the number  $\lini(x) \coloneqq \#I(x) - 1$, which we call the \emph{piecewise linear index};
\item [-]the number $\quadi(x)$ of negative eigenvalues of the restriction of the second differential of $\lambda f$ to $V(x)$, which we call the \emph{quadratic index}.
\end{enumerate} 
Continuous selections have normal forms near their nondegenerate critical points, in a fashion similar to the Morse Lemma \cite[Lemma~2.2]{milnor1963morse}, see \cref{rmk:normalformdist1}. Therefore, as for the classical Morse theory, for a continuous selection $f$ the cohomology of the sublevel sets $\{f \leq t\}$  changes in a controlled way when passing a critical value with only nondegenerate critical points (see \cite{APS1997} and \cref{propo:passmin}), and this change can be bounded by the two numbers $\lini$ and $\quadi$ only. For instance, in our context, we have the following result.

\begin{proposition}[\cref{propo:criticalpass2dist}]
\label{propo:passintro}
If $\dist$ is a continuous selection at $x \in X$ and $x$ is a nondegenerate critical point for it such that $C(\dist)\cap (\dist)^{-1}(c) = \{x\}$, then for all $\epsilon > 0$ small enough,
\[
H^*(X^{c + \varepsilon}_Y, X^{c - \varepsilon}_Y) \cong \widetilde{H}^*(S^{\lini(x)+\quadi(x)}).
\]
\end{proposition}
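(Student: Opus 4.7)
The plan is to localize the computation near $x$ via the Deformation Lemma and excision, apply the Morse Lemma for continuous selections from \cite{APS1997} to obtain a local normal form for $\dist$, and then compute the relative cohomology of the resulting model by a Künneth-type argument.

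First, because $x$ is the unique critical point of $\dist$ in $(\dist)^{-1}(c)$, by shrinking $\varepsilon$ and choosing a small relatively compact open neighborhood $U$ of $x$ in $X$, I would apply the Deformation Lemma to the ``collar'' $(\dist)^{-1}([c-\varepsilon,c+\varepsilon])\setminus U$, which by construction contains no critical points. This yields a deformation retraction of $X^{c+\varepsilon}_Y\setminus U$ onto $X^{c-\varepsilon}_Y\setminus U$, and hence by excision
\[
H^*(X^{c+\varepsilon}_Y,\, X^{c-\varepsilon}_Y) \;\cong\; H^*\bigl(U\cap X^{c+\varepsilon}_Y,\, U\cap X^{c-\varepsilon}_Y\bigr),
\]
reducing everything to a local calculation near $x$.

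Next, since by hypothesis $\dist \in \mathrm{CS}(f_0,\ldots,f_m)$ on a neighborhood of $x$ and $x$ is nondegenerate in the sense of \cref{def:nondegintro}, I would invoke the Morse Lemma for continuous selections (see \cref{rmk:normalformdist1} and \cite{APS1997}). This produces $\mathcal{C}^1$ local coordinates $(u,v,w)$ at $x$, with $u\in \R^{\lini(x)}$, $v\in \R^{\quadi(x)}$ and $w\in \R^{\dim X-\lini(x)-\quadi(x)}$, in which
\[
\dist - c \;=\; \Phi(u) \;-\; \|v\|^2 \;+\; \|w\|^2,
\]
where $\Phi$ is a nondegenerate piecewise linear function built from the $\lini(x)+1$ affine pieces $(D_xf_i)_{i\in I(x)}$ having $0$ as a strict minimum (this is where condition (1) of \cref{def:nondegintro} enters, guaranteeing affine independence and hence that the PL factor is genuinely $\lini(x)$-dimensional and cone-like), while the quadratic form has signature $(\quadi(x),\dim X-\lini(x)-\quadi(x))$ coming from the Hessian of $\lambda f$ on $V(x)$ (condition (2)).

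Finally, for this standard model the local sublevel pair decomposes, up to homotopy, as a product of three pairs: the PL pair in the $u$-variables, which is readily checked to be homotopy equivalent to $(D^{\lini(x)}, S^{\lini(x)-1})$ by comparing the convex cone $\{\Phi\le\varepsilon\}$ with the empty set $\{\Phi\le -\varepsilon\}$; a classical Morse handle pair in the $v$-variables of index $\quadi(x)$; and a contractible pair in the $w$-variables. A Künneth argument then yields
\[
H^*(U\cap X^{c+\varepsilon}_Y,\, U\cap X^{c-\varepsilon}_Y) \;\cong\; \widetilde H^*\!\left(S^{\lini(x)+\quadi(x)}\right),
\]
as claimed. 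The main obstacle is really the production of the normal form rather than the cohomological bookkeeping: it is exactly at that step that both nondegeneracy conditions of \cref{def:nondegintro} are essential, and the rest of the argument is a direct transcription of the classical handle-attachment computation into the continuous-selection setting.
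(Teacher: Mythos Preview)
Your overall strategy (localize, apply the normal form from \cite{APS1997}, compute) is the same as the paper's, which simply invokes \cite[Section~4]{APS1997} and the topological equivalence of the min-type normal form with a quadratic form of index $\lini+\quadi$ (see \cref{rmk:normalformdist}). However, your handling of the piecewise linear factor is incorrect and, as written, the argument does not go through.

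The key point you miss is that $\dist$ is not an arbitrary continuous selection but a \emph{minimum}: near $x$ one has $\dist=\min_i f_i$ (cf.\ \cref{P:regular-value}). Consequently the PL model in the normal form is $\ell(u)=\min\{u_1,\ldots,u_\lini,-(u_1+\cdots+u_\lini)\}$, for which the origin is a strict \emph{maximum}, not a minimum: one has $\ell\le 0$ on $\R^\lini$ with equality only at $0$. Hence $\{\ell\le -\varepsilon\}$ is \emph{not} empty (it is the complement of a small neighbourhood of the origin), and $\{\ell\le\varepsilon\}=\R^\lini$ is not a ``convex cone''. The correct local PL pair is therefore $(\R^\lini,\R^\lini\setminus U_0)\simeq(D^\lini,\partial D^\lini)$, whose relative cohomology is indeed $\widetilde H^*(S^\lini)$; you reach the right answer, but via an incorrect description. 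For a general continuous selection the PL piece need not contribute a single sphere at all (this is exactly what the case analysis of \cite[Theorem~4.2]{APS1997} is about), so the min-type hypothesis is essential and must be invoked explicitly.

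A smaller issue: the sublevel set of a sum $\Phi(u)-\|v\|^2+\|w\|^2$ is not literally a product of sublevel sets, so the ``product of three pairs'' claim needs a deformation argument (or one simply appeals to \cite[Corollary~3.4]{JongenPallaschke1988}, as the paper does in \cref{rmk:normalformdist}, which gives directly that the min-type normal form is topologically a Morse function of index $\lini+\quadi$, after which the classical handle computation applies verbatim).
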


\subsection{Critical points and bottlenecks}
In the statement of \cref{propo:passintro}, the assumption that $\dist$ is a continuous selection is essential. Indeed, in general, distance functions might not fall in this class, even under seemingly nice assumptions, see \cref{example:nonCS}. We show, however, that this is generically the case in \cref{C:C2selection}.

Note that, due to the description \eqref{eq:subintro}, in order to satisfy the condition \ref{eq:indexset} from \cref{def:nondegintro} we need in particular that, at each critical point $x \in C(\dist)$, the function $m_Y \colon \R^n\to \R$, defined by
\[
m_Y(x) \coloneqq \# \Big( B(x, \dd_Y(x))\cap Y \Big),
\]
should be bounded by $n+1$.
In this direction, Yomdin proved in \cite[Proposition 3]{Yomdin1981} that this is true for a generic embedding $i \colon Y \to \R^n$, when $Y$ is a compact smooth manifold. Here we prove the following result, which is an analogue of Yomdin's result in the algebraic framework (but does not follow from it).
For $d\in \mathbb{N}$, we denote by $\Poly_d$ the set of polynomials on $\R^n$ of degree at most $d$.

\begin{theorem}[\cref{T:Yomdin}]
\label{T:Yomdinintro}
For generic $q \in \Poly_d$ with $d \geq 2$, writing $Y = Z(q)$,  and for every $x\in\R^n$, the set $B(x, \dd_Y(x)) \cap Y$ is a nondegenerate simplex with at most $n + 1$ points. In particular, $m_Y \leq n + 1.$
\end{theorem}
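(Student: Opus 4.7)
The plan is to set up a bad incidence variety in an augmented multi-jet space and reduce the statement to a codimension count by the multi-jet parametric transversality theorem for polynomials (\cref{sec:multijet}). Fix $k \geq 2$ and let $\tilde J_k \coloneqq J^1_k(\R^n, \R) \times \R^n$ denote the space of $1$-multi-jets at $k$ distinct points augmented by a center variable $x \in \R^n$; a point of $\tilde J_k$ is $(y_i, u_i, v_i, x)_{i=1}^k$, with $y_i, v_i, x \in \R^n$ and $u_i \in \R$. Inside $\tilde J_k$, define
\[
W_k \coloneqq \bigl\{ (y_i, u_i, v_i, x) \,\bigl|\, u_i = 0,\ x - y_i \in \R v_i,\ |x - y_1|^2 = |x - y_i|^2\ \text{for } i \geq 2 \bigr\}.
\]
Heuristically, $W_k$ records configurations in which the $y_i$ lie on the zero set of $q$ (via $u_i = 0$), the critical-point condition $\nabla q(y_i) \parallel x - y_i$ for the distance-from-$x$ function restricted to $Z(q)$ is satisfied at each $y_i$, and the $y_i$ are equidistant from $x$. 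Away from the degenerate strata $\{v_i = 0\} \cup \{y_i = x\}$ the defining equations are independent, so
\[
\operatorname{codim}_{\tilde J_k}(W_k) = k + (n-1)k + (k-1) = nk + k - 1.
\]
Let $W_k^{\mathrm{deg}} \subseteq W_k$ be the closed subvariety further imposing that $y_1, \ldots, y_k$ be affinely dependent; for $k \leq n + 1$ this is an extra codimension $n - k + 2$ in $\tilde J_k$.

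\textbf{Applying transversality.} Consider the smooth map
\[
\Psi \colon \Poly_d \times (\R^n)^k_{\mathrm{dist}} \times \R^n \to \tilde J_k, \qquad \Psi(q, y_1, \ldots, y_k, x) = \bigl(y_i, q(y_i), \nabla q(y_i), x\bigr).
\]
By the multi-jet parametric transversality theorem for polynomials of degree $\leq d$ with $d \geq 2$ (\cref{sec:multijet}), for $q$ in a residual subset of $\Poly_d$ the restricted map $\Psi_q \coloneqq \Psi(q, \cdot)$ is transverse to $W_k$ and to each stratum of $W_k^{\mathrm{deg}}$. Since the source of $\Psi_q$ has dimension $nk + n$, transversality yields
\[
\dim \Psi_q^{-1}(W_k) = n - k + 1 \qquad \text{and} \qquad \dim \Psi_q^{-1}(W_k^{\mathrm{deg}}) \leq -1.
\]
The first quantity is negative for $k \geq n + 2$, forcing $\Psi_q^{-1}(W_k) = \varnothing$; the second forces $\Psi_q^{-1}(W_k^{\mathrm{deg}}) = \varnothing$ for every $k \leq n + 1$. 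Since the closest points of $Y = Z(q)$ to any $x$ are critical points of the distance function from $x$ restricted to $Y$ and all sit at distance $\dd_Y(x)$, the first conclusion gives $m_Y(x) \leq n + 1$ and the second gives that these closest points are affinely independent. Intersecting over the finitely many relevant values of $k$ preserves genericity and yields the claim.

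\textbf{Main obstacle.} The substantive work is contained in the multi-jet parametric transversality theorem for polynomials. In the smooth category such a statement is standard (arbitrary bump-function perturbations), but in the polynomial setting $\Poly_d$ is finite-dimensional, and one must check that the variations $\delta q \in \Poly_d$ generate enough directions of $d\Psi$ to hit the normal bundle of $W_k$ at any point of $\Psi^{-1}(W_k)$. Effectively this requires an Hermite-type interpolation for polynomials of degree $d \geq 2$ at finitely many distinct points $y_1, \ldots, y_k \in \R^n$, uniformly controlled enough to produce transversality rather than just surjectivity at a fixed jet. This is precisely what the machinery of \cref{sec:multijet} is designed to deliver, and once it is in hand the theorem reduces to the codimension count performed above.
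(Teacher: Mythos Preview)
Your overall strategy is sound and quite close in spirit to the paper's, but the two arguments differ in an interesting way, and your appeal to \cref{sec:multijet} does not deliver what you claim at the stated degree $d\ge 2$.

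\textbf{Comparison of the two routes.} You work in the $1$-jet space and build the perpendicularity condition $x-y_i\parallel\nabla q(y_i)$ directly into $W_k$, and you then need a second stratified bad set $W_k^{\mathrm{deg}}$ to rule out affinely dependent configurations. The paper instead works with \emph{$0$-jets}: its map records only $(y_i,q(y_i),\|x-y_i\|^2)$, and its $W_k$ imposes merely $q(y_i)=0$ and equidistance. Transversality of the global map to this $W_k$ is then checked by hand (only value interpolation of $\dot q$ at $k+1$ distinct points is needed). The punchline is that the paper does \emph{not} introduce a separate degenerate locus: once $F_k(q,-)\pitchfork W_k$ holds for a given $q$, the paper evaluates what this transversality means at a tuple of \emph{closest} points, where the perpendicularity $x-y_i\parallel\nabla q(y_i)$ holds automatically; unwinding the condition forces the vectors $\{y_i-y_0\}$ to be linearly independent. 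So the paper extracts affine independence as a geometric consequence of transversality, rather than as a codimension count on a stratified bad set.

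\textbf{The gap in your degree bound.} Your step ``By the multi-jet parametric transversality theorem for polynomials of degree $\le d$ with $d\ge 2$ (\cref{sec:multijet})'' is not justified by what that section proves. \cref{L:submersion} asserts that the $r$-th multijet map is a \emph{submersion} once $d\ge d(n,k,r)$, and \cref{Ex:poly-degree} only gives $d(n,k,1)=3$ for $k\le n+1$; for the case $k=n+2$ (which you need to rule out $n+2$ closest points) the required bound on $\dim\Poly_d$ is $(n+2)(n+1)$, which already fails for $d=2$ and even for $d=3$ when $n\le 2$. What you actually need is the weaker statement that $\Psi$ is transverse to your specific $W_k$ (not a submersion onto all of $\tilde J_k$); this may well hold for $d\ge 2$ because the tangent space $TW_k$ absorbs the gradient directions $\dot V_i$, leaving essentially only value interpolation for $\dot q$, but you do not argue this and it is not what \cref{sec:multijet} provides. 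The paper sidesteps the issue entirely by never needing gradients in its jet map, so that the only polynomial freedom used is $0$-jet interpolation at at most $n+2$ points, which fits inside $\Poly_2$.
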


Now let $Y \subseteq \R^n$ be an algebraic set. Following \cite{DEEGH2023}, one says that a critical point $x \in C(\dd_Y)$ is a \emph{geometric $(k+1)$-bottleneck}, also called bottleneck of order $k$, if 
$x$ is a convex combination of $k+1$ affinely independent points in $B(x, \dd_Y(x))\cap Y$ and is not a convex combination of a smaller number of points in $B(x, \dd_Y(x))\cap Y$. In particular, a geometric $(k+1)$-bottleneck $x$ satisfies $m_Y(x) \ge k+1$.
Under the assumption that $\dd_Y$ is a continuous selection with only nondegenerate critical points (this is generically true, see \cref{T:genericity-nondegeneracy}), geometric $(k+1)$-bottlenecks are precisely critical points of $\dd_Y$ with piecewise linear index equal to $k$ (see \cref{def:nondegintro}) and an analogue of the cellular decomposition theorem is \cref{propo:criticalpass2dist}. 

Motivated by the connection between the order of geometric bottlenecks and the piecewise linear index,
we introduce the following general definition, which, in the nondegenerate case, partitions the critical points according to their \emph{piecewise linear} index. 

\begin{definition}[$k$-critical points, \cref{D:k-crit-pt}]
\label{def:kcrit}
Let $X\subseteq \R^n$ be a smooth manifold and $Y\subseteq \R^n$ be a closed definable set. For all $k \geq 0$, we define the \emph{$k$-critical points of $\dist$} as the critical points $x \in X$ of $\dist$ such that  $B(x, \dd_Y(x)) \cap Y=\{y_0, \ldots, y_k\}$.
We denote by $C_{k, *}(\dist)$ the set of $k$-critical points\footnote{At these critical points the quadratic index might not be defined, as they are not necessarily nondegenerate.} of $\dist$ not in $Y$. 

Note that if $X\cap Y\neq \emptyset$, then every point in this intersection is critical for $\dist$, in fact these are precisely the global minima.

In the nondegenerate case (so that the quadratic and piecewise linear indices are defined), $k$-critical points have piecewise linear index $k$. We denote by $C_{\lini, \quadi}(\dist) \subseteq C_{\lini, *}(\dist)$ the set of nondegenerate critical points of $\dist$ with piecewise linear index $\lini$ and quadratic index $\quadi$.
For instance, a geometric $(k+1)$-bottleneck that is nondegenerate with quadratic index $\quadi$ is contained in $C_{\lini, \quadi}(\dd_Y)$.
\end{definition}

In the generic algebraic case, by \cref{T:Yomdinintro}, every critical point is a $k$-critical point for some $k \in \{0, \ldots, n\}$.
In fact, a stronger result is true. The next theorem collects various statements from \cref{S:Genericity-algebraic} and shows that, in the generic case, all of the ideas that we have introduced so far combine to give a Morse theory for the function $\dist$ in the case $X,Y$ are smooth algebraic hypersurfaces.

\begin{theorem}
\label{T:finite-crit-ptintro}
Let $p \in \Poly_{d_1}$ and $q\in \Poly_{d_2}$ be generic polynomials, and let $X = Z(p)$ and $Y = Z(q)$ be their zero sets. Then, depending on the values of $d_1=\mathrm{deg}(X)$ and $d_2=\mathrm{deg}(Y)$ the following is true:
\begin{itemize}
\item[-] if $d_1 \geq 3$ and $d_2 \geq 4$, then the function $\dist$ is a continuous selection near each of its critical points (\cref{C:C2selection});
\item[-] if $d_1, d_2 \geq 4$, then every critical point of $\dist$ not in $Y$ is nondegenerate (\cref{T:genericity-nondegeneracy}); in particular \cref{propo:passintro} can be applied;
\item[-] if $d_1, d_2 \geq 3$, then for every $k\in \{1, \ldots, n\}$, the number of $\lini$-critical points of $\dist$ not in $Y$ can be bounded by
\be\label{eq:boundcriticalintro}\#C_{\lini, *}(\dist)\leq c(k,n)\mathrm{deg}(X)^n\mathrm{deg}(Y)^{n(k+1)},\ee
for some $c(k,n)>0$ depending on $n$ and $k$ only (\cref{T:finite-crit-pt}).
\end{itemize}
\end{theorem}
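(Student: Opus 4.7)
The theorem consolidates three genericity statements; I sketch the strategy for each.

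For the first bullet, I would combine \cref{T:Yomdinintro} (which guarantees that for generic $q$ the set $B(x,\dd_Y(x))\cap Y$ is an affinely independent simplex with at most $n+1$ vertices at every $x$) with an application of the implicit function theorem. The aim is to show that near a critical point $x_0 \in X$, each nearest point $y_i(x_0)$ extends to a smooth local selection $y_i(x)$, so that $\dist$ agrees locally with $\min_i \|x - y_i(x)\|$, a continuous selection of $\mathcal{C}^2$ functions. The implicit function theorem applies to the nearest-point system $q(y)=0$, $x - y = \mu\, \nabla q(y)$ as long as its Jacobian is nondegenerate; this is a second-order condition on $q$ ensured by genericity once $d_2 \geq 4$, combined with transversality of $X$ to the stratified medial axis $\overline{M_Y}$ (generic for $d_1 \geq 3$).

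For the second bullet, I would observe that the two defining conditions of \cref{def:nondegintro} translate into the nonvanishing of explicit polynomial expressions in the $2$-jets of $p$ and $q$ at a critical configuration $(x, y_0, \ldots, y_k)$: condition~(1) becomes nonvanishing of a Gram-type determinant built from the first derivatives of $q$ at the $y_i$, and condition~(2) becomes nonvanishing of the determinant of a restricted Hessian assembled from the second derivatives of $p$ and $q$. The corresponding bad set is a proper algebraic subvariety of the appropriate multijet bundle. Applying the multijet parametric transversality theorem of \cref{sec:multijet} to the jet evaluation map from $\Poly_{d_1}\times\Poly_{d_2}$ reduces the argument to a codimension count; the hypothesis $d_1, d_2 \geq 4$ is exactly what is needed so that $2$-jets can be independently prescribed at $x$ and at each of the $\leq n+1$ nearest points, making the evaluation map a submersion, so generic $(p,q)$ avoid the bad set.

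For the third bullet, the $k$-critical points are the $x$-projections of the real solutions of the polynomial system
\begin{gather*}
p(x)=0, \quad q(y_i)=0, \quad x - y_i = \mu_i\, \nabla q(y_i) \quad (0\leq i\leq k), \\
\|x-y_i\|^2 = \|x-y_0\|^2 \quad (1\leq i\leq k), \\
\sum_{i=0}^k \lambda_i = 1, \quad x - \sum_{i=0}^k \lambda_i y_i = \nu\, \nabla p(x).
\end{gather*}
The first bullet (together with a Bertini-type argument) implies that for generic $(p,q)$ this system defines a zero-dimensional complex variety. I would then apply a multihomogeneous Bezout bound: the $x$-block controlled by $p$ and $\nabla p$ contributes $O(d_1^n)$, each $y_i$-block controlled by $q$ and $\nabla q$ contributes $O(d_2^n)$, and the auxiliary equations in $\lambda_i,\mu_i,\nu$ together with the distance equalities contribute only a combinatorial factor absorbed into $c(k,n)$, yielding $\#C_{k,*}(\dist)\leq c(k,n)\, d_1^n d_2^{n(k+1)}$. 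The main obstacle is the Bezout bookkeeping in the presence of the multipliers and of the symmetric group $\mathfrak{S}_{k+1}$ permuting the $y_i$: keeping the exponents at exactly $d_1^n$ and $d_2^{n(k+1)}$ requires either explicit elimination of the multipliers or a careful use of the multigraded structure so that no spurious degree factors are picked up.
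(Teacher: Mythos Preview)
Your strategy for the first two bullets matches the paper's: the continuous-selection claim is obtained exactly by showing that critical points are regular values of the normal exponential map of $Y$ (your implicit-function-theorem condition on the nearest-point system is equivalent to this), and nondegeneracy is handled by encoding the failure of conditions (1) and (2) of \cref{def:nondegintro} as a bad locus in a $2$-jet space and applying the multijet parametric transversality theorem.

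For the third bullet your route diverges from the paper's. Finiteness in the paper is not deduced from the first bullet; it comes from a direct codimension count. One writes down the algebraic incidence variety $C_k(p,q)$ (essentially your system, with the normality condition phrased as a rank-one condition rather than via multipliers $\mu_i$), embeds it via a parametrized $1$-jet map $F^1$ into a multijet space, and checks that the relevant locus $W^1$ has codimension exactly equal to $\dim(\R^n\times(\R^{n(k+1)}\setminus\Delta)\times\R^L)$; parametric transversality then gives zero-dimensionality for generic $(p,q)$. Your appeal to ``the first bullet together with a Bertini-type argument'' does not supply this step.

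For the numerical bound, the paper does \emph{not} use a multihomogeneous B\'ezout count. Instead it bounds $\#C_{k,*}(\dist)\leq b_0(\Lambda_k)\leq b(\Lambda_k)$ and invokes the multi-degree Betti-number estimate of Basu (sum of Betti numbers of a real algebraic set in $\R^{n_1+\cdots+n_p}$ defined by polynomials of multidegree $(d_1,\ldots,d_p)$ is $O(1)^{\sum n_i} d_1^{n_1}\cdots d_p^{n_p}$), applied with blocks $(x,\overline y,\overline\lambda,\mu)$ of dimensions $(n,n(k+1),k+1,1)$ and degrees $(d_1,d_2,1,1)$. This immediately produces $c(k,n)\,d_1^n d_2^{n(k+1)}$ with no bookkeeping, and it does not require the \emph{complex} system to be zero-dimensional. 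Your B\'ezout approach could in principle land on the same exponent, but as you yourself note, tracking the multipliers and the mixed degrees so that no extra factors of $d_1$ or $d_2$ creep in is delicate; the paper's real Betti-number bound sidesteps this entirely.
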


\begin{remark}
In the case of $(X, Y) = (\R^n, Y)$, which is not covered by the above theorem,  the result still applies, see \cref{R:X-is-Rn-case}. The fact that, for every $k \in \{1, \ldots, n\}$, there are generically finitely many $k$-critical points not in $Y$ follows from \cite[Theorem 4.14]{DEEGH2023} (which holds for non-quadratic complete intersections). However, \cite[Theorem 4.14]{DEEGH2023} does not prove that this number is zero for $k\ge n+1$ (i.e.\ it doesn't rule out the possibility of critical points with $m_Y$ arbitrarily large), which is instead a consequence of \cref{T:Yomdinintro}. 
\end{remark}

\begin{remark}[A duality formula]
The partition of nondegenerate critical points according to the \emph{sum} of their piecewise linear and quadratic index provides an interesting duality formula (see \cref{coro:duality}), that for the generic pair $(X, Y)$, if $X$ and $Y$ are compact, reads as
\[
\chi(Y)+\sum_{\lini, \quadi\geq 0}(-1)^{\lini + \quadi}\#C_{\lini, \quadi}(X, Y)=\chi(X)+\sum_{\lini, \quadi\geq 0}(-1)^{\lini + \quadi}\#C_{\lini, \quadi}(Y, X).
\]
For example, when $Y=\{y\}$ with $y \notin X$, this formula gives back the Euler Characteristic of $X$ as the alternating sum of the number of critical points of quadratic index $\quadi$.
\end{remark}

\subsection{Related works on Morse theory of distance functions}\label{sec:related}
Prior to our work there have been several contributions to build a framework for the Morse theory of distance functions: most notably \cite{cheeger}, that deals with distance-from-a-point functions on Riemannian manifolds, and \cite{bobrowski2014distance}, which treats the case of point clouds. 

We note that, in \cite{bobrowski2014distance}, the piecewise linear index $k$ is identified as an analogue of the classical Morse index in the case of distance functions from a finite set of points $Y = \{y_0, \ldots, y_m\}$, and in \cite[Remark 4.2]{DEEGH2023} it is compared to the order of the geometric bottlenecks in the context of more general distance functions (with a shift in index). However, as we explained above, in the general case the piecewise linear index $k$ is not the correct analogue of the classical Morse index. It is true that, for the distance from a finite set of points, the piecewise linear index correctly describes the changes in topology, but this is because the quadratic index in this case is always zero. In general, as demonstrated by \cref{propo:passintro}, changes in topology take into account both the piecewise linear index and the quadratic index.

More recently, the authors of \cite{song2023generalized} developed the Morse theory of signed distance functions from a hypersurface in Euclidean space. Via an argument based on transversality theory, they prove that, for a generic embedding of a smooth compact surface in $\R^3$, the associated signed distance function has finitely many critical points that are all nondegenerate. 
The Morse theory for distance functions we present in Section \ref{S:Morse} is similar to the treatment in \cite[Sect.~3]{song2023generalized}. The main difference between the two approaches is that the authors of \cite{song2023generalized} build on the theory of Min-type functions \cite{gershkovich1997morse} which, like continuous selections, admit normal forms at nondegenerate critical points.
For applications to Euclidean distance functions, the generalized Morse theory built on the theory of Min-type functions is equivalent to that built on continuous selections, as we observe in \cref{rmk:normalformdist}.

\subsection{The Bottleneck Degree, the Euclidean Distance Degree and related notions}
Let us go back to the study of geometric $(k+1)$-bottlenecks in details. As we have seen, they are in particular critical points of $\dd_Y$ with at least $k+1$ closest points in $Y$; in the nondegenerate case, this means critical points with piecewise linear index $\lini$. When $X$ is $\R^n$ and $Y$ is an algebraic hypersurface, the above result can still be applied (see \cref{R:X-is-Rn-case} and Remark $1.4.4$), and the  bound \eqref{eq:boundcriticalintro} becomes
\[
\#C_{\lini, *}(\dd_Y) \leq c(k,n)(\deg(Y))^{n (k + 1)}.
\]
When $k = 1$, geometric $2$-bottlenecks are simply called ``bottlenecks''. These bottlenecks are easier to study, because of the structure of their equations (roughly speaking, they do not involve quantifier elimination). The \emph{Bottleneck Degree} of a smooth variety $Y$ defined over $\R$, denoted by $\mathrm{BND}(Y)$, was defined in \cite{DiRocco_Eklund_Weinstein} as the number of complex solutions of this system of equations for generic $Y$.
The motivation for the study in \cite{DiRocco_Eklund_Weinstein} was to control the number of bottlenecks of (the real part of) a real variety $Y$, and this can be done passing to the complexification and using the bound 
\[\#C_{1,*}(\dd_Y) \leq \mathrm{BND}(Y).\] In \cite{DiRocco_Eklund_Weinstein}, the authors compute various bottleneck degrees (planar and space curves, space surfaces, and  threefolds) and deduce a bound that, for these examples, is of the form $\#C_{1,*}(\dd_Y) \leq \widetilde{c}(1, n) d^{2n}.$ In this sense, our bound \eqref{eq:boundcriticalintro} generalizes this for all dimensions and for bottlenecks of all orders.

A similar discussion holds for the notion of \emph{Euclidean Distance Degree} of a smooth real algebraic variety $X$, denoted by $\mathrm{EDD}(X)$. This was defined in \cite{EDD_Draisma} as the number of complex solutions of the equations describing the critical points of the distance function from a generic point $y \in \R^n$.  

If one is only interested in the real solutions to the critical points equations, using our language, one could define the following notion. For generic pairs $(X, Y)$ of real algebraic sets in $\R^n$, we define the  \emph{Real Euclidean Distance Degree of $X$ relative to $Y$} as the number
\[
\mathrm{EDD}_\R(X, Y) \coloneqq \#C(\dist).
\]
In the special cases $(X, Y) = (\R^n, Y)$ and $(X, Y) = (X, \{y\})$, one could refine this definition and obtain the quantities
\[
\mathrm{BND}_\R(Y) \coloneqq \mathrm{EDD}_\R(\R^n,Y) \quad \textrm{and} \quad \mathrm{EDD}_\R(X) \coloneqq \mathrm{EDD}_\R(X, \{y\}).
\]
In the generic case, as we have seen, both these numbers are actually the number of critical points of an appropriate function (the function $\dist$) and can be bounded by \eqref{eq:boundcriticalintro}. 

Note that, unlike analogue complex notions, this number depends on the pair $(X, Y)$.
It is clear from this discussion, for instance, that $\mathrm{EDD}_\mathbb{R}(X)\leq \mathrm{EDD}(X).$
However, we leave the study of this notion of ``relative EDD'' from the complex point of view to future investigations. The definition of the complex solutions of the system defining our critical points requires some care, because convexity is used, but the constructions from \cref{S:Genericity-algebraic} also work over the complex numbers. 

A natural question for the experts is: after it is defined, how to compute the (complex) Euclidean Distance Degree of $X$ relative to $Y$ using polar classes?

\subsection{Tools for genericity}
Our proof uses \emph{real techniques} as opposed to, for example, the techniques from \cite{DiRocco_Eklund_Weinstein}, which come from complex algebraic geometry and use the notion of polar classes. However, the technical results from the current paper (in particular, those from \cref{sec:multijet}) extend easily to the complex world, with the same proofs. In fact, we propose these techniques as easy-to-use substitute tools to verify that codimensions are the expected ones in the generic case. For example, the proof of finiteness of the number of $(k+1)$-bottlenecks for non-quadratic complete intersections from  \cite[Theorem 4.14]{DEEGH2023} uses the Alexander-Hirschowitz Theorem, that here is substituted with the parametric multijet theorem (\cref{L:submersion}).

\subsection*{Acknowledgements}
The authors would like to thank the anonymous referees whose constructive comments contributed to improve the paper.

\section{Morse theory for distance functions}
\label{S:Morse}

Let $Y\subseteq \R^n$ be a closed definable subset and $X\subseteq \R^n$ a smooth manifold. In this section we will explain how to construct a version of Morse theory for the function 
\[\dist \colon X \to \R,
\]
where $\dd_Y(z) \coloneqq \min_{y\in Y} \| y - z \|$ and $\dist$ denotes the restriction of $\dd_Y$ to $X$. 
Except for some special cases (e.g.\ when $Y=\{y\}$ is a point which is not in $X$), the function $\dist$ is not differentiable and not convex. To formulate basic statements about the change of the topology of its  sublevel sets, one needs to introduce first appropriate notions of critical points and critical values. 
This will be done, first for the function $\dd_Y$ and then for $\dist$, following a  general theory for locally Lipschitz functions. 

\subsection{Critical points}

An appropriate notion of critical points for $\dd_Y$ is introduced as follows. First we observe that $\dd_{Y}$ is Lipschitz, and therefore differentiable almost everywhere by Rademacher's Theorem. Following Clarke \cite{Clarke}, one can therefore define the \emph{subdifferential} of $\dd_{Y}$ at a point $x\in \R^n$. 

\begin{definition}[Subdifferential]\label{def:subdifferential}
Let $(X, g)$ be a Riemannian manifold and  $f \colon X\to \R$
be a locally Lipschitz function. (In the framework we have in mind $X\subseteq \R^n$ is a submanifold with the induced Riemannian metric from the ambient Euclidean space.) We denote by $\Omega(f) \subseteq X$ the set of differentiability points of $f$ (of full measure by Rademacher's Theorem). The \emph{subdifferential of $f$ at $x\in X$}, denoted by $\partial_x f$, is the convex body defined by
\[
\partial_xf \coloneqq \mathrm{co}\left\{ \lim_{x_k \to x, x_k \in \Omega(f)} \nabla f(x_k) \; \middle| \; \textnormal{the limit exists}\right\} \subseteq T_x X.
\]
Note that at a strictly differentiable point, the subdifferential reduces to a singleton $\partial_x f = \{\nabla f(x)\}$.
\end{definition}

\begin{definition}[Critical points]
\label{D:critical}
We say that $x \in X$ is a \emph{critical point} of a locally Lipschitz function $f \colon X\to \R$ if $0$ belongs to $\partial_x f$. We denote by $C(f) \subseteq X$ the set of critical points of $f$. 

A point $x\in X$ is said to be a \emph{regular point} of $f$ if it is not critical; a \emph{critical value} of $f$ is a real number $c \in \R$ such that $f^{-1}(c) \cap C(f) \neq \emptyset$ and a \emph{regular value} of $f$ is a real number $c \in \R$ which is not critical.
\end{definition}

\begin{remark}
For example, local maxima and local minima are critical points of locally Lipschitz functions \cite{Rockafellar}.
\end{remark}

\begin{definition}[Medial axis]\label{def:medial}
Let $Y \subseteq \R^n$ be a closed set. We introduce the function $m_{Y} \colon \R^n \to \R \cup \{\infty\}$, defined for $z \in \R^n$ by
\[
m_Y(z) \coloneqq \#\left(B(z, \dd_Y(z)) \cap Y\right),
\]
where $B(z, \dd_Y(z))$ denotes the closed ball of radius $\dd_Y(x)$ centered in $x$, and the set $M_Y \subset \R^n$, called the \emph{medial axis} of $Y$, defined by
\[
M_{Y} \coloneqq \left\{z\in \R^n\,\bigg|\, m_Y(z)\geq 2\right\}.
\]
\end{definition}

\begin{remark}
\cite[Prop.\ 3]{Yomdin1981} proves that $m_Y$ is finite if $Y = i(W)$ is the image of a  compact smooth manifold $W$ by a generic embedding $i \colon M \to \R^n$. An analogue result in the algebraic setting is shown by \cref{T:Yomdin}. 
\end{remark}

\begin{lemma}
\label{L:useful}
Let $Y \subseteq \R^n$ be a closed definable set. The set $M_Y$ is also definable, of codimension at least one, and the function $\dd_Y$ is continuously differentiable on $\R^n \setminus (Y \cup \overline {M_Y})$.
\end{lemma}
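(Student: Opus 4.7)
The plan is to prove the three claims in sequence.

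First, for definability of $M_Y$, I would unpack the definition: $z \in M_Y$ iff there exist $y_1 \neq y_2$ in $Y$ with $\|z-y_1\| = \|z-y_2\| = \dd_Y(z)$. Since $Y$ is definable and the distance function $\dd_Y$ is definable (as an infimum over a definable family, by quantifier elimination in the o-minimal structure), the set $M_Y$ is obtained from these by finitely many Boolean operations and projections, hence is definable.

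Second, for the codimension bound, I would observe first that $M_Y \cap Y = \varnothing$, because any $y \in Y$ satisfies $\dd_Y(y) = 0$ and $B(y,0) \cap Y = \{y\}$, so $m_Y(y) = 1$. Next, for $z \in M_Y$, \cref{propo:subgradient1} gives
\[
\partial_z \dd_Y = \mathrm{co}\left\{ \frac{z-y}{\|z-y\|} \;\Big|\; y \in B(z,\dd_Y(z))\cap Y\right\},
\]
and since $\dd_Y(z) > 0$ and two distinct points $y_1 \neq y_2$ at the same positive distance from $z$ produce two distinct unit vectors, $\partial_z \dd_Y$ is not a singleton. Therefore $\dd_Y$ is not differentiable at $z$, and $M_Y$ is contained in the exceptional set of Rademacher's theorem for $\dd_Y$, which has Lebesgue measure zero. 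A definable subset of $\R^n$ of measure zero has dimension at most $n-1$ (by the cell decomposition theorem), so $M_Y$ has codimension at least one. The same bound then holds for $\overline{M_Y}$.

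Third, for $C^1$ regularity on $U \coloneqq \R^n \setminus (Y \cup \overline{M_Y})$, I would argue as follows. For $z \in U$, we have $\dd_Y(z) > 0$ and a neighborhood of $z$ lies in $U$, so every nearby point has a unique nearest point in $Y$. Define $\pi \colon U \to Y$ by sending $z$ to its unique nearest point. The map $\pi$ is continuous: if $z_n \to z$ in $U$, the sequence $\pi(z_n)$ is bounded (since $\dd_Y$ is continuous), so has cluster points, each of which must lie in $Y$ (closed) and realize $\dd_Y(z)$, forcing it to equal $\pi(z)$ by uniqueness. By \cref{propo:subgradient1}, $\partial_z \dd_Y = \{(z-\pi(z))/\|z-\pi(z)\|\}$ is a singleton depending continuously on $z \in U$; a locally Lipschitz function whose Clarke subdifferential is a continuous singleton on an open set is $C^1$ there with that singleton as its gradient. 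Alternatively, one can verify differentiability directly with a sandwich estimate: using $\pi(z)$ as a trial point bounds $\dd_Y(z+tv) - \dd_Y(z)$ from above by $\|z+tv-\pi(z)\| - \|z-\pi(z)\|$, and using $\pi(z+tv)$ bounds it from below; both bounds have the same derivative $\langle v, (z-\pi(z))/\|z-\pi(z)\|\rangle$ at $t=0$ thanks to continuity of $\pi$.

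The main technical subtlety is the codimension statement, which hinges on the interplay between Rademacher's theorem and the o-minimal dimension of a definable set of measure zero; everything else is a direct application of \cref{propo:subgradient1} and classical facts about continuous selections of nearest points.
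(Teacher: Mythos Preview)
Your argument is correct and substantially more detailed than the paper's own proof, which consists of three sentences: definability is declared ``obvious'', the codimension bound ``follows from the strict convexity of the norm'', and the $C^1$ statement is delegated to \cite[Theorem~4.8(5)]{Federer1959}. Where the paper cites or hints, you give a self-contained argument.

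For codimension, the paper's appeal to strict convexity is presumably a compressed form of what you wrote: strict convexity of the Euclidean ball guarantees that two distinct nearest points $y_1\neq y_2$ yield two distinct unit vectors in $\partial_z\dd_Y$, forcing non-differentiability, so that $M_Y$ sits inside the Rademacher exceptional set and hence, by o-minimal cell decomposition, has dimension at most $n-1$. One small caution: the implication ``$\partial_z f$ not a singleton $\Rightarrow$ $f$ not differentiable at $z$'' is false for general locally Lipschitz $f$ (e.g.\ $x^2\sin(1/x)$ at the origin). It \emph{is} true for $\dd_Y$ on $\R^n\setminus Y$, for instance because $\tfrac{1}{2}\|x\|^2-\tfrac{1}{2}\dd_Y(x)^2$ is convex (this is established in the proof of \cref{propo:subgradient1}), so $\dd_Y$ is semi-concave there; the paper's \cref{def:subdifferential} asserts the implication without this qualification, so you are entitled to use it, but it is worth being aware of the subtlety.

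For the $C^1$ claim, your argument via continuity of the nearest-point projection and the fact that a Lipschitz function with continuous singleton Clarke subdifferential is $C^1$ is essentially the content of the Federer reference, reproved directly. This trades an external citation for a short paragraph; both routes are legitimate, and yours has the advantage of keeping the paper self-contained.
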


\begin{proof}The fact that $M_Y$ is definable is obvious. The fact that it has codimension at least one follows from the strict convexity of the norm and the fact that $\dd_Y$ is continuously differentiable on $\R^n \setminus (Y \cup \overline {M_Y}))$ is the content of \cite[Theorem 4.8(5)]{Federer1959}.
\end{proof}

\begin{example}[Critical points of the distance function from a finite set]
\label{ex:finiteY}
Let $Y=\{y_0,\ldots ,y_m \}$ be a finite set of points in $\mathbb{R}^n$, and let $X=\mathbb{R}^n$.

The work of Bobrowski and
Adler \cite{bobrowski2014distance} explains the Morse theory for the function  $\dd_Y$; it turns out that this is a special case of the Morse theory from this paper. 

To each $y_i\in Y$ one can associate the \emph{Voronoi cell} $V(y_i) \coloneqq \{ x \in \mathbb{R}^n \mid \| x-y_i \| \leq \| x-y_j \|, \forall y_j \in Y \}$. Voronoi cells are convex polyhedra, and any two different Voronoi cells have disjoint interiors. The collection of Voronoi cells and their faces constitute a polyhedral complex \cite{ziegler2012lectures} called the \emph{Voronoi diagram} of $Y$, see e.g.\ 
\cite{boissonnat2018geometric}, which covers the whole space $\mathbb{R}^n$.

In the interior of a Voronoi cell $V(y_i)$, the distance function $\dd_Y$  coincides with $\dd_{\{y_i\}}$ and its only critical point is $y_i$. The medial axis $M_Y$ is union of the boundaries of all Voronoi cells, and it contains all remaining critical points of $\dd_Y$ (cf.\ Lemma \ref{L:useful}). 
If no subset of $n+2$ points of $Y$ lie on a common hypersphere, then for each $x\in \mathbb{R}^n$ the value $m_Y(x)$ coincides with $n+1$ minus the lowest dimension of a Voronoi face containing $x$.

A set $Y=\{y_0,\ldots ,y_m \}$ in $\mathbb{R}^n$ is in \emph{general position} if every subset of up to $n+1$ points of $Y$ is affinely independent.
If $Y$ is in general position, then one can find the critical points of $\dd_Y$ via the following procedure: for each subset $Y'$ of $\lini+1$ points in $Y$, with $\lini \le n$, let $x$ denote the center of the unique $(\lini-1)$-sphere containing $Y'$ and check whether
\begin{enumerate}[label=(\roman*), align=left]
\item $x \in \mathrm{co} (Y')$, and
\item there is no point $y\in Y\setminus Y'$ such that $\| x-y\| < \dd_{Y'} (x)$.
\end{enumerate}
The point $x$ is critical if, and only if, the conditions (i) and (ii) are verified. Under the general position assumption on $Y$, the procedure we described gives an upper bound of $\sum_{\lini = 0}^{n} \binom{m}{\lini + 1}$ on the number of critical points of $\dd_Y$.

(We continue the discussion of this example below, after we have introduced the notion of nondegenerate critical points, see \cref{example:points2}.)
\end{example}

\subsubsection{Subdifferentials of distance functions via Danskin's theorem}

We now describe the Clarke subdifferential of the distance function in a more direct way. The key point is that the squared distance from a closed set can be written as the minimum of a family of smooth functions. This allows one to apply Danskin's theorem and obtain an explicit formula for the subdifferential, both in the ambient space and after restriction to a smooth submanifold.

We first recall the version of Danskin's theorem that we need.

\begin{proposition}[Danskin's theorem]\label{P:danskin}
Let $W \subseteq \R^m$ be compact, and let $\phi \colon \R^n \times W \to \R$ be continuous. Assume that for every $w \in W$ the function $\phi(-,w)\colon \R^n \to \R$ is differentiable, and that for every $x \in \R^n$ the map
\[
w \mapsto \nabla_x \phi(x,w)
\]
is continuous on $W$. Define
\[
f(x) \coloneqq \max_{w \in W} \phi(x,w).
\]
Then
\[
\partial_x f = \mathrm{co}\left\{ \nabla_x \phi(x,z) \;\middle|\; z \in Z(x)\right\},
\]
where
\[
W(x) \coloneqq \left\{ w \in W \;\middle|\; \phi(x,w)=\max_{w \in W}\phi(x,w)\right\}.
\]
\end{proposition}

\begin{proof}
This is \cite[Prop.~B.22(b)]{bertsekas1999nonlinear}.
\end{proof}

We begin with the ambient case.

\begin{proposition}\label{P:squared-distance-subgradient}
Let $Y \subseteq \R^n$ be closed, and define
\[
\eta_Y(x)\coloneqq \frac{1}{2}\dd_Y(x)^2.
\]
Then, for every $x \in \R^n$,
\[
\partial_x(\eta_Y)= \mathrm{co}\left\{ x-y \;\middle|\; y \in B(x,\dd_Y(x)) \cap Y \right\}.
\]
\end{proposition}

\begin{proof}
Fix $x_0 \in \R^n$. Since $Y$ is closed, for every $x \in \R^n$ the set $B(x,\dd_Y(x)) \cap Y$ is nonempty and compact.

We claim that there exist a neighborhood $U$ of $x_0$ and $R>0$ such that, for every $x \in U$,
\[
B(x,\dd_Y(x)) \cap Y \subseteq B(0,R).
\]
Indeed, choose $U$ bounded. Since $\dd_Y$ is $1$-Lipschitz, it is bounded on $U$, and therefore
\[
R \coloneqq \sup_{x \in U}\|x\|+\sup_{x \in U}\dd_Y(x)
\]
is finite. If $x \in U$ and $y \in B(x,\dd_Y(x)) \cap Y$, then
\[
\|y\| \leq \|y-x\|+\|x\|=\dd_Y(x)+\|x\| \leq R.
\]
This proves the claim.

Set
\[
W \coloneqq Y \cap B(0,R),
\qquad
\phi(x,y)\coloneqq \langle x,y\rangle-\frac{\|x\|^2}{2}-\frac{\|y\|^2}{2}.
\]
For $x \in U$, one has
\[
\eta_Y(x)=\min_{y \in Z}\frac{1}{2}\|x-y\|^2,
\]
or equivalently
\[
-\eta_Y(x)=\max_{y \in Z}\phi(x,y).
\]
Moreover, $W$ is compact, $\phi$ is continuous, for every $y \in W$ the function $\phi(-,y)$ is smooth, and
\[
\nabla_x\phi(x,y)=y-x
\]
depends continuously on $y$. Hence \cref{P:danskin} applies and yields
\[
\partial_x(-\eta_Y)=\mathrm{co}\left\{ y-x \;\middle|\; y \in W(x)\right\},
\]
where, as above,
\[
W(x)=\left\{ y \in W \;\middle|\; \phi(x,y)=\max_{w \in W}\phi(x,w)\right\}.
\]
By construction, the maximizing points are exactly the nearest points to $x$ in $Y$, that is,
\[
W(x)=B(x,\dd_Y(x)) \cap Y.
\]
Therefore
\[
\partial_x(-\eta_Y)=\mathrm{co}\left\{ y-x \;\middle|\; y \in B(x,\dd_Y(x)) \cap Y \right\},
\]
and multiplying by $-1$ gives
\[
\partial_x(\eta_Y)= \mathrm{co}\left\{ x-y \;\middle|\; y \in B(x,\dd_Y(x)) \cap Y \right\}.
\]
\end{proof}

\begin{corollary}\label{propo:subgradient1}
Let $Y\subseteq \R^n$ be closed and let $x\in \R^n \setminus Y$. Then
\[
\partial_x(\dd_Y) = \mathrm{co}\left\{
\left.
\frac{x-y}{\|x-y\|}
\,\right|\,
y \in B(x,\dd_Y(x)) \cap Y
\right\}.
\]
\end{corollary}

\begin{proof}
Since $x \notin Y$, one has $\dd_Y(x)>0$. Moreover,
\[
\eta_Y=\frac{1}{2}\dd_Y^2,
\]
hence at every differentiability point of $\dd_Y$ one has
\[
\nabla \eta_Y=\dd_Y \nabla \dd_Y.
\]
Passing to the Clarke subdifferential at $x$, this gives
\[
\partial_x(\eta_Y)=\dd_Y(x)\,\partial_x(\dd_Y).
\]
Using \cref{P:squared-distance-subgradient}, we obtain
\[
\partial_x(\dd_Y)
=
\frac{1}{\dd_Y(x)}
\mathrm{co}\left\{ x-y \;\middle|\; y \in B(x,\dd_Y(x)) \cap Y \right\}.
\]
Since $\|x-y\|=\dd_Y(x)$ for every $y \in B(x,\dd_Y(x)) \cap Y$, this becomes
\[
\partial_x(\dd_Y) = \mathrm{co}\left\{
\frac{x-y}{\|x-y\|}
\;\middle|\;
y \in B(x,\dd_Y(x)) \cap Y
\right\}.
\]
\end{proof}

We now turn to the restriction of the distance function to a smooth submanifold.

\begin{proposition}\label{propo:subgradient2}
Let $Y \subseteq \R^n$ be closed and let $X \subseteq \R^n$ be a smooth manifold. Then, for every $x \in X \setminus Y$,
\[
\partial_x(\dist)=\mathrm{proj}_{T_xX}\bigl(\partial_x(\dd_Y)\bigr),
\]
where $\mathrm{proj}_{T_xX}\colon \R^n \to T_xX$ denotes the orthogonal projection. More explicitly,
\[
\partial_x(\dist)
=
\mathrm{co}\left\{
\frac{\mathrm{proj}_{T_xX}(x-y)}{\|x-y\|}
\;\middle|\;
y \in B(x,\dd_Y(x)) \cap Y
\right\}.
\]
\end{proposition}

\begin{proof}
Fix $x_0 \in X \setminus Y$, and choose a smooth chart
\[
\psi \colon U \subseteq \R^{\dim X} \to X
\]
such that $\psi(0)=x_0$.

As in the proof of \cref{P:squared-distance-subgradient}, after shrinking $U$ if necessary there exists $R>0$ such that
\[
B(\psi(u),\dd_Y(\psi(u))) \cap Y \subseteq B(0,R)
\qquad \text{for every } u \in U.
\]
Set
\[
W \coloneqq Y \cap B(0,R)
\]
and define
\[
F(u)\coloneqq -\frac{1}{2}\dd_Y|_X(\psi(u))^2.
\]
Then
\[
F(u)=\max_{y \in W}\Phi(u,y),
\]
where
\[
\Phi(u,y)\coloneqq
\langle \psi(u),y\rangle-\frac{\|\psi(u)\|^2}{2}-\frac{\|y\|^2}{2}.
\]
The function $\Phi$ is continuous, for every $y \in Z$ the function $\Phi(-,y)$ is smooth, and
\[
\nabla_u\Phi(u,y)=D\psi(u)^{\top}(y-\psi(u))
\]
depends continuously on $y$. Therefore \cref{P:danskin} applies and gives
\[
\label{Eq:F-subdif-first}
\partial_0F
=
\mathrm{co}\left\{
D\psi(0)^{\top}(y-x_0)
\;\middle|\;
y \in B(x_0,\dd_Y(x_0)) \cap Y
\right\}.
\]
Moreover, by the chain rule, we have
\[
\label{Eq:F-subdif-second}
\partial_0 F = D \psi(0)^\top \partial_{x_0} \left( -\frac{1}{2} \dist^2 \right).
\]
Identifying $T_{x_0} X$ with the image of $D \psi(0)$, Equations \eqref{Eq:F-subdif-first} and \eqref{Eq:F-subdif-second} give the following equality between projections onto $T_{x_0} X$:
\[
\operatorname{proj}_{T_{x_0}} \partial_{x_0} \left( -\frac{1}{2} \dist^2 \right) = \mathrm{co} \left\{
\mathrm{proj}_{T_{x_0}X} (y - x_0)
\;\middle|\;
y \in B(x_0, \dd_Y(x_0)) \cap Y
\right\}.
\]
The subdifferential $\partial_{x_0} \left( -\frac{1}{2} \dist^2 \right)$ is already in $T_{x_0} X$ by construction, so we can omit the projection on the lefthand side. Taking the opposite sign, we get
\[
\partial_{x_0}\left(\frac{1}{2}\dist^2\right)
=
\mathrm{co}\left\{
\mathrm{proj}_{T_{x_0}X}(x_0-y)
\;\middle|\;
y \in B(x_0,\dd_Y(x_0)) \cap Y
\right\}.
\]
Since $x_0 \notin Y$, one has $\dist(x_0)=\dd_Y(x_0)>0$, and therefore
\[
\partial_{x_0}(\dist)
=
\frac{1}{\dist(x_0)}
\partial_{x_0}\left(\frac{1}{2}\dist^2\right).
\]
Using that $\|x_0-y\|=\dd_Y(x_0)$ for every $y \in B(x_0,\dd_Y(x_0)) \cap Y$, we conclude that
\[
\partial_{x_0}(\dist)
=
\mathrm{co}\left\{
\frac{\mathrm{proj}_{T_{x_0}X}(x_0-y)}{\|x_0-y\|}
\;\middle|\;
y \in B(x_0,\dd_Y(x_0)) \cap Y
\right\}.
\]
By \cref{propo:subgradient1}, this is exactly
\[
\partial_{x_0}(\dist)=\mathrm{proj}_{T_{x_0}X}\bigl(\partial_{x_0}(\dd_Y)\bigr),
\]
because orthogonal projection is linear.
\end{proof}

\begin{corollary}\label{coro:projection}
Let $Y \subseteq \R^n$ be closed and let $X \subseteq \R^n$ be a smooth manifold.
\begin{enumerate}
\item If $x \in X \cap Y$, then $x$ is critical for $\dist$.
\item If $x \in X \setminus Y$, then $x$ is critical for $\dist$ if and only if
\[
\partial_x(\dd_Y)\cap N_xX \neq \varnothing,
\]
where $N_xX \coloneqq (T_xX)^\perp$.
\end{enumerate}
\end{corollary}

\begin{proof}
For the first statement, assume that $x \in X \cap Y$.
If $0$ were not in $\partial_x(\dist)$, then we could apply Clarke's inverse function theorem for locally Lipschitz functions \cite[Theorem 1]{clarkeIFT} and deduce that $\dist$ changes signs around $x$, which is impossible as it is always nonnegative. We deduce that $0 \in \partial_x(\dist)$, i.e., $x$ is critical.

Now let $x \in X \setminus Y$. By \cref{propo:subgradient2},
\[
\partial_x(\dist)=\mathrm{proj}_{T_xX}\bigl(\partial_x(\dd_Y)\bigr).
\]
Therefore $x$ is critical for $\dist$ if and only if
\[
0 \in \mathrm{proj}_{T_xX}\bigl(\partial_x(\dd_Y)\bigr).
\]
Since $\partial_x(\dd_Y)$ is convex and $\mathrm{proj}_{T_xX}$ is linear, this is equivalent to the existence of some $v \in \partial_x(\dd_Y)$ such that
\[
\mathrm{proj}_{T_xX}(v)=0.
\]
The latter equality is equivalent to $v \in N_xX$, and the second statement follows.
\end{proof}

\begin{remark}%\label{remark:restriction}
In \cref{P:squared-distance-subgradient,propo:subgradient1,propo:subgradient2,coro:projection}, the only assumptions are that $Y$ is closed and that $X$ is smooth. In particular, definability and transversality are not used here.
\end{remark}

\begin{example}
As an example, see Figure \ref{F:3D}, where $X$ is a sphere and $Y = \{y_0, y_1\}$ consists of two points in $\R^3$. $X$ intersects $M_Y$ in a circle, and for every point $x$ of this intersection, the subdifferential of $\dd_Y$ at $x$ is the segment $[\frac{x - y_0}{\|x - y_0\|}, \frac{x - y_1}{\|x - y_1\|}]$. However, only when $x$ is $x_0$ or $x_1$ does the normal space $N_x X$ intersect the subdifferential: thus, of all the points of $X$ on the medial axis, only $x_0$ and $x_1$ are critical. Of course, these are not the only critical points; the points minimizing $\dist$ are also critical points, and they do not lie on the medial axis.
\end{example}

\begin{figure}
\centering
\begin{tikzpicture}
\coordinate (c) at (0, 0);% sphere center
\coordinate (x0) at (2, 0);
\coordinate (x1) at (-2, 0);
\coordinate (y0) at (1, 3);
\coordinate (y1) at (1, -3);

\node (d0) at ($(x0)!-2cm!(y0)$) {};
\node (d1) at ($(x0)!-2cm!(y1)$) {};

% sphere behind plane
\draw[line width=2pt] (-45:2cm) arc (-45:-135:2cm); % 41.41 is the exact angle

% plane
\draw[fill=white] (1.323, 1.5) -- (2, 1.5) -- (4, -1.5) -- (-4, -1.5) -- node[left] {$M_Y$} (-2, 1.5) -- (-1.323, 1.5);
\draw[dashed] (-1.323, 1.5) -- (1.323, 1.5);

% sphere in front of plane
\draw[line width=2pt] (x0) arc (0:180:2cm);
\draw[dashed, line width=1pt]
  (x0) arc (0:-48.59:2cm)
  (x1) arc (-180:-131.41:2cm);

% intersection of sphere and plane
\draw[line width=2pt] (x0) arc (0:-180:2cm and 1cm);
\draw[dashed, line width=1pt] (x0) arc (0:180:2cm and 1cm);

% Y
\draw[dashed] (y0) -- (y1);

% normal vectors
\foreach \x in {0, 45, ..., 315} {
  \coordinate (temp) at (\x:2cm and 1cm);
  \draw[->] (temp) -- ($(temp)!-.2!(c)$);
}
\draw[dashed, line width=1pt] ($(x0) - (2, 0)$) -- ($(x0) + (2, 0)$) node[right] {$N_{x_0} X$};

\draw[->, line width=1pt] (x0) -- (d0) node[below]{$\frac{x_1 - y_0}{\| x_1 - y_0 \|}$};
\draw[->, line width=1pt] (x0) -- (d1) node[above]{$\frac{x_1 - y_1}{\| x_1 - y_1 \|}$};
\draw[line width=1pt] (d0) -- node[right, pos=.8] {$\partial_{x_1} \dd_Y$} (d1);

% points
\filldraw (y0) circle (2pt) node[left] {$y_0$};
\filldraw (y1) circle (2pt) node[right] {$y_1$};
\filldraw (x0) circle (2pt) node[above left] {$x_1$};
\filldraw (x1) circle (2pt) node[right] {$x_0$};
\filldraw (1, 0) circle (2pt);% projection of Y onto M_Y
\filldraw (2.63, 0) circle (2pt);% intersection of subgradient and normal space

\node[above left] at (120:2cm) {$X$};
\end{tikzpicture}
\caption{Example in $\R^3$ of the characterization of critical points using normal spaces. $X$ is a sphere and $Y = \{y_0, y_1\}$ consists of two points. The plane represents the medial axis $M_Y$; the small arrows represent normal vectors from various points of $X$; $N_{x_1} X$ (the horizontal dashed line) is the normal space of $X$ at $x_1$.}
\label{F:3D}
\end{figure}

\subsection{The Sard property}\label{sec:sard}

If $X \subseteq \R^n$ is smooth, the fact that the set of critical values of $\dist$ has zero measure  follows from a result of Rifford \cite{Rifford2004}, which generalizes the Morse-Sard theorem for smooth functions to the case of distance functions. In the definable context we can actually use the following result.

\begin{proposition}[{\cite[Theorem 7]{BDLS2005}}]
\label{propo:crit_values_definable}
Let $f \colon X\to \R$ be a locally Lipschitz and definable function. Then $f$ is constant on each component of the set of its critical points and therefore the set of critical values of $f$ is finite. In particular, if $X, Y \subseteq \R^n$ are definable sets with $X$ smooth, the set of critical values of $\dist$ is finite.
\end{proposition}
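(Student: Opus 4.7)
The plan is to invoke \cite[Theorem~7]{BDLS2005} directly and reduce the proposition to the verification of its two hypotheses (local Lipschitz continuity and definability) for the function $\dist$. I will first indicate the idea behind the cited theorem, then carry out the verification and derive the ``in particular'' statement.

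For the main assertion, the strategy of \cite{BDLS2005} is to exploit o-minimality to produce a Whitney stratification $\{S_\alpha\}$ of $X$ compatible with $C(f)$ such that $f|_{S_\alpha}$ is $\mathcal{C}^1$ for every stratum. On any stratum $S_\alpha \subseteq C(f)$, the hypothesis $0 \in \partial_x f$ combined with the standard projection relation $\operatorname{proj}_{T_x S_\alpha}(\partial_x f) \ni \nabla (f|_{S_\alpha})(x)$ forces $d(f|_{S_\alpha}) \equiv 0$, so $f$ is locally constant on each such stratum. The Whitney $(a)$-condition then propagates this constancy across incident strata, and hence across each connected component of $C(f)$. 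Since the Clarke subdifferential is itself a definable set-valued map in the o-minimal structure, $C(f)$ is a definable set, therefore has finitely many connected components, and so $f(C(f))$ is a finite subset of $\R$.

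It remains to verify the two hypotheses for $\dist$. Local Lipschitz continuity is immediate: the triangle inequality yields $|\dd_Y(z) - \dd_Y(z')| \leq \|z - z'\|$ for all $z,z' \in \R^n$, so $\dd_Y$ is $1$-Lipschitz on $\R^n$ and a fortiori its restriction to $X$ is $1$-Lipschitz. For definability, since $Y$ is closed and definable, the graph of $\dd_Y$ is cut out by the first-order formula
\[
t \geq 0 \ \wedge\ (\forall y \in Y)\ \|z - y\|^2 \geq t^2 \ \wedge\ (\exists y \in Y)\ \|z - y\|^2 = t^2,
\]
which, by closure of the o-minimal structure under projections and Boolean operations, defines a definable subset of $\R^n \times \R$. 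Thus $\dd_Y$ is a definable function, and since $X$ is definable, so is $\dist$. Applying \cite[Theorem~7]{BDLS2005} now yields the claim.

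I expect no serious obstacle, since the cited theorem does all the heavy lifting; the only point that genuinely needs attention is that the minimum in the defining formula for $\dd_Y$ is attained, which is precisely why the closedness of $Y$ appears in the hypothesis. If one wanted to reprove the cited theorem from scratch, the only nontrivial step would be the propagation of constancy of $f$ between adjacent strata, which follows from the Whitney condition and the Lipschitz continuity of $f$ — a locally Lipschitz function that is constant on a dense open subset of a connected definable set is constant everywhere.
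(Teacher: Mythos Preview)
The paper does not give its own proof of this proposition: it is stated as a direct citation of \cite[Theorem~7]{BDLS2005}, with no accompanying proof environment. Your proposal is therefore entirely in line with the paper's treatment --- you invoke the same cited result and, going beyond what the paper does, you verify explicitly that $\dist$ is $1$-Lipschitz and definable, and sketch the stratification argument behind the cited theorem. Nothing in your verification is incorrect; your remark about closedness of $Y$ being needed for the infimum to be attained is apt (and is indeed a standing hypothesis in \cref{S:Morse}), though one could also note that $\dd_Y = \dd_{\overline{Y}}$ with $\overline{Y}$ still definable, so the point is harmless either way.
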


\begin{remark}\label{remark:needed}
The requirement that $Y$ is a definable set seems necessary to avoid pathological examples. For instance, as stated in \cite[Remark 1.1]{Rifford2004}, while the set of all critical values of the distance function from a compact subset in $\R^3$ is of measure zero \cite{Fu1985}, counterexamples exist in higher dimensions \cite[Example 3]{Ferry1976} (see also \cite{Whitney1935}).
\end{remark}

\subsection{The deformation lemma}

Given $f \colon X \to \R$ and $t \in \R$, we denote by $X^t \coloneqq \{x\in X \mid f(x) \leq t\}$ and in the special case $f = \dist$ we set 
\[
X_Y^{t} \coloneqq \left\{x \in X \,\bigg|\, \dd_Y(x) \leq t\right\}.
\]
We are interested in understanding how the topology changes letting $t \in \R$ vary. This can be measured by the relative cohomology groups $H^*(X_Y^{t + \varepsilon}, X_Y^{t - \varepsilon})$, for $\varepsilon > 0$ sufficiently small. In this article, cohomology groups have coefficients in an arbitrary fixed field.
If  $\{c_0, \ldots, c_k\}$ are the critical values of the distance function, then no topology change happens for the set $X_Y^a$ when $a \in (c_j, c_{j+1})$. This is a consequence of the deformation lemma for locally Lipschitz functions.

\begin{lemma}
\label{lem:deformation}
Let $f \colon X \to \R$ be a proper, locally Lipschitz function. Assume that the interval $[a, b]$ contains no critical values of $f$. Then $X^b$ deformation retracts to $X^a$. 
Moreover, if $f$ is definable, then $X^b$ deformation retracts to $X^a$ even if $a$ is a critical value, as soon as there are no other critical values in $(a, b]$.
\end{lemma}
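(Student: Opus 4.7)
The plan is to adapt the classical Morse-theoretic proof, replacing the negative gradient flow by a flow along a continuous pseudo-gradient vector field built from the Clarke subdifferential. The non-critical condition $0\notin\partial_x f$ on $f^{-1}([a,b])$ is exactly what is needed to produce descent directions.

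First I would establish the local descent step. At each $x\in f^{-1}([a,b])$, the subdifferential $\partial_x f$ is a nonempty compact convex subset of $T_xX$ not containing $0$; by the Hahn-Banach separation theorem there exists a unit vector $v(x)\in T_xX$ and $\delta(x)>0$ with $\langle \xi, v(x)\rangle < -\delta(x)$ for every $\xi\in\partial_xf$. By the standard upper semicontinuity of the Clarke subdifferential (stated e.g.\ in \cite{Clarke}), this inequality persists on a neighborhood $U_x$ of $x$, with the same vector $v(x)$ and a uniform constant $\delta(x)/2$. Thus each point of $f^{-1}([a,b])$ has a neighborhood carrying a constant descent field.

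Next I would globalize. Covering the closed set $f^{-1}([a,b])$ by countably many such $U_x$, a smooth partition of unity $\{\rho_\alpha\}$ subordinate to the cover yields a continuous vector field $V\coloneqq \sum_\alpha \rho_\alpha v(x_\alpha)$ on a neighborhood of $f^{-1}([a,b])$, which by convexity of each $\partial_x f$ still satisfies $\langle \xi, V(x)\rangle < -\delta$ for all $\xi\in\partial_x f$, uniformly on $f^{-1}([a,b])$ (here one uses that $f^{-1}([a,b])$ is compact by properness of $f$). Extending $V$ by zero outside a compact neighborhood and normalizing, consider the flow $\Phi_t$ of $V$. A standard chain rule for locally Lipschitz functions along absolutely continuous curves (see \cite{Clarke}) gives $\frac{d}{dt}f(\Phi_t(x)) \leq -\delta$ as long as $\Phi_t(x)\in f^{-1}([a,b])$. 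Thus every point of $X^b$ reaches $X^a$ in finite time bounded by $(b-a)/\delta$, and setting $\tau(x)\coloneqq\inf\{t\ge 0 \mid f(\Phi_t(x))\le a\}$ one obtains a continuous function on $X^b$ (continuity uses the strict descent). The map $H(x,s)\coloneqq \Phi_{s\tau(x)}(x)$ is then the desired deformation retraction of $X^b$ onto $X^a$.

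For the second statement, I would exploit definability. Under the assumption that $a$ is the only critical value in $[a,b]$, the first part yields for every $\varepsilon\in(0,b-a]$ a deformation retraction $r_\varepsilon\colon X^b\to X^{a+\varepsilon}$. By \cref{propo:crit_values_definable} and definable triviality (Hardt's theorem in the o-minimal setting), the restriction of $f$ to $f^{-1}((a,b])$ is a definably trivial fibration, so one can choose the retractions $r_\varepsilon$ coherently as $\varepsilon\to 0^+$, producing a limit retraction $X^b\to X^a$ which is continuous by the trivialization. The main obstacle throughout is the first part: one must verify carefully that the Clarke-subdifferential descent inequality, obtained pointwise, passes to a genuine uniform continuous pseudo-gradient on the compact slab $f^{-1}([a,b])$, and that the resulting flow respects the level sets well enough to give a strong deformation retraction rather than merely a homotopy equivalence.
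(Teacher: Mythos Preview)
Your argument for the first part is correct and is exactly the pseudo-gradient flow construction underlying the reference the paper cites (Clarke's Implicit Function Theorem via \cite[Proposition~1.2]{APS1997}); the paper does not spell out these details. For the second part both you and the paper appeal to definable triviality \cite[Chapter~9, Theorem~1.2]{vdd}, but your particular execution---taking a limit of the $r_\varepsilon$ as $\varepsilon\to0^+$---is shakier than you suggest: the trivialization you invoke is only over the open interval $(a,b]$ and does not by itself control how the flow lines attach to $X^a$, so continuity of the limiting map onto $X^a$ is not automatic. The cleaner route implicit in the paper's one-line proof is to fix a single small $\varepsilon>0$, retract $X^b$ onto $X^{a+\varepsilon}$ by the first part, and then retract $X^{a+\varepsilon}$ onto $X^a$ directly via the definable mapping-cylinder structure near the level $a$ (as in \cref{thm:mappingcylinder}), composing the two retractions.
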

\begin{proof}
The first part of the statement follows from Clarke's implicit function theorem \cite[Theorem 1]{clarkeIFT}, see \cite[Proposition 1.2]{APS1997}. The second part follows from the first  part and the definable triviality \cite[Chapter 9, Theorem 1.2]{vdd}.
\end{proof}

\subsection{Passing a critical value: general case}\label{sec:triviality}
Observe first that, if $X, Y$ are definable with $X$ smooth, when $t \in \R$ passes a critical value, the topology of $X_Y^t$ changes in a controlled way. This follows again from the definable triviality and is an application of the next result. 
\begin{proposition}
\label{propo:criticalpass1}
Let $X, Y$ be definable with $X$ smooth and let $f \colon X \to \R$ be a Lipschitz, definable function. For every critical value $c \in \mathbb{R}$ there exists a definable set $K(c)$ such that for all $\varepsilon > 0$ small enough
\[
H^*(X_Y^{c + \varepsilon}, X_Y^{c - \varepsilon}) \cong \widetilde{H}^*(K(c)).
\]
\end{proposition}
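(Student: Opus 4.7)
My plan is to identify $K(c)$ with a definable model of the quotient $X_Y^{c+\varepsilon}/X_Y^{c-\varepsilon}$, constructed via the o-minimal triangulation theorem, and to derive the isomorphism from the standard fact $H^*(A,B)\cong\widetilde{H}^*(A/B)$ for CW-pairs.

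First, I would invoke \cref{propo:crit_values_definable} to note that the set of critical values of $f$ is finite, and choose $\varepsilon_0 > 0$ such that $[c-\varepsilon_0, c+\varepsilon_0]$ contains $c$ as its only critical value. Then, for any $0 < \varepsilon \le \varepsilon_0$, applying \cref{lem:deformation} to the intervals $[c+\varepsilon, c+\varepsilon_0]$ and $[c-\varepsilon_0, c-\varepsilon]$ (both free of critical values) produces definable deformation retractions of $X_Y^{c+\varepsilon_0}$ onto $X_Y^{c+\varepsilon}$ and of $X_Y^{c-\varepsilon_0}$ onto $X_Y^{c-\varepsilon}$. Consequently the pair $(X_Y^{c+\varepsilon}, X_Y^{c-\varepsilon})$ is homotopy equivalent as a pair to $(X_Y^{c+\varepsilon_0}, X_Y^{c-\varepsilon_0})$, so its homotopy type is independent of $\varepsilon$ once $\varepsilon \le \varepsilon_0$.

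Next, I would apply the o-minimal triangulation theorem (see \cite[Chapter 8]{vdd}) to produce a definable triangulation of $X_Y^{c+\varepsilon_0}$ compatible with the subset $X_Y^{c-\varepsilon_0}$. This endows the pair with a simplicial, and in particular CW-pair, structure. I would then define $K(c)$ as the definable simplicial complex obtained by collapsing the subcomplex $X_Y^{c-\varepsilon_0}$ to a point; it is realizable as a definable subset of some $\R^N$ via a simplicial embedding. The standard isomorphism $H^*(A,B)\cong \widetilde{H}^*(A/B)$ for CW-pairs, combined with the homotopy equivalence of pairs from the previous step, then yields
\[
H^*(X_Y^{c+\varepsilon}, X_Y^{c-\varepsilon}) \cong H^*(X_Y^{c+\varepsilon_0}, X_Y^{c-\varepsilon_0}) \cong \widetilde{H}^*(K(c)).
\]

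The main technical obstacle I anticipate is making $K(c)$ a \emph{genuinely} definable set, since topological quotients of definable sets are not in general definable in an o-minimal structure. The triangulation theorem circumvents this by providing a combinatorial model in which the collapse is realized simplicially, and this is the step where definability (as opposed to just the existence of a topological space with the right cohomology) is really being used. A secondary, minor point to verify is that the homotopy type of $K(c)$ does not depend on the auxiliary choice of $\varepsilon \in (0, \varepsilon_0]$, which follows from the pair-homotopy-equivalence above together with the homotopy-invariance of the quotient construction on good pairs.
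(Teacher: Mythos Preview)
Your proof is essentially the same as the paper's: both observe that the pair $(X_Y^{c+\varepsilon}, X_Y^{c-\varepsilon})$ has a fixed homotopy type for all small $\varepsilon$, define $K(c)$ as the quotient $X_Y^{c+\varepsilon}/X_Y^{c-\varepsilon}$, and invoke definable triangulation to realize this quotient as a definable set. The only difference is that you justify the first step via \cref{lem:deformation}, which carries a properness hypothesis not present in the proposition, whereas the paper appeals directly to definable triviality; this is a minor point, easily patched by citing the triviality theorem instead.
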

\begin{proof}
This is clear, since for $\varepsilon > 0$ small enough all the pairs $(X_Y^{c + \varepsilon}, X_Y^{c - \varepsilon})$ are homotopy equivalent (by definable triviality) and therefore one can define $K(c) \coloneqq X_Y^{c + \varepsilon}/X_Y^{c - \varepsilon}$. Since definable functions can be triangulated, then up to a definable homeomorphism, $X_Y^{c - \varepsilon}$ is a subcomplex of $X_Y^{c + \varepsilon}$ and therefore $K(c)$ is definable.
\end{proof}

\begin{remark}More generally, an analogue of the previous proposition still holds true for a proper definable function $f \colon X\to \R$ (without the assumption that it is locally Lipschitz). 
In fact, by definable triviality \cite[Chapter 9, Theorem 1.2]{vdd},  one can partition $\R$ into finitely many points and intervals such that on each interval $f$ is a trivial fibration. The sublevel sets $X^t$ of $f$ can only change when we pass the points -- we don't call these critical points (because $f$ might not be locally Lipschitz), but the relative cohomology is still that of a definable set.
\end{remark}

\subsection{Passing a critical value: nondegenerate case}

In order to say something more on the change in topology  of $X_Y^t$ when $t$ passes a critical value, in a way that is similar to Morse theory, we need to introduce the notion of \emph{nondegenerate} critical point.
This notion can be introduced at least for a special class of locally Lipschitz functions called \emph{continuous selections}.

\begin{definition}[Continuous selection]
Let $X$ be a smooth manifold and $\{f_0, \ldots, f_m\}\subset \mathcal{C}^2(X, \R)$. We will say that $f \colon X\to \R$ is a \emph{continuous selection} from $\{f_0, \ldots, f_m\}$ if $f$ is continuous and for every $x\in X$ there exists $i\in \{0, \ldots, m\}$ such that $f(x)=f_i(x).$
In this case we will write $f\in \mathrm{CS}(f_0, \ldots, f_m)$.
\end{definition}
If $f\in \mathrm{CS}(f_0, \ldots, f_m)$, for $i\in \{0, \ldots, m\}$ we denote by $S_i\subseteq X$  the set
\[
S_i\coloneqq\left\{x\in X\,\bigg|\, f(x)=f_i(x)\right\}.
\]
Given $x\in X$ the \emph{effective index set} of $f\in \mathrm{CS}(f_0, \ldots, f_m)$ at $x\in X$ is defined by
\[
I(x)\coloneqq\left\{i\in \{0, \ldots, m\}\,\bigg|\, x\in \overline{\operatorname{int}(S_i)}\right\}.
\]
Note that continuous selections are locally Lipschitz \cite{APS1997}, so we can talk about their critical points using the definitions above. Again, below we assume that $X$ is endowed with some Riemannian structure, so as to identify its tangent and cotangent bundles.

\begin{definition}[Nondegenerate critical points of continuous selections]
\label{D:nondegenerate-crit-pt}
Let $f\in \mathrm{CS}(f_0, \ldots, f_m)$. A critical point $x\in C(f)$ is said to be \emph{nondegenerate} (with respect to $f_0, \ldots, f_m$) if the following conditions are satisfied:
\begin{enumerate}
\item For every $i\in I(x)$ the set of differentials $\{D_x f_j \mid j\in I(x)\setminus \{i\}\}$ is linearly independent.
\item Denote by $(\lambda_i)_{i\in I(x)}$ the unique list of nonnegative real numbers such that 
\[
\sum_{i \in I(x)} \lambda_i = 1
\quad \textrm{and} \quad
\sum_{i \in I(x)} \lambda_i D_x f_i = 0
\]
and by $\lambda f \colon X\to \R$ the function defined by $\lambda f(z)\coloneqq\sum_{i\in I(x)}\lambda_if_i(z).$ Then the second differential of $\lambda f$ is nondegenerate on 
\[
V(x) \coloneqq \bigcap_{i\in I(x)}\ker(D_xf_i).
\]
\end{enumerate}
We denote by $\lini(x)\coloneqq\#I(x)-1$ and call this number the \emph{piecewise linear index}, and by $\quadi(x)$ the negative inertia index of the restriction of the second differential of $\lambda f$ to $V(x)$. We call $\quadi(x)$ the \emph{quadratic index}. See Figure \ref{F:nondegenerate} for an illustration of nondegenerate and degenerate critical points of $\dist$, and Figure \ref{F:indices} for an illustration of different indices for $\dist$.

\begin{figure}
\captionsetup[subfigure]{justification=centering}
\renewcommand\thesubfigure{\scshape\alph{subfigure}}
\begin{subfigure}[t]{.3\textwidth}
\centering
\begin{tikzpicture}
\coordinate (x) at (0, 0);
\coordinate (y1) at (0:1.5cm);
\coordinate (y2) at (120:1.5cm);
\coordinate (y3) at (-120:1.5cm);

\draw (x) circle (1.5cm);

\filldraw[line width=2pt, fill=white!80!black] ($(x)!1cm!60:(y1)$) -- ($(x)!1cm!60:(y2)$) -- ($(x)!1cm!60:(y3)$) -- cycle;
\node[yshift=3ex] at ($(x)!1cm!60:(y2)$) {$\partial_x f$};

\filldraw (x) circle (2pt) node[above]{$x$};
\filldraw (y1) circle (2pt) node[right]{$y_0$};
\filldraw (y2) circle (2pt) node[above]{$y_1$};
\filldraw (y3) circle (2pt) node[below]{$y_2$};
\end{tikzpicture}
\caption{nondegenerate critical point}
\label{F:sub-nondegen}
\end{subfigure}
\begin{subfigure}[t]{.3\textwidth}
\centering
\begin{tikzpicture}
\coordinate (x) at (0, 0);
\coordinate (y1) at (1.5, 0);
\coordinate (y2) at (0, 1.5);
\coordinate (y3) at (-1.5, 0);
\coordinate (y4) at (0, -1.5);

\draw (x) circle (1.5cm);

\filldraw[line width=2pt, fill=white!80!black] (1, 0) -- (0, 1) -- (-1, 0) -- (0, -1) -- cycle;
\node[yshift=3ex] at (-1, 0) {$\partial_x f$};

\filldraw (x) circle (2pt) node[above]{$x$};
\filldraw (y1) circle (2pt) node[right]{$y_0$};
\filldraw (y2) circle (2pt) node[above]{$y_1$};
\filldraw (y3) circle (2pt) node[left]{$y_2$};
\filldraw (y4) circle (2pt) node[below]{$y_3$};
\end{tikzpicture}
\caption{degenerate critical point for the first condition}
\label{F:sub-degen1}
\end{subfigure}
\begin{subfigure}[t]{.3\textwidth}
\centering
\begin{tikzpicture}
\coordinate (x) at (-.2, -.4);
\coordinate (y1) at (-1.4, .2);

\draw[line width=2pt] (-2.25, -1.5) -- (-.5, 2) node[right]{$Y$};
\draw[line width=2pt] (-1, -2) -- (.75, 1.5) node[right]{$X$};

\filldraw (x) circle (2pt) node[left]{$x$};
\filldraw (y1) circle (2pt) node[left]{$y_0$};
\end{tikzpicture}
\caption{degenerate critical point for the second condition}
\label{F:sub-degen2}
\end{subfigure}
\caption{Examples of critical points of $f = \dist$ and their degeneracy. (\subref{F:sub-nondegen}) and (\subref{F:sub-degen1}): $X$ is the plane $\R^2$ and $Y$ is a finite set of points in the plane. (\subref{F:sub-degen2}): $X$ and $Y$ are lines.}
\label{F:nondegenerate}
\end{figure}

\begin{figure}
\captionsetup[subfigure]{justification=centering}
\renewcommand\thesubfigure{\scshape\alph{subfigure}}
\begin{subfigure}{.3\textwidth}
\centering
\begin{tikzpicture}
\coordinate (x) at (0, 0);
\coordinate (y1) at (-1, 0);
\coordinate (y2) at (1, 0);

\draw[line width=2pt] (-1, -2) -- (1, 2) node[right]{$X$};

\filldraw (x) circle (2pt) node[left]{$x$};
\filldraw (y1) circle (2pt) node[left]{$y_1$};
\filldraw (y2) circle (2pt) node[right]{$y_2$};
\end{tikzpicture}
\caption{indices $(1, 0)$}
\label{F:sub-10}
\end{subfigure}
\begin{subfigure}{.3\textwidth}
\centering
\begin{tikzpicture}
\coordinate (x) at (0, 0);
\coordinate (y1) at ($(x)!1cm!90:(1, 2)$);

\draw[line width=2pt] (-1, -2) -- (1, 2) node[right]{$X$};

\filldraw (x) circle (2pt) node[left]{$x$};
\filldraw (y1) circle (2pt) node[left]{$y_1$};
\end{tikzpicture}
\caption{indices $(0, 0)$}
\label{F:sub-00}
\end{subfigure}
\begin{subfigure}{.3\textwidth}
\centering
\begin{tikzpicture}
\coordinate (x) at (0, 0);
\coordinate (y1) at ($(x)!1cm!90:(1, 2)$);
\coordinate (c) at ($(x)!-.5cm!90:(1, 2)$);

% \draw (x) circle (1);
\draw[line width=2pt] (-1, -2) -- (1, 2) node[right]{$X$};
\draw[line width=2pt] (c) circle (1.5);
\node[above left] at ($(x)!-2cm!90:(1, 2)$) {$Y$};

\filldraw (x) circle (2pt) node[left]{$x$};
\filldraw (y1) circle (2pt) node[left]{$y_1$};
\end{tikzpicture}
\caption{indices $(0, 1)$}
\label{F:sub-01}
\end{subfigure}
\caption{Examples of $\dist$ and $x \in X$ with different piecewise linear and quadratic indices, written as $(\lini(x), \quadi(x))$. (\subref{F:sub-10}) and (\subref{F:sub-00}): $X$ is a line and $Y$ is a set of points. (\subref{F:sub-01}): $X$ is a line, $Y$ is a circle, and $y_1$ is the closest point of $Y$ to $x$.}
\label{F:indices}
\end{figure}

Clearly, in the case $X$ is endowed with a Riemannian metric, the above conditions can be rephrased using the gradients of the $f_i$ instead of their differential. Moreover, the nondegeneracy of the second differential of $\lambda f$ on $V(x)$ is equivalent to the nondegeneracy of the restriction of the corresponding quadratic form on it.
\end{definition}

\begin{remark}
\label{remark:restriction}
An important remark concerns the reformulation of these conditions in the case $X$ is a submanifold of another manifold $N$ and we are considering the restriction of a continuous selection $f=g|_X$ where $g\in \mathrm{CS}(g_0, \ldots, g_m)$ with $g_i\in \mathcal{C}^2(N, \R)$, so that
$$f_j=g_j|_{X}, \quad j=0, \ldots, m.$$  Suppose that $X$ is given as the zero set of a regular equation $p=0$, with $p:N\to \R$ a smooth function (in \cref{S:Genericity-algebraic} we are interested in the case $N=\R^n$ and $p$ is a polynomial). Then, for all $j=0, \ldots, m$,
$$D_xf_j=D_xg_j|_{T_xX}=D_xg_j|_{\ker(D_xp)}.$$
Rewriting the above conditions with this notation, we see that $x\in X$ is a nondegenerate critical point of $f=g|_X$ if and only if
\begin{enumerate}
\item for every $i\in I(x)$ the set of differentials $\{D_x g_j|_{\ker(D_xp)} \mid j\in I(x)\setminus \{i\}\}$ is linearly independent;
\item denote by $\{\mu, (\lambda_i)_{i\in I(x)}\}$ the unique list of  numbers such that the $\lambda_i$ are nonnegative, $\mu\neq 0$,
\[
\sum_{i \in I(x)} \lambda_i = 1,
\quad \textrm{and} \quad
\mu D_xp+\sum_{i \in I(x)} \lambda_i D_x g_i = 0,
\]
and by $\lambda f \colon X\to \R$ the function defined by $\lambda f(z)\coloneqq\sum_{i\in I(x)}\lambda_if_i(z).$ Then the second differential of $\lambda f$ is nondegenerate on 
\[
V(x) \coloneqq \bigcap_{i\in I(x)}\ker(D_xf_i|_{\ker(D_xp)})=\ker(D_xp)\cap\left( \bigcap_{i\in I(x)}\ker(D_xf_i)\right).
\]
\end{enumerate}
In order to compute the second differential of $\lambda f$ on $V(x)$ using the original functions, we proceed as follows. We consider the function
\[
\widetilde{\lambda g} \coloneqq \mu p+\sum_{i\in I(x)}\lambda_ig_i \colon N \to \R.
\]
Observe that, since $X=\{p=0\}$, then $\widetilde{\lambda g}|_{X}=\lambda f.$ Moreover, $\widetilde{\lambda g}$ has a critical point at $x\in N$ and therefore the second differential of its restriction to $X$ equals the restriction of its second differential to $T_xX$. This means that the condition that the second differential of $\lambda f$ is nondegenerate on $V(x)$ is equivalent to the condition that
$$\mu D_x^2p+\sum_{i\in I(x)}\lambda_iD_x^2g_i\quad \textrm{is nondegenerate on $V(x)$}.$$
\end{remark}

\begin{remark}
\label{rmk:normalformdist1}
Continuous selections have normal forms at their critical points, in a fashion similar to the Morse Lemma \cite[Lemma~2.2]{milnor1963morse}. More precisely, if $x$ is a nondegenerate critical point of $f\in \mathrm{CS}(f_0, \ldots, f_m)$ with piecewise linear index $\lini = \lini(x)$ and quadratic index $\quadi = \quadi(x)$, then there exists a neighborhood $U$ of $x$ and a locally Lipschitz homeomorphism $\psi\colon\R^\lini \times \R^{n - \lini} \to U$ such that
\be
\label{eq:normal}
f(\psi(t_1, \ldots, t_\lini, t_{\lini+1}, \ldots, t_{n}))=f(x)+\ell(t_1, \ldots, t_\lini)-\sum_{j=\lini+1}^{\lini + \quadi} t_{j}^2 + \sum_{j = \lini + \quadi + 1}^n t_j^2,
\ee
where $\ell(t) \in \mathrm{CS}(t_1, \ldots, t_\lini, -(t_1 + \cdots + t_\lini))$, see \cite[Theorem 3.3]{JongenPallaschke1988}. Note in particular that nondegenerate critical points of continuous selections are isolated.
\end{remark}

In the special case of a continuous selection of the form $$f=\min\{f_0, \ldots, f_m\},$$ if $x$ is a nondegenerate critical point with indices $\lini = \lini(x)$ and $\quadi = \quadi(x)$, then 
the piecewise linear continuous selection $\ell$ in the normal form  (\ref{eq:normal}) is $\ell(t_1, \ldots, t_\lini) = \min \{ t_1, \ldots, t_\lini, -(t_1 + \cdots + t_\lini)\}$, see \cite[Theorem~3.3]{JongenPallaschke1988}.
In particular, in this case, if $f$ is also definable, then by \cite[Sect.~4]{APS1997} it follows the set $K(c)$ defined in \cref{propo:criticalpass1} depends only on $\lini$ and $\quadi$.

\begin{proposition}
\label{propo:passmin}
Let $f$ be a continuous definable selection of the form $f=\min\{f_0, \ldots, f_m\}$. Assume that all the critical points of $f$ at level $c\in \R$ are nondegenerate and denote them by $\{x_1, \ldots, x_\nu\}$. Then the set $K(c)$ from \cref{propo:criticalpass1} equals
$$K(c) = \bigsqcup_{j=1}^\nu S^{\lini(x_j) + \quadi(x_j)}.$$\
\end{proposition}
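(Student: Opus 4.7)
The plan is to localize around each critical point via excision, invoke the normal form from \cref{rmk:normalformdist1} to reduce to a model computation, and then identify each local contribution with a sphere of the expected dimension.

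Since the $x_j$ are nondegenerate, by \cref{rmk:normalformdist1} they are isolated; I choose pairwise disjoint open neighborhoods $U_j \ni x_j$ with compact closure, small enough that $C(f)\cap \overline{U_j} = \{x_j\}$. Outside $\bigcup_j U_j$ the function $f$ has no critical values in $[c-\varepsilon, c+\varepsilon]$ for $\varepsilon$ small enough, so the deformation \cref{lem:deformation} applied there shows that $X_Y^{c+\varepsilon}$ deformation retracts, rel $X_Y^{c-\varepsilon}$, onto $X_Y^{c-\varepsilon} \cup \bigcup_j (\overline{U_j}\cap X_Y^{c+\varepsilon})$. A standard excision argument then yields
\[
H^*(X_Y^{c+\varepsilon}, X_Y^{c-\varepsilon}) \;\cong\; \bigoplus_{j=1}^\nu H^*\bigl(\overline{U_j}\cap X_Y^{c+\varepsilon},\; \overline{U_j}\cap X_Y^{c-\varepsilon}\bigr),
\]
so the claim reduces to showing that each local relative cohomology coincides with $\widetilde{H}^*(S^{\lini(x_j)+\quadi(x_j)})$.

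Fixing $x = x_j$, I apply \cref{rmk:normalformdist1} together with the refinement from \cite[Theorem 3.3]{JongenPallaschke1988}, which for selections of the form $\min\{f_0,\ldots,f_m\}$ ensures that the piecewise linear summand in the normal form is $\ell(y_1, \ldots, y_\lini) = \min\{y_1, \ldots, y_\lini,\, -(y_1+\cdots+y_\lini)\}$. In the resulting local coordinates $(y', y'', y''') \in \R^{\lini} \times \R^{\quadi} \times \R^{n-\lini-\quadi}$ one has $f\circ\psi(y) = c + \ell(y') - |y''|^2 + |y'''|^2$. A radial contraction along $y'''$ reduces the local pair to the analogous pair on $\R^{\lini}\times\R^{\quadi}$. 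The quadratic $-|y''|^2$ is handled by standard Morse theory, contributing a local factor of the homotopy type $(D^{\quadi}, \partial D^{\quadi})$. For $\ell$, I would first observe that $\ell \le 0$ everywhere (if every $y_i > 0$ then $-(y_1+\cdots+y_\lini) < 0$; otherwise some $y_i \le 0$), with $\ell(0) = 0$; hence in a fixed small ball $B \subset \R^\lini$ we have $\{\ell \le \varepsilon\}\cap B = B$, while $\{\ell \ge -\varepsilon\}\cap B$ is the polytope $P_\varepsilon = \{y_i \ge -\varepsilon\} \cap \{y_1+\cdots+y_\lini \le \varepsilon\}$, which one checks is a closed $\lini$-simplex with vertices $(-\varepsilon,\ldots,-\varepsilon)$ and $(-\varepsilon,\ldots,\lini\varepsilon,\ldots,-\varepsilon)$. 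Thus the $y'$-part of the local pair has the homotopy type of $(P_\varepsilon, \partial P_\varepsilon)\simeq (D^\lini, \partial D^\lini)$.

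Multiplying the two disk factors yields a pair homotopy equivalent to $\bigl(D^\lini\times D^\quadi,\; \partial(D^\lini\times D^\quadi)\bigr)$, whose quotient is the smash product $S^\lini\wedge S^\quadi \cong S^{\lini+\quadi}$. Summing over $j$ then yields the stated formula for $K(c)$. The main obstacle, and the point where this diverges from classical Morse theory, is the piecewise linear summand: one must verify that $\{\ell \ge -\varepsilon\}$ is honestly a combinatorial simplex of the expected dimension and that the product pair collapses to a smash product (rather than to a more complicated CW pair). This is exactly the computation carried out in \cite[Section 4]{APS1997}, which establishes that the homotopy type of $K(c)$ at a nondegenerate critical point depends only on $\lini$ and $\quadi$.
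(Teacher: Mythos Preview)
Your argument is correct and is essentially an unpacking of the paper's one-line proof, which simply reads ``This follows from \cite[Sect.~4]{APS1997}, analyzing the conditions of \cite[Theorem 4.2]{APS1997} case by case.'' You carry out precisely that analysis: localize via excision and the deformation lemma, pass to the normal form of \cite[Theorem~3.3]{JongenPallaschke1988}, and identify the local pair with $(D^{\lini+\quadi},\partial D^{\lini+\quadi})$. The only step you leave somewhat informal---that the mixed sublevel pair for $\ell(y')-|y''|^2$ really has the homotopy type of the product pair $(D^\lini,\partial D^\lini)\times(D^\quadi,\partial D^\quadi)$---is exactly the content of the computation in \cite[Sect.~4]{APS1997} that you cite at the end, so nothing is missing.

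One remark worth making: the paper itself points out (in \cref{rmk:normalformdist}, immediately after the proposition) an even shorter route. By \cite[Corollary~3.4]{JongenPallaschke1988}, a min-type selection is \emph{topologically} equivalent near a nondegenerate critical point to the standard quadratic form of index $\lini+\quadi$; once you have that, the identification of the local pair with $(D^{\lini+\quadi},\partial D^{\lini+\quadi})$ is literally classical Morse theory, and the delicate product/smash step disappears entirely. A minor notational slip: you write $X_Y^{c\pm\varepsilon}$, but the proposition is stated for an arbitrary definable min-selection $f$, not for $\dist$; just write $\{f\le c\pm\varepsilon\}$.
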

\begin{proof}This follows from \cite[Sect.~4]{APS1997}, analyzing the conditions of \cite[Theorem 4.2]{APS1997} case by case.
\end{proof}

{ We will see in \cref{P:regular-value} a sufficient condition for the distance function $\dd_Y$ being of the form $\min\{f_0, \ldots, f_m\}$ in a neighborhood of a point $x$.

\begin{remark}
\label{rmk:normalformdist}
Let $x$ be a nondegenerate critical point with indices $\lini = \lini(x)$ and $\quadi = \quadi(x)$ of a continuous selection $f=\min\{f_0, \ldots, f_m\}$. Then $f$ is topologically equivalent to a quadratic form \cite[Corollary~3.4]{JongenPallaschke1988}:  
there exists a neighborhood $U$ of $x$ and a homeomorphism $\psi\colon\R^n \to U$ such that
\be
f(\psi(t_1, \ldots, t_{n})) = f(x) - \sum_{j=1}^{\lini + \quadi}t_{j}^2+\sum_{j=\lini + \quadi + 1}^n t_j^2 .
\ee
(Note that the conclusions of \cref{propo:passmin} can also be derived from this statement, following the standard argument from Morse theory.) 
We observe that a normal form of $f$ of this type can be also obtained using the theory of Min-type functions, see \cite[Sect.~3.2]{gershkovich1997morse}.
\end{remark}
}

\subsubsection{Nondegenerate critical points of distance functions}
In order to use the notion of nondegenerate critical point for $\dist$ introduced above, we need to make this function a continuous selection of \emph{finitely many} $\mathcal{C}^2$ functions, at least nearby a
critical point. In the case $Y$ is smooth, this is done as follows.

Denote by $\varphi \colon NY\to \R^n$ the restriction of the exponential map of $\R^n$ to the normal bundle of $Y$. Note that we can think of elements in $NY$ as pairs $(y, v)\in \R^{2n}$ with $y\in Y$ and $v\in N_yY$ and $\varphi(y,v)=y+v.$

\begin{proposition}
\label{P:regular-value}
Suppose that $Y$ is smooth and let $x \in \R^n$ be a regular value of $\varphi \colon NY \to \R^n$. Then \begin{enumerate}
\item \label{I:finite} $B(x, \dd_Y(x)) \cap Y$ is a finite set $\{y_0, \ldots, y_k\}$,
\end{enumerate}
and for all $\varepsilon > 0$ small enough, there exists $\delta >0$ such that
\begin{enumerate}[resume]
\item \label{I:C2} for every $i \in \{0, \ldots, k\}$, the function $f_i \coloneqq \dd_{B(y_i, \varepsilon) \cap Y}$ is $\mathcal{C}^2$ on $B(x, \delta)$,
\item \label{I:CS} the function $\dd_Y|_{B(x, \delta)}$ is a continuous selection of the $f_i$: specifically, $\dd_Y|_{B(x, \delta)} = \min_i f_i$.
\end{enumerate}
\end{proposition}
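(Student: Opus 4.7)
The strategy is to identify closest points of $Y$ to $x$ with elements of the fibre $\varphi^{-1}(x)$ and to exploit the local diffeomorphism supplied by the regular value hypothesis (note that $\varphi$ maps between manifolds of the same dimension $n$, so regularity at a preimage means $\varphi$ is a local diffeomorphism there). For part (1), any $y \in B(x, \dd_Y(x)) \cap Y$ satisfies $x - y \in N_y Y$ with $\|x-y\| = \dd_Y(x)$, so $(y, x-y)$ lies in $\varphi^{-1}(x)$ and inside the set $K \coloneqq \{(y,v) \in NY : y \in \overline{B(x, \dd_Y(x))} \cap Y,\ \|v\| = \dd_Y(x)\}$, which is compact because $Y$ is closed. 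The regular value hypothesis makes $\varphi^{-1}(x)$ discrete, so $\varphi^{-1}(x) \cap K$ is finite, which proves part (1).

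For part (2), for each $i \in \{0, \ldots, k\}$ pick neighborhoods $U_i$ of $(y_i, x-y_i)$ and $V_i$ of $x$ such that $\varphi|_{U_i}\colon U_i \to V_i$ is a diffeomorphism, and write $\varphi^{-1}(z) = (y_i(z), v_i(z))$ on $V_i$. I would choose $\varepsilon > 0$ small enough that the balls $B(y_i, \varepsilon)$ are pairwise disjoint, each $y_i$ is the only point of $\overline{B(y_i, \varepsilon)} \cap Y$ at distance $\dd_Y(x)$ from $x$, and $(y, v) \in U_i$ whenever $y \in \overline{B(y_i, \varepsilon)}$ and $v$ is within $\varepsilon$ of $x - y_i$. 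By compactness of the annulus $(\overline{B(y_i, \varepsilon)} \setminus B(y_i, \varepsilon/2)) \cap Y$ there is $\eta > 0$ with $\|x - y\|^2 \geq \dd_Y(x)^2 + \eta$ on this annulus. I then pick $\delta > 0$ so that $B(x, \delta) \subset \bigcap_i V_i$, $y_i(B(x, \delta)) \subset B(y_i, \varepsilon/2)$, and the inequality $\|z - y\|^2 \geq \dd_Y(x)^2 + \eta/2$ still holds for $z \in B(x, \delta)$ and $y \in \partial B(y_i, \varepsilon) \cap Y$. With these choices, the minimum of $y \mapsto \|z - y\|$ over $\overline{B(y_i, \varepsilon)} \cap Y$ is attained at an interior critical point $y$ of $\|z - \cdot\|^2|_Y$, such a point automatically satisfies $(y, z-y) \in U_i$, and so the local diffeomorphism forces $y = y_i(z)$. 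Therefore $f_i(z) = \|z - y_i(z)\|$ on $B(x, \delta)$, which is $\mathcal{C}^2$ because $y_i(\cdot)$ is $\mathcal{C}^2$ and $z - y_i(z)$ stays bounded away from zero.

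For part (3), the inequality $\dd_Y \leq \min_i f_i$ is immediate. For the reverse, if there were a sequence $z_n \to x$ admitting closest points $y_n \in Y$ uniformly away from $\{y_0, \ldots, y_k\}$, then since $\|z_n - y_n\| = \dd_Y(z_n) \to \dd_Y(x)$ the $y_n$ would be bounded, and by closedness of $Y$ a subsequence would converge to a closest point of $x$ outside our list, contradicting part (1). After possibly shrinking $\delta$, every closest point of $z \in B(x, \delta)$ in $Y$ therefore lies in some $B(y_i, \varepsilon)$, giving $\dd_Y(z) = f_i(z)$ for some $i$, and hence the minimum formula. The main obstacle is the coordinated tuning of $\varepsilon$ and $\delta$: one must simultaneously keep each $y_i(z)$ inside $B(y_i, \varepsilon)$, prevent the boundary $\partial B(y_i, \varepsilon) \cap Y$ from winning the local minimization, and prevent distant points of $Y$ from being closer to $z$ than the $y_i(z)$ are -- all of which are handled by the isolation of the $y_i$ supplied by part (1).
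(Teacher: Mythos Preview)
Your proof is correct and follows essentially the same approach as the paper's: both exploit the local diffeomorphism supplied by the regular-value hypothesis to obtain a discrete (hence, by compactness, finite) set of closest points, a smooth local nearest-point map $z \mapsto y_i(z)$ making each $f_i$ of class $\mathcal{C}^2$, and a compactness/continuity argument for the $\min$ formula. You are in fact more careful than the paper in part~(2), explicitly ruling out boundary minimizers on the annulus; the only quibble is a harmless constant-tuning slip---your estimate gives $\|(z-y)-(x-y_i)\| \le \|z-x\| + \|y-y_i\| < \delta + \varepsilon$ rather than $<\varepsilon$, so you should require $U_i$ to contain, say, the $2\varepsilon$-neighbourhood of $(y_i, x-y_i)$ and take $\delta<\varepsilon$.
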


\begin{proof}
Let $y \in B(x, \dd_Y(x)) \cap Y$. Then $(y, x - y) \in N Y$ and $\varphi(y, x - y) = x$. Since $x$ is a regular value of $\varphi$, no point $(y', v')$ near $(y, x - y)$ can have image $x$ by $\varphi$. In particular, this means that $y$ is an isolated point in $B(x, \dd_Y(x)) \cap Y$. Since the set $B(x, \dd_Y(x)) \cap Y$ is compact, we deduce that $B(x, \dd_Y(x)) \cap Y$ is a finite set, which we write as $\{y_0, \ldots, y_k\}$. This proves \eqref{I:finite}.

Since $x$ is a regular value of $\varphi$, we have, for all $i \in \{0, \ldots, k\}$, neighborhoods $U_i$ of $x$ in $\R^n$, $V_i$ of $y_i$ in $Y$, and $W_i$ of $x - y_i$ in $\R^{n - \dim Y} \cong N_{y_i} Y$ such that $V_i \times W_i \subseteq N Y$ (up to trivialization) and $\varphi_i \coloneqq \varphi|_{V_i \times W_i} \colon V_i \times W_i \to U_i$ is a diffeomorphism. In particular, we can choose these neighborhoods such that every point of $U_i$ is a regular value of $\varphi$ and such that $f = \min_i \dd_{V_i}$ on $U \coloneqq \bigcap_i U_i$. We define the function $\psi_i \colon U_i \to V_i$ by $\psi_i(z) \coloneqq \textnormal{proj}_1 (\varphi_i^{-1}(z))$. In particular, $\psi_i(x) = y_i$. Note that $\psi_i$ is a $\mathcal{C}^2$ function. Then, for all $z$ in $U_i$, $\dd_{V_i}(z) = \|\psi_i(z) - z\|$, and so the function $\dd_{V_i}$ is also $\mathcal{C}^2$.

For the choice of $V_i$, we can choose a ball around $y_i$ with radius $\varepsilon_i > 0$, and set $\varepsilon = \min_i \varepsilon_i$. This restricts the choice of the $U_i$, but we can always take $\delta$ such that $B(x, \delta) \subseteq U$. Then the functions $f_i \coloneqq \dd_{V_i}$ satisfy \eqref{I:C2} and \eqref{I:CS}.
\end{proof}

Related results to \cref{P:regular-value} can be found in \cite[Sect.~3]{song2023generalized}.

\begin{example}\label{example:nonCS}
The conclusions of \cref{P:regular-value} are not valid if we drop the assumption that $x$ is a regular value of the normal exponential map. For instance, in the case of a smooth parabola $Y=\{x_2=x_1^2\}$ in $\R^2$, the set $M_Y$ equals the half line $\{(0, t), t > 1\}$. The point $x=(0,1) \in \overline{M_Y}$ has a unique closest point in $Y$ (the origin), but it is not a regular value of the normal exponential map. Moreover every point $x'$ of the form $x'=(0, t)$ with $t>1$ has two closest points in $Y$ and so the function $\dd_Y$ is not differentiable at $x$, and therefore it cannot be $\mathcal{C}^2$, contradicting \cref{P:regular-value}\eqref{I:C2}.
\end{example}

\subsection{Morse theory of distance functions}

Let us go back now to the problem of understanding the contribution of a nondegenerate critical point of $\dist$ to the homology of $X$. To start with, we recall the following result from semialgebraic geometry, see \cite[Theorem 1.42]{lecturenotesantonio}. 
The result shows that compact definable sets have definable mapping cylinder neighborhoods (the proof therein works in the definable framework).

\begin{theorem}
\label{thm:mappingcylinder}
Let $S$ be a compact definable set and $f \colon S \to [0, \infty)$ be a continuous definable function. Denote by $Z \coloneqq f^{-1}(0)$ the zero set of $f$. Then there exists $\varepsilon_f > 0$ such that for every $\varepsilon \in (0, \varepsilon_f)$ the set $\{f \leq \varepsilon\}$ is a mapping cylinder neighborhood of $Z$ in $S$. In particular, the inclusions $Z\hookrightarrow \{f < \varepsilon\} \hookrightarrow \{f \leq \varepsilon\}$ are homotopy equivalences.
\end{theorem}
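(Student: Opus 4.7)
The plan is to invoke the o-minimal definable triviality theorem (Hardt's theorem, \cite[Chapter 9, Theorem 1.2]{vdd}) to trivialize $f$ over a punctured interval $(0,\varepsilon_f]$, and then to compactify this trivialization by taking radial limits at $t=0$, producing the mapping cylinder structure over $Z$. Concretely, applying definable triviality to $f\colon S\to[0,\infty)$ yields a finite partition $0=t_0<t_1<\cdots<t_N$ of $[0,\max f]$ such that $f$ is definably trivial over each open interval $(t_{i-1},t_i)$. Set $\varepsilon_f\coloneqq t_1$; for any $\varepsilon\in(0,\varepsilon_f)$, setting $L\coloneqq f^{-1}(\varepsilon)$, definable triviality over $(0,t_1)$ provides a definable homeomorphism
\[
\Phi\colon f^{-1}((0,\varepsilon])\longrightarrow L\times(0,\varepsilon], \qquad f\circ\Phi^{-1}(l,t)=t.
\]

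The heart of the argument is the construction of a continuous projection $\pi\colon L\to Z$. For each $l\in L$, consider the definable curve $\gamma_l(t)\coloneqq\Phi^{-1}(l,t)$. Compactness of $S$ ensures that $\gamma_l$ admits cluster points as $t\to 0^+$, and since $f(\gamma_l(t))=t\to 0$, every such cluster point lies in $Z$. By the o-minimal curve selection lemma applied to the definable curve $\gamma_l$, the limit $\pi(l)\coloneqq\lim_{t\to 0^+}\gamma_l(t)\in Z$ actually exists. To see that $\pi$ is continuous in $l$, I would triangulate the pair $(S,Z)$ definably and compatibly with the image of $\Phi^{-1}$, and argue simplex-by-simplex that the family $\{\gamma_l\}$ converges uniformly in $l$. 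This continuity step is the main obstacle and is where definability enters essentially: outside the o-minimal framework, radial limits at $Z$ need neither exist nor depend continuously on $l$.

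Once $\pi$ is in hand, form the mapping cylinder $M_\pi\coloneqq((L\times[0,\varepsilon])\sqcup Z)/\!\sim$ with the identifications $(l,0)\sim\pi(l)$, and define
\[
\Psi\colon M_\pi \longrightarrow \{f\leq\varepsilon\}, \qquad \Psi(l,t)=\Phi^{-1}(l,t) \text{ for } t>0, \qquad \Psi(l,0)=\pi(l).
\]
By construction $\Psi$ is a continuous definable bijection from the compact space $M_\pi$ onto the Hausdorff space $\{f\leq\varepsilon\}$, hence a homeomorphism realizing $\{f\leq\varepsilon\}$ as the mapping cylinder neighborhood of $Z$. The homotopy equivalences in the second assertion then follow formally: the standard mapping cylinder deformation retraction $((l,t),s)\mapsto(l,(1-s)t)$ collapses $M_\pi$ onto $Z$ and, transported via $\Psi$, restricts to deformation retractions of both $\{f<\varepsilon\}$ and $\{f\leq\varepsilon\}$ onto $Z$, showing that $Z\hookrightarrow\{f<\varepsilon\}\hookrightarrow\{f\leq\varepsilon\}$ are homotopy equivalences.
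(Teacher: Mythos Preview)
The paper does not actually prove this theorem: it records it as a known fact and points to \cite[Theorem~1.42]{lecturenotesantonio}, noting that the argument there carries over verbatim to the definable setting. So there is no in-paper proof to compare against, and your outline should be judged on its own.

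Your strategy (Hardt triviality over $(0,\varepsilon_f)$, then extend the leaves to $t=0$ by taking limits) is natural, but the step you yourself flag as the ``main obstacle'' is a genuine gap, not a routine verification. Hardt's theorem hands you \emph{some} definable trivialization $\Phi$, and nothing in its statement controls how the leaves $\Phi^{-1}(\{l\}\times(0,\varepsilon])$ approach $Z$. In fact a Hardt trivialization can fail to extend continuously to $t=0$: on $S=[0,1]^2$ with $f(x,y)=y$, take $\Phi^{-1}(l,t)=(h(l,t),t)$ where $h(\cdot,t)\colon[0,1]\to[0,1]$ is the piecewise-linear homeomorphism carrying $[0,\tfrac12-t]$ onto $[0,t^2]$, $[\tfrac12-t,\tfrac12+t]$ onto $[t^2,1-t^2]$, and $[\tfrac12+t,1]$ onto $[1-t^2,1]$. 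This is a perfectly good semialgebraic trivialization of $f$ over $(0,\tfrac12)$, yet $\lim_{t\to 0}h(l,t)$ equals $0$ for $l<\tfrac12$ and $1$ for $l>\tfrac12$, so your $\pi$ is discontinuous and $\Psi$ is not continuous. (A side remark: the pointwise existence of $\lim_{t\to0}\gamma_l(t)$ follows from the monotonicity theorem applied coordinatewise, not from curve selection.)

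Your proposed fix---triangulate $(S,Z)$ compatibly with $f^{-1}((0,\varepsilon])$ and argue simplex-by-simplex---does not rescue the argument, because the trivialization $\Phi$ produced by Hardt has no relation to any triangulation you impose afterwards; the curves $\gamma_l$ need not respect simplices. The standard route, and almost certainly the one in the cited reference, is to bypass Hardt and instead invoke the o-minimal \emph{triangulation of continuous definable functions}: one triangulates $S$ so that $Z$ is a subcomplex and $f$ is, on each open simplex, a coordinate projection up to definable homeomorphism. The mapping cylinder structure over $Z$ is then read off directly from the closed star of $Z$ in this triangulation, and $\{f\le\varepsilon\}$ is identified with it for small $\varepsilon$. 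In short, if triangulation is going to enter anyway, triangulate $f$ rather than $S$, and build the cylinder from the simplicial data instead of from an uncontrolled trivialization.
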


Going back to our setting, note that $X\cap Y$ is always critical and we have the following corollary.

\begin{corollary}
Let $Y$ be closed and definable and $X$ smooth and definable in $\R^n$. Then, for all $R > 0$, there exists $\varepsilon > 0$ such that, for all $t \in (0, \varepsilon)$, $X_Y^t \cap B(0, R)$ deformation retracts to $X_Y^0 \cap B(0, R)$.
\end{corollary}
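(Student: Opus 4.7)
The plan is to apply \cref{thm:mappingcylinder} directly to the restriction of $\dist$ to a compact definable neighborhood of $X \cap Y$ inside $B(0,R)$. Concretely, set $S \coloneqq X \cap B(0,R)$ (reading $B(0,R)$ as the closed ball, consistent with the convention used in, e.g., \cref{propo:subgradient1}) and let $f \coloneqq \dist|_S$. Since $X$ is closed in $\R^n$ (the standing assumption of the paper) and $B(0,R)$ is compact, $S$ is compact; it is also definable as the intersection of two definable sets. The function $f \colon S \to [0,\infty)$ is continuous because $\dist$ is $1$-Lipschitz, and it is definable because $\dd_Y$ is obtained from the definable family $(z,y) \mapsto \|y - z\|$ by taking the minimum over the definable set $Y$ (definable Skolemization in the ambient o-minimal structure), after which restriction to $S$ preserves definability. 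Using that $Y$ is closed we identify the zero set
\[
f^{-1}(0) = X \cap Y \cap B(0,R) = X_Y^0 \cap B(0,R).
\]

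With these hypotheses in hand, \cref{thm:mappingcylinder} produces $\varepsilon > 0$ such that for every $t \in (0,\varepsilon)$ the sublevel set $\{x \in S \mid f(x) \leq t\} = X_Y^t \cap B(0,R)$ is a mapping cylinder neighborhood of its zero fiber $X_Y^0 \cap B(0,R)$; in particular it deformation retracts onto $X_Y^0 \cap B(0,R)$, which is the desired conclusion.

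The main point that requires care — and the only one that is not purely bookkeeping — is the verification that $\dist$ is definable in the o-minimal sense. There is no genuine geometric or topological obstacle beyond invoking \cref{thm:mappingcylinder}: compactness of $S$ is what rules out the pathologies that could otherwise prevent a definable function from having a mapping cylinder neighborhood of its zero set, and cutting off with $B(0,R)$ is precisely what guarantees this compactness in the generally non-compact setting of the corollary.
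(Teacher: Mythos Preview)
Your proof is correct and follows essentially the same approach as the paper: set $S \coloneqq X \cap B(0,R)$, take $f \coloneqq \dist|_S$, and apply \cref{thm:mappingcylinder}. You have simply spelled out in more detail why $S$ is compact and definable and why $f$ is continuous and definable, which the paper leaves implicit.
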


\begin{proof}
Given $R>0$ consider the compact definable set $S \coloneqq X \cap B(0, R)$  together with the continuous definable function $f \colon S \to [0, \infty)$ given by $\alpha \coloneqq \dd_Y|_S.$ Then the conclusion follows from the definable version of \cref{thm:mappingcylinder}.
\end{proof}

The following definition is useful.

\begin{definition}\label{def:setnondegcritpts} Let $Y\subseteq \R^n$ be closed and definable and $X$ be a smooth manifold. We say that a point $x$ of $X$ is a \emph{nondegenerate critical point of $\dist$} if $\dist$ is a continuous selection at $x$ and $x$ is a nondegenerate critical point for the continuous selection. For every $\lini, \quadi \in \N$ we denote by $C_{\lini, \quadi}(X,Y)$ the set of nondegenerate critical points of $\dist$ with piecewise linear index $\lini$ and quadratic index $\quadi$ and which are not in $Y$.
\end{definition}

The next result is a specialization of \cref{propo:criticalpass1,propo:passmin} for nondegenerate critical points of distance functions.
\begin{corollary}
\label{propo:criticalpass2dist}
Let $Y\subseteq \R^n$ be closed and definable and $X\subseteq \R^n$ be a smooth manifold. Let $c\neq 0$ be a critical value of $\dist$ such that 
$C(\dist)\cap (\dist)^{-1}(c)=\{x_1, \ldots, x_\nu\}$ consists only of nondegenerate critical points. For every $j=1, \ldots, \nu$ denote by $(\lini_j, \quadi_j)$ the pair of numbers such that $x_j\in C_{\lini_j, \quadi_j}(X, Y).$
Then for $\varepsilon>0$ sufficiently small, 
\[
H^*(X^{ c+\varepsilon}_Y, X^{c-\varepsilon}_Y)\cong \bigoplus_{j=1}^\nu\widetilde{H}^*(S^{\lini_j+\quadi_j}).
\]
\end{corollary}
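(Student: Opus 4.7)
The plan is to localize the relative cohomology to each nondegenerate critical point via excision and then identify each local contribution using \cref{propo:passmin}. First I would invoke \cref{rmk:normalformdist1}, which guarantees that nondegenerate critical points of a continuous selection are isolated, to choose pairwise disjoint open neighborhoods $U_1,\ldots,U_\nu\subseteq X$ of $x_1,\ldots,x_\nu$ such that, for $\varepsilon>0$ small enough, $C(\dist)\cap(\dist)^{-1}([c-\varepsilon,c+\varepsilon])\subseteq\bigsqcup_j U_j$. Applying the Deformation Lemma (\cref{lem:deformation}) to $\dist$ on the complement of $\bigsqcup_j U_j$, where there are no critical values in $[c-\varepsilon,c+\varepsilon]$, together with excision, yields the direct sum decomposition
\[
H^*(X_Y^{c+\varepsilon},X_Y^{c-\varepsilon})\;\cong\;\bigoplus_{j=1}^\nu H^*\bigl(X_Y^{c+\varepsilon}\cap U_j,\,X_Y^{c-\varepsilon}\cap U_j\bigr),
\]
reducing the statement to a purely local calculation near each $x_j$.

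Around each $x_j$, the function $\dist$ is a continuous selection by \cref{def:setnondegcritpts}. To invoke \cref{propo:passmin} I need the stronger fact that, on a neighborhood of $x_j$, this selection can be written as $\min\{f_0,\ldots,f_m\}$ for finitely many $\mathcal{C}^2$ functions. This follows from the intrinsic structure of distance functions: only points of $Y$ close to the finite set $B(x_j,\dd_Y(x_j))\cap Y$ can realize $\dd_Y(z)$ for $z$ close to $x_j$, and one can cover that finite set by pieces $V_0,\ldots,V_m\subseteq Y$ on which $\dd_{V_i}|_X$ is $\mathcal{C}^2$. In the smooth case this is exactly the content of \cref{P:regular-value}; in the general definable case the analogue follows from a local stratification of $Y$ combined with the continuous-selection hypothesis. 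By construction, the effective index set, piecewise linear index, and quadratic index of $\min_i \dd_{V_i}|_X$ at $x_j$ coincide with $\lini_j$ and $\quadi_j$.

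\cref{propo:passmin}, applied to each restricted function $\dist|_{U_j}$, then identifies the local summand as $\widetilde{H}^*(S^{\lini_j+\quadi_j})$, and summing over $j$ gives the desired isomorphism. The hard part will be the second step: rigorously producing the finite list of $\mathcal{C}^2$ selectors realizing $\dist$ as a minimum near each $x_j$, and verifying that the indices attached to this concrete min-type selection agree with the indices $(\lini_j,\quadi_j)$ coming from the abstract continuous-selection structure given by \cref{def:setnondegcritpts}. Once this identification is in place, the remaining steps are formal consequences of excision and of the already-established results \cref{lem:deformation,propo:passmin}.
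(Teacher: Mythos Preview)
Your proposal is correct and aligns with the paper's intended argument: the paper in fact gives no proof at all for this corollary, merely stating that it is ``a specialization of \cref{propo:criticalpass1,propo:passmin} for nondegenerate critical points of distance functions.'' Your localization-then-\cref{propo:passmin} outline is exactly how one would unpack that sentence, and you have correctly isolated the one nontrivial point the paper leaves implicit---namely, that being a continuous selection in the sense of \cref{def:setnondegcritpts} must be upgraded to a local $\min$-type representation in order to invoke \cref{propo:passmin} (or equivalently the normal form of \cref{rmk:normalformdist}). For distance functions this is natural, since $\dd_Y$ is globally a minimum and the local selectors are the $\dd_{V_i}$ of \cref{P:regular-value}, so your identification of indices goes through; but you are right that this step, while routine, carries the real content.
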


\begin{definition}[Nondegenerate pairs]
\label{D:nondegenerate-pairs}
We say that a pair $(X,Y)$ is a \emph{nondegenerate pair} if all critical points of $\dist$ not in $Y$ are nondegenerate.
\end{definition}

For a pair of spaces $Z'\subseteq Z$, let $b_i (Z)$ denote the dimension of $H^i (Z)$ as a vector space over the fixed field of coefficients, and let $b_i (Z,Z')$ denote the dimension of $H^i (Z,Z')$. In our setting, strong Morse inequalities can be stated as follows.  

\begin{proposition}[strong Morse inequalities]
\label{propo:strongMorse}
Let $Y\subseteq \R^n$ be a closed and definable set and $X\subseteq \R^n$ be a smooth, compact and definable manifold such that $(X, Y)$ is a nondegenerate pair. 
Then, for any integer $\lambda \ge 0$, 
\[
\sum_{i=0}^{\lambda} (-1)^{i+\lambda} b_i (X) \le \sum_{i=0}^{\lambda} (-1)^{i+\lambda} \left( b_i (X\cap Y) + \sum_{\lini+\quadi=i} \# C_{\lini,\quadi}(X,Y) \right) .
\]
\end{proposition}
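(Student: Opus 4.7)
My plan is to adapt the classical Morse-theoretic derivation of the strong Morse inequalities, filtering $X$ by the sublevel sets of $\dist$ and chaining the change-of-topology results established in Sections~2.3--2.6. Since $X$ is compact and definable and $\dist$ is Lipschitz and definable, \cref{propo:crit_values_definable} gives finitely many critical values $c_0 < c_1 < \cdots < c_N$; the case $c_0 = 0$ occurs exactly when $X \cap Y \neq \varnothing$. I choose regular values $a_j \in (c_j, c_{j+1})$ for $j < N$ and $a_N$ large enough that $X_Y^{a_N} = X$, and set $W_j \coloneqq X_Y^{a_j}$.

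Next I identify the relative cohomology of each consecutive pair. The deformation lemma (\cref{lem:deformation}) makes $W_{j-1} \hookrightarrow X_Y^{c_j - \varepsilon}$ a homotopy equivalence for $\varepsilon > 0$ small, so for $j \geq 1$ we have $H^*(W_j, W_{j-1}) \cong H^*(X_Y^{c_j + \varepsilon}, X_Y^{c_j - \varepsilon})$. For $c_j > 0$, every critical point at level $c_j$ is nondegenerate by the assumption that $(X,Y)$ is a nondegenerate pair, so \cref{propo:criticalpass2dist} applies and identifies this relative cohomology with the direct sum of the $\widetilde{H}^*(S^{\lini(x)+\quadi(x)})$ over all critical points $x$ of $\dist$ at level $c_j$. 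In particular
\[
b_i(W_j, W_{j-1}) \;=\; \# \{\, x \in C_{\lini, \quadi}(X, Y) : \dist(x) = c_j,\ \lini + \quadi = i \,\}.
\]
When $X \cap Y \neq \varnothing$, so $c_0 = 0$, \cref{thm:mappingcylinder} applied to $\dd_Y|_X$ on the compact definable set $X$ ensures that $W_0$ deformation retracts onto $X \cap Y$, so $H^*(W_0) \cong H^*(X \cap Y)$; when $X \cap Y = \varnothing$ I start the filtration from $W_{-1} = \varnothing$ and the $b_i(X \cap Y)$ term drops out automatically.

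The conclusion is then pure homological algebra. For any pair $(Z, W)$, the cohomology long exact sequence yields the standard identity of Poincar\'e polynomials
\[
P_t(W) + P_t(Z, W) \;=\; P_t(Z) + (1+t) R(t),
\]
with $R \in \Z_{\geq 0}[t]$. Iterating this identity along $W_0 \subseteq W_1 \subseteq \cdots \subseteq W_N = X$ and substituting the descriptions above produces
\[
P_t(X \cap Y) + \sum_{\lini, \quadi \geq 0} \#C_{\lini, \quadi}(X, Y)\, t^{\lini + \quadi} \;=\; P_t(X) + (1+t) Q(t),
\]
for some $Q \in \Z_{\geq 0}[t]$. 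Extracting the coefficient of $t^i$ and forming the alternating partial sum $\sum_{i=0}^\lambda (-1)^{i+\lambda}$ yields the claimed inequality, since $(1+t) Q(t)$ contributes nonnegatively to every such partial sum. The one subtle point is the bottom of the filtration: guaranteeing that $W_0$ has the cohomology of $X \cap Y$ requires the definable mapping cylinder neighborhood theorem (\cref{thm:mappingcylinder}), which is precisely where the compactness and definability of $X$ enter decisively.
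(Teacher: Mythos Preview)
Your proof is correct and follows essentially the same route as the paper: filter $X$ by sublevel sets of $\dist$ between consecutive critical values, identify the relative cohomology at each step via \cref{propo:criticalpass2dist} (and at the bottom via the retraction onto $X\cap Y$), and then invoke the standard Milnor-style alternating-sum argument. The only cosmetic differences are that the paper starts the filtration at $X_Y^{d_0}=\varnothing$ with $d_0<0$ (so the $b_i(X\cap Y)$ term arises as the $j=0$ relative step) and defers the homological algebra to \cite[Sect.~5]{milnor1963morse}, whereas you start at $W_0\simeq X\cap Y$ and spell out the Poincar\'e-polynomial identity explicitly.
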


\begin{proof}
\label{prop:Sardproperty}
Let $0=c_0 < c_1 < \cdots < c_m$ denote the critical values of $\dist$, which are finitely many by \cref{propo:crit_values_definable}. Choose real numbers $\{d_j\}_{j=0}^{m+1}$ with $d_0<c_0$, $c_m < d_{m+1}$, and $c_{j-1} < d_j < c_j$ for all $j\in \{1,\ldots ,m\}$. Proceeding exactly as in \cite[Sect.~5]{milnor1963morse} one obtains, for every integer $\lambda \ge 0$, the inequality
\[
\sum_{i=0}^{\lambda} (-1)^{i+\lambda} b_i (X_Y^{d_{m+1}}) \le \sum_{j=0}^{m} \sum_{i=0}^{\lambda} (-1)^{i+\lambda} b_i (X_Y^{d_{j+1}},X_Y^{d_{j}}).
\]
On the left-hand side, we observe that $b_i (X_Y^{d_{m+1}})=b_i (X)$.
On the right-hand side, we can rearrange the sum as 
\[
\sum_{i=0}^{\lambda} (-1)^{i+\lambda} \left( \sum_{j=0}^{m}  b_i (X_Y^{d_{j+1}},X_Y^{d_{j}}) \right)
\]
and observe that $\sum_{j=0}^{m}  b_i (X_Y^{d_{j+1}},X_Y^{d_{j}}) = b_i (X\cap Y) + \sum_{\lini+\quadi=i} \# C_{\lini,\quadi}(X,Y)$ by \cref{propo:criticalpass2dist}, noting that the term $b_i (X\cap Y)$ comes from crossing the $0^{\text{th}}$ critical value. 
\end{proof}

The weak Morse inequalities
\[
b_{\lambda} (X) \le b_{\lambda} (X \cap Y) + \sum_{\lini + \quadi = \lambda} \# C_{\lini, \quadi}(X,Y)
\]
are obtained from \cref{propo:strongMorse} by adding the strong inequalities for $\lambda$ and $\lambda -1$.
Writing $\chi(Z) \coloneqq \sum_{i\ge 0} (-1)^i b_i (Z)$, we obtain the following interesting result that relates the Euler characteristics of $X$ and $Y$ to the critical points of the corresponding relative distance functions.

\begin{corollary}\label{coro:duality}
Let $Y\subseteq \R^n$ be a closed and definable set and $X\subseteq \R^n$ be a smooth, compact, and definable manifold such that $(X, Y)$ is a nondegenerate pair. Then
\[
\chi(X\cap Y)+\sum_{\lini, \quadi \geq 0}(-1)^{\lini + \quadi}\#C_{\lini, \quadi}(X, Y)=\chi(X).
\]
Furthermore, if $Y$ is smooth, $X$ is closed and definable, and $(Y,X)$ is a nondegenerate pair, we get the following equation:
\begin{equation}
\label{E:chi}
\chi(Y)+\sum_{\lini, \quadi\geq 0}(-1)^{\lini + \quadi}\#C_{\lini, \quadi}(X, Y)=\chi(X)+\sum_{\lini, \quadi\geq 0}(-1)^{\lini + \quadi}\#C_{\lini, \quadi}(Y, X).
\end{equation}
\end{corollary}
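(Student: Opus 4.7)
The plan is to obtain the first equation as the Euler characteristic version of the strong Morse inequalities established in \cref{propo:strongMorse}, and then to derive the second (duality) equation by applying the first to both ordered pairs $(X,Y)$ and $(Y,X)$ and subtracting.

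For the first equation, I would reuse the setup of the proof of \cref{propo:strongMorse}: let $0=c_0<c_1<\cdots<c_m$ be the critical values of $\dist$ (finite by \cref{propo:crit_values_definable}) and pick regular values $d_0<c_0<d_1<c_1<\cdots<c_m<d_{m+1}$. Since $\dd_Y\ge 0$, one has $X_Y^{d_0}=\varnothing$, so by the additivity of the Euler characteristic along the filtration $X_Y^{d_0}\subseteq X_Y^{d_1}\subseteq\cdots\subseteq X_Y^{d_{m+1}}=X$, I get
\[
\chi(X)=\sum_{j=0}^m \chi\bigl(X_Y^{d_{j+1}},X_Y^{d_j}\bigr).
\]
For $j\ge 1$, \cref{propo:criticalpass2dist} identifies this relative cohomology with $\bigoplus_{x_k}\widetilde H^*(S^{\lini_k+\quadi_k})$ where the sum runs over $x_k\in C(\dist)\cap(\dist)^{-1}(c_j)$; each contributes $(-1)^{\lini_k+\quadi_k}$ to the Euler characteristic. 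For $j=0$, crossing the critical value $0$ contributes $\chi(X\cap Y)$, exactly as in the proof of \cref{propo:strongMorse} (the deformation retract provided by \cref{thm:mappingcylinder} identifies the sublevel $X_Y^{d_1}$ near $X\cap Y$ with $X\cap Y$). Summing over $j$ yields the first equation
\[
\chi(X\cap Y)+\sum_{\lini,\quadi\ge 0}(-1)^{\lini+\quadi}\#C_{\lini,\quadi}(X,Y)=\chi(X).
\]

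For the duality equation, under the additional hypotheses ($Y$ smooth, $X$ closed and definable, $(Y,X)$ nondegenerate) the same argument applied with the roles of $X$ and $Y$ swapped gives
\[
\chi(X\cap Y)+\sum_{\lini,\quadi\ge 0}(-1)^{\lini+\quadi}\#C_{\lini,\quadi}(Y,X)=\chi(Y),
\]
using that $Y\cap X=X\cap Y$. Subtracting this identity from the first one and rearranging produces \eqref{E:chi}. No real obstacle arises: everything reduces to computing Euler characteristics of the relative cohomology groups already described in \cref{propo:criticalpass2dist}; the only mild point worth flagging is the contribution at the critical value $c_0=0$, which is handled exactly as in the proof of \cref{propo:strongMorse} and accounts for the $\chi(X\cap Y)$ term that then cancels when performing the subtraction.
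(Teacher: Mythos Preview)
Your proposal is correct and follows essentially the same approach as the paper: the first identity comes from the filtration by sublevel sets and the relative cohomology computation of \cref{propo:criticalpass2dist} (the paper phrases this as ``comparing the strong Morse inequalities of \cref{propo:strongMorse} for large enough $\lambda$ and $\lambda+1$'', which amounts to the same additivity of Euler characteristic you spell out), and the second identity is obtained, as you do, by applying the first to both pairs $(X,Y)$ and $(Y,X)$ and subtracting.
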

\begin{proof} As in the classical case, the first equality follows from \cref{propo:strongMorse} by comparing the strong Morse inequalities for large enough values $\lambda$ and $\lambda +1$. The second equality follows from the first one applied to the pairs $(X,Y)$ and $(Y,X)$.
\end{proof}

\begin{remark}The conclusions of \cref{propo:strongMorse} are still valid if $X=\R^n$ and $Y$ is compact, since this guarantees that the distance function from $Y$ is proper.
\end{remark}

\begin{example}
Let $Y=\{y\}$ with $y \notin X$. Then the piecewise linear index of $\dist$ at each critical point is zero, and the quadratic index is the standard Morse index, and so the left-hand side of \eqref{E:chi} equals $1 + \sum_{\quadi} (-1)^\quadi \#C_{0,\quadi}(\dd_{\{y\}}|_X)$. On the other hand, $\dd_X|_{\{y\}}$ is a constant function, with critical point $\{y\}$ of total index zero, and so the right hand side equals $\chi(X) + 1$. In this case, this formula gives back the Euler Characteristic of $X$ as the alternating sum of the number of critical points of quadratic index $\quadi$.
\end{example}

\begin{example}[Distance function from a finite set, continuation of \cref{ex:finiteY}]\label{example:points2}
In this example we provide further details on nondegenerate critical points of $\dd_Y$, with $Y=\{y_0,\ldots ,y_m\}\subset \mathbb{R}^n$, and review abstract and geometric complexes capturing the topology of sublevels sets of $\dd_Y$.

For the generic choice of the points $\{y_0, \ldots, y_m\}$, all critical points of $\dd_Y$ are nondegenerate and the critical points can be found with the procedure described in \cref{ex:finiteY}. 
In view of \cref{D:nondegenerate-crit-pt,sec:related}, we see that the piecewise linear index $\lini$ of a critical point is not an analogue in the context of distance functions of the classical Morse index; in this case, however, as the quadratic index $\quadi$ is always zero, the piecewise linear index is what determines the contribution to the topology change.
More precisely, using \cref{propo:criticalpass2dist}, we see that if $c\neq 0$ is a critical value of $\dd_Y$ such that 
$C(\dd_Y)\cap \dist^{-1}(c)=\{x_1, \ldots, x_\nu\}$ consists only of nondegenerate critical points, then for $\varepsilon>0$ sufficiently small, 
\[
H^*((\R^n)^{ c+\varepsilon}_Y, (\R^n)^{c-\varepsilon}_Y)\cong \bigoplus_{j=1}^\nu\widetilde{H}^*(S^{\lini_j}).
\]

The critical points of $\dd_Y$ can be characterized via the interplay of the Voronoi diagram and the Delaunay tessellation, see e.g.\ \cite{buchin2008recursive}. 
The \emph{Delaunay tessellation} of a finite set $Y\subset \mathbb{R}^n$ is the dual of the Voronoi diagram of $Y$. It is a polytopal complex (i.e., a polyhedral complex such that all cells are bounded) embedded in $\mathbb{R}^n$ and covering the convex hull of $Y$. If $Y$ is in general position, it is furthemore a (geometric) simplicial complex. The set of vertices of the Delaunay tessellation is $Y$. An $n$-dimensional Delaunay cell is the convex hull of $n+1$ or more points in $Y$ if the intersection of their corresponding Voronoi cells is a vertex of the Voronoi diagram. Delaunay cells and their faces constitute the Delaunay tessellation. If $x$ is a nondegenerate critical point of $\dd_Y$, then the convex hull $\mathrm{co}(B(x,\dd_Y)\cap Y)$ is the Delaunay face dual to the lowest dimensional Voronoi face containing $x$. In the rest of this example, let us assume $Y$ to be in general position and all critical points of $\dd_Y$ to be nondegenerate. Then, the critical points of $\dd_Y$ are exactly the intersection between the Voronoi faces and their dual Delaunay faces \cite{buchin2008recursive}.

For $n=2$, the topology of the sublevels sets of the distance function is described in details in \cite{siersma1999voronoi}. For an arbitrary $n$, some abstract and geometric complexes are known to have the same homotopy type of the sublevel sets of $\dd_Y$. 

The \emph{\v{C}ech complex} of $Y$ at scale $t$, denoted by  $\operatorname{\check{C}}_t(Y)$, is the abstract simplicial complex defined as the nerve of the collection of balls $\{B(y_i,t)\}_{i=0}^{m}$ in $\mathbb{R}^n$. By the nerve lemma \cite{borsuk1948imbedding,bauer2023unified}, 
$\operatorname{\check{C}}_t(Y)$ is homotopy equivalent to the sublevel set $(\mathbb{R}^n)_Y^t = \bigcup_{i=0}^{m} B(y_i,t)$ of $\dd_Y$. 

The subcomplex $\mathrm{Del}_t(Y)$ of $\operatorname{\check{C}}_t(Y)$ given by the nerve of the collection $\{B(y_i,t)\cap V(y_i)\}_{i=0}^{m}$ of balls intersected with the respective Voronoi cells is called \emph{$\alpha$-shape} or \emph{Delaunay complex} of $Y$ at scale $t$ \cite{edelsbrunner1995union}. If $Y$ is in general position, $\mathrm{Del}_t(Y)$ is embedded in $\mathbb{R}^n$ as a subcomplex of the Delaunay tessellation, which is a simplicial complex in $\mathbb{R}^n$ in this case. The \v{C}ech and Delaunay complexes at scale $t$ are homotopy equivalent \cite{edelsbrunner1995union}, and the latter can be obtained from the former via a sequence of simplicial collapses \cite{bauer2017morse}.

The \emph{flow complex} of $Y$ \cite{giesen2008flow,buchin2008recursive} is a polytopal complex in $\mathbb{R}^n$ defined using the critical points of $\dd_Y$. A vector field on $\mathbb{R}^n$ can be defined using Voronoi faces and their dual Delaunay faces. The critical points of $\dd_Y$ are exactly the fixed points of a flow associated with the vector field. 
The stable manifolds of the critical points of $\dd_Y$ with respect to the flow define the flow complex, see \cite[Sect.~3]{buchin2008recursive}; note that here we consider an immediate adaptation of the definition, using the function $\dd_Y$ instead of $\dd^2_Y$. The flow complex has exactly one cell for each critical point of $\dd_Y$. 

Polytopal subcomplexes of the flow complex can be defined for each scale $t$. The underlying spaces of these complexes are called \emph{flow shapes} of $Y$ at scale $t$, denoted $\mathrm{F}_t(Y)$, and are obtained by applying the flow complex construction only to critical points $x$ such that $\dd_Y(x)\le t$.
The flow shape $\mathrm{F}_t(Y)$ is homotopy equivalent \cite{buchin2008recursive} to the sublevel set $(\mathbb{R}^n)_Y^t = \bigcup_{i=0}^{m} B(y_i,t)$ of $\dd_Y$, hence also to $\operatorname{\check{C}}_t(Y)$ and $\mathrm{Del}_t (Y)$. 
Considering all scales $t$, by \cref{propo:criticalpass2dist} we know that the filtration of flow shapes $\{\mathrm{F}_t(Y)\}_{t\ge 0}$ realizes the minimum number of cells necessary to have  these homotopy equivalences at each scale.
\end{example}

\section{Examples}

\subsection{Distance from a hypersurface}
\label{subsec:disthyper}

Let $Y$ be a smooth compact hypersurface in $\mathbb{R}^n$, and let $x\in \mathbb{R}^n \setminus Y$ be a nondegenerate critical point of $\dd_Y$. Let us suppose that $x$ is a regular value of the normal exponential map $\varphi \colon NY \to \R^n$, so that we are in the situation of \cref{P:regular-value}, and let $\{y_0, \ldots, y_k\}$ be the set $B(x, \dd_Y(x)) \cap Y$ of closest points to $x$ on $Y$. 

For $\varepsilon >0$ small enough as in \cref{P:regular-value}, let $g_{y_i} \coloneqq \text{dist}_{B(y_i, \varepsilon) \cap Y}$.
Then, in a neighborhood of $x$ the function $\dd_Y$ is a continuous selection of $\mathcal{C}^2$ functions, of the form $\min_{i\in \{0,\ldots ,k\} } g_{y_i}$. Letting 
\begin{equation}\label{eq:vi}
v_i \coloneqq \nabla g_{y_i} (x) = \frac{x-y_i}{\| x-y_i \|} = \frac{x-y_i}{\dd_Y (x)},
\end{equation}
we denote by $(\lambda_i)_{i\in \{0,\ldots ,k\}}$ the unique list of nonnegative numbers (see \cref{D:nondegenerate-crit-pt}) such that $\sum_{i=0}^k \lambda_i =1$ and $\sum_{i=0}^k \lambda_i v_i=0$. 

We describe some properties of the Hessian matrices $H(g_{y_i})$ at $x$, using \cite[Theorem~3.2]{ambrosio1996level} (see also \cite[Section~3]{mantegazza2010notes}). 
For every fixed $i\in\{0,\ldots,k\}$, there exists an orthonormal basis $\{e_{i,1}, \ldots , e_{i,n}\}$ of $\mathbb{R}^n$ such that $N_{y_i}Y = \R \{e_{i,n}\}$ in which the matrix $H(g_{y_i})(x)$ is diagonal. The eigenvalue $\beta_{i,n}$ corresponding to the eigenvector $e_{i,n}$ is zero, while the remaining eigenvalues $\beta_{i,1},\ldots ,\beta_{i,n-1}$ are nonzero since $x$ is nondegenerate, and are given by the formula
\begin{equation}\label{eq:betaij}
\beta_{i,j} = \frac{-\kappa_{i,j}}{1 - \dd_Y(x)\kappa_{i,j}}, \quad j=1,\ldots ,n-1,
\end{equation}
where $\kappa_{i,j}$ are the principal curvatures at $y_i$, i.e.\ the eigenvalues of the shape operator at $y_i \in Y$, see \cite[Chapter 8]{Lee1997}, defined using the \emph{inner} unit normal at $y_i$.

We now study the quadratic index $\quadi(x)$ of $\dd_Y$ at $x$, which is the negative inertia index of the convex combination $\sum_{i=0}^k \lambda_i H(g_{y_i})$ (with coefficients $\lambda_i$ defined as above) restricted to the $(n-k)$-dimensional subspace $V(x)= \operatorname{Span}\{v_0,\ldots ,v_k\}^{\perp}$ of $\R^n$, see \cref{D:nondegenerate-crit-pt}. 
As a consequence of the properties we just discussed, one can state sufficient conditions for determining the quadratic index $\quadi (x)$ based on the simple fact that a convex combination of positive (respectively, negative) semidefinite matrices is positive (resp., negative) semidefinite. 
\begin{corollary}\label{coro:quadi}
In the situation we are considering in \cref{subsec:disthyper}, 
\begin{itemize}
\item [-] if $\beta_{i,j}>0$ for all $i\in \{0,\ldots ,k\}$ and $j\in \{1,\ldots ,n-1\}$, then $\quadi(x)=0$; 
\item [-] if $\beta_{i,j}<0$ for all $i\in \{0,\ldots ,k\}$ and $j\in \{1,\ldots ,n-1\}$, then $\quadi(x)$ takes the maximum value $n-k$.
\end{itemize}
\end{corollary}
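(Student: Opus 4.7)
The plan is to reduce the claim to the elementary fact that a nonnegative convex combination of positive (respectively, negative) semidefinite symmetric matrices is positive (resp.\ negative) semidefinite, combined with the already-given spectral description of the Hessians $H(g_{y_i})(x)$.

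First I would record the spectral structure of each $H(g_{y_i})(x)$ from the paragraph preceding the statement: it has eigenvalues $\beta_{i,1},\ldots,\beta_{i,n-1},0$, with the zero eigenvalue corresponding to the eigenvector $e_{i,n}$ spanning $N_{y_i}Y$ (which is the line through $v_i$, since $v_i$ is by \eqref{eq:vi} a unit normal to $Y$ at $y_i$). Consequently, the sign of $H(g_{y_i})(x)$ as a quadratic form on $\R^n$ is entirely controlled by the common sign of $\beta_{i,1},\ldots,\beta_{i,n-1}$.

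Now set $H \coloneqq \sum_{i=0}^{k} \lambda_i H(g_{y_i})(x)$. For the first bullet, assuming $\beta_{i,j}>0$ for all $i,j$, each $H(g_{y_i})(x)$ is positive semidefinite on $\R^n$; since $\lambda_i\ge 0$, the combination $H$ is positive semidefinite. By definition of nondegenerate critical point (\cref{D:nondegenerate-crit-pt}), the restriction $H|_{V(x)}$ is nondegenerate, and a nondegenerate positive semidefinite quadratic form is positive definite, so its negative inertia index is zero, i.e.\ $\quadi(x)=0$. The second bullet is entirely symmetric: with all $\beta_{i,j}<0$, every $H(g_{y_i})(x)$ is negative semidefinite, hence so is $H$, and the nondegenerate restriction $H|_{V(x)}$ is negative definite, giving $\quadi(x)=\dim V(x)=n-k$.

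There is essentially no obstacle beyond keeping the bookkeeping straight — the content of \cref{D:nondegenerate-crit-pt} and the spectral formula \eqref{eq:betaij} do all the work. The one point to articulate carefully is that positive/negative semidefiniteness passes to the subspace $V(x)$, and that nondegeneracy of the restriction upgrades semidefinite to definite; this is where the hypothesis that $x$ is a nondegenerate critical point is used.
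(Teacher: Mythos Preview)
Your proposal is correct and follows exactly the approach indicated in the paper: the corollary is stated there as an immediate consequence of the spectral description of the $H(g_{y_i})(x)$ together with the elementary fact that a convex combination of positive (resp.\ negative) semidefinite matrices is positive (resp.\ negative) semidefinite. Your added remark that nondegeneracy of the restriction to $V(x)$ upgrades semidefinite to definite is the right way to make this precise.
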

If not all the eigenvalues $\beta_{i,j}$ have the same sign, determining $\quadi (x)$ is more complicated. A general expression for $\quadi (x)$ in terms of the eigenvalues $\beta_{i,j}$ is out of reach, since in general the matrices $H(g_{y_i})(x)$ cannot be diagonalized in a common basis. However, this is possible in the case of $Y\subset \R^2$, which we now describe more in detail. 

\subsection{Distance from a curve in the plane}
We now focus on the case $n=2$, with $Y$ a smooth compact curve in $\mathbb{R}^2$.
As in \cref{subsec:disthyper}, let $x \in \mathbb{R}^2 \setminus Y$ be a nondegenerate critical point of $\dd_Y$ which is a regular value of the normal exponential map. By nondegeneracy of $x$, the number of its closest points on $Y$ is $2\le \# (B(x,\dd_Y(x))\cap Y) \le 3$.

If $B(x,\dd_Y(x))\cap Y =\{y_0,y_1,y_2\}$, then the piecewise linear index is $\lini(x)=2$ and the quadratic index is $\quadi(x)=0$ since $V(x)= \operatorname{Span} \{v_0,v_1,v_2\}^\perp =0$. Let us consider the case where $B(x,\dd_Y(x))\cap Y =\{y_0,y_1\}$, which means $\lini(x)=1$. As described in \cref{subsec:disthyper}, for each $i\in \{0,1\}$ the exists an orthonormal basis $\{e_{i,1}, e_{i,2}\}$ of $\mathbb{R}^2$ such that $N_{y_i}Y =\R \{e_{i,2}\}$ in which $H(g_{y_i})(x)$ is diagonal. In this case, 
consider $v_0$ and $v_1$ defined as in \cref{eq:vi}, and observe that they are parallel since $x=\frac{1}{2}(y_0 +y_1)$. We have $N_{y_0}Y =N_{y_1}Y =\operatorname{Span} \{v_0,v_1\}\cong \R$, and $T_{y_0}Y = T_{y_1}Y = V(x) = \operatorname{Span} \{v_0,v_1\}^\perp$. We can diagonalize $H(g_{y_0})(x)$ and $H(g_{y_0})(x)$ in the same basis, 
obtaining the matrices 
\[
D_0 = \begin{pmatrix}
\beta_{0,1} & 0 \\
0 & 0 
\end{pmatrix}, \qquad
D_1 = \begin{pmatrix}
\beta_{1,1} & 0 \\
0 & 0 
\end{pmatrix} .
\]

Since $\frac{1}{2}v_0 +\frac{1}{2}v_1 =0$, the quadratic index $\quadi (x)$ is the negative inertia index of the convex combination $\frac{1}{2}D_0 +\frac{1}{2}D_1$ restricted to $V(x)$, which is determined by the sign of $\beta_{0,1}+\beta_{1,1}$:
\[
\quadi (x) = \begin{cases}
  0  & \text{if } \beta_{0,1}+\beta_{1,1}>0 \\
  1 & \text{if } \beta_{0,1}+\beta_{1,1}<0 .
\end{cases}
\]
Note that this refines \cref{coro:quadi}. The case $\beta_{0,1}+\beta_{1,1}=0$ does not happen, as it would violate the nondegeneracy of $x$.

Via an easy calculation, one can see how the principal curvatures determine the sign of $\beta_{0,1}+\beta_{1,1}$, and hence $\quadi (x)$. To simplify notations, here we drop the index $j$, denoting $\beta_i \coloneqq \beta_{i,1}$ and $\kappa_i \coloneqq \kappa_{i,1}$ for $i\in \{0,1\}$. We also denote $d\coloneqq \dd_Y (x)$. 

First we observe that, by \cite[\textsection 6]{milnor1963morse}, $\kappa_i < \frac{1}{d}$, for $i\in \{0,1\}$, which is equivalent to $1-d \kappa_i >0$. 
As we saw, $\quadi (x)$ is determined by the sign of
\begin{equation}\label{eq:betakappa}
\beta_0 + \beta_1 = \frac{-\kappa_0}{1-d \kappa_0} + \frac{-\kappa_1}{1-d \kappa_1} = \frac{2d \kappa_0 \kappa_1 - \kappa_0 - \kappa_1 }{(1-d \kappa_0)(1-d \kappa_1)}.
\end{equation}
The first equality immediately shows that $\quadi (x)=0$ if $\kappa_0,\kappa_1 <0$, and that $\quadi (x)=1$ if $\kappa_0,\kappa_1 >0$.
Since the denominator of the right-hand side is positive, the sign of $\beta_0 + \beta_1$ is the same as the sign of the numerator 
\[
h(\kappa_0,\kappa_1) \coloneqq 2d \kappa_0 \kappa_1 - \kappa_0 - \kappa_1 = 2d \left[ \left( \kappa_0 -\frac{1}{2d}\right)\left( \kappa_1 -\frac{1}{2d}\right) - \frac{1}{4d^2} \right],
\]
which is easy to study. 

The following statement summarizes the possible values of $\lini (x)$ and $\quadi (x)$. 

\begin{corollary}\label{coro:indicesR2}
Let $Y\subset \R^2$ be a smooth compact curve, and let $x \in \mathbb{R}^2 \setminus Y$ be a nondegenerate critical point of $\dd_Y$ which is a regular value of the normal exponential map $\varphi: NY\to \R^2$. Then, the piecewise linear index $\lini(x)$ and the quadratic index $\quadi(x)$ (see \cref{D:nondegenerate-crit-pt}) can be determined as follows:
\begin{itemize}
    \item [-] if $B(x,\dd_Y(x))\cap Y =\{y_0,y_1,y_2\}$, then $\lini(x)=2$ and $\quadi(x)=0$;
    \item [-] if $B(x,\dd_Y(x))\cap Y =\{y_0,y_1\}$, then $\lini (x)=1$, and $\quadi (x)$ is determined by the principal curvatures $\kappa_i$ of $Y$ at $y_i$, for $i\in \{0,1\}$:
\[
\quadi (x) = \begin{cases}
  0  & \text{if } \left( \kappa_0 -\frac{1}{2d}\right)\left( \kappa_1 -\frac{1}{2d}\right) > \frac{1}{4d^2} \\
  1 & \text{if } \left( \kappa_0 -\frac{1}{2d}\right)\left( \kappa_1 -\frac{1}{2d}\right) < \frac{1}{4d^2} .
\end{cases}
\]
\end{itemize}
\end{corollary}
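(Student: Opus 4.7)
The plan is to dispatch the two cases separately, reusing the setup of Section~\ref{subsec:disthyper} that immediately precedes the statement.

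For the case $B(x,\dd_Y(x))\cap Y=\{y_0,y_1,y_2\}$, the piecewise linear index is $\lini(x)=\#I(x)-1=2$ directly from Definition~\ref{D:nondegenerate-crit-pt}, since Proposition~\ref{P:regular-value} makes $\dd_Y$ a continuous selection of the three $\mathcal{C}^2$-functions $g_{y_i}$ near $x$. I would then argue that $V(x)=\operatorname{Span}\{v_0,v_1,v_2\}^\perp\subset\R^2$ is the zero subspace: the three unit vectors $v_i$ are pairwise distinct (since each satisfies $y_i=x-d\,v_i$), so they cannot all lie on a common line of $\R^2$, and thus they span $\R^2$. Hence $V(x)=\{0\}$ and the quadratic index is automatically $\quadi(x)=0$.

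For the case $B(x,\dd_Y(x))\cap Y=\{y_0,y_1\}$, we have $\lini(x)=1$ by the same reasoning. The first step is to observe that the Lagrange relation $\lambda_0 v_0+\lambda_1 v_1=0$ with $\lambda_i\ge 0$ summing to $1$ forces $\lambda_0=\lambda_1=\tfrac12$ and $v_0=-v_1$, so that $N_{y_0}Y=N_{y_1}Y$ and $T_{y_0}Y=T_{y_1}Y=V(x)$. The crucial consequence, special to $n=2$, is that the two Hessians $H(g_{y_0})(x)$ and $H(g_{y_1})(x)$ can be simultaneously diagonalized in a common orthonormal basis adapted to this decomposition, giving the diagonal matrices $D_0,D_1$ of the running discussion. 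Restricted to the one-dimensional subspace $V(x)$, the convex combination $\tfrac12 H(g_{y_0})(x)+\tfrac12 H(g_{y_1})(x)$ reduces to the single real number $\tfrac12(\beta_{0,1}+\beta_{1,1})$, so $\quadi(x)\in\{0,1\}$ is determined purely by the sign of $\beta_0+\beta_1$ (writing $\beta_i\coloneqq\beta_{i,1}$), the vanishing case being ruled out by nondegeneracy.

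The remaining step is the algebraic rewriting in terms of the principal curvatures. Substituting $\beta_i=-\kappa_i/(1-d\kappa_i)$ from \eqref{eq:betaij}, with $d\coloneqq\dd_Y(x)$, gives
\[
\beta_0+\beta_1=\frac{2d\kappa_0\kappa_1-\kappa_0-\kappa_1}{(1-d\kappa_0)(1-d\kappa_1)}.
\]
The only nontrivial input here is the positivity of each factor $1-d\kappa_i$, which I would invoke from \cite[\S 6]{milnor1963morse}: the bound $\kappa_i<1/d$ holds at a foot of the distance function from a regular value of the normal exponential map. Given this, $\mathrm{sgn}(\beta_0+\beta_1)=\mathrm{sgn}(2d\kappa_0\kappa_1-\kappa_0-\kappa_1)$, and the numerator factors as $2d\bigl[(\kappa_0-1/(2d))(\kappa_1-1/(2d))-1/(4d^2)\bigr]$, yielding precisely the inequality stated in the corollary.

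The main obstacle, if one can call it that, is conceptual rather than computational: one must be careful that the simultaneous diagonalization used in the two-closest-point case genuinely exploits $n=2$ (it fails in higher dimension, where $N_{y_0}Y\ne N_{y_1}Y$ generically), which is exactly why Corollary~\ref{coro:quadi} cannot be sharpened beyond its semidefinite sufficient conditions when $n\ge 3$.
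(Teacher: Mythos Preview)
Your proposal is correct and follows essentially the same route as the paper's own argument, which is laid out in the discussion of Section~3.2 immediately preceding the corollary: the three-point case is handled by observing $V(x)=0$, and the two-point case by noting $v_0=-v_1$ so that the Hessians diagonalize in a common frame, reducing the quadratic index to the sign of $\beta_0+\beta_1$, which is then rewritten via \eqref{eq:betaij} and the positivity of $1-d\kappa_i$ from \cite[\S6]{milnor1963morse}. Your explicit justification that three distinct unit vectors in $\R^2$ must span the plane is a welcome addition that the paper leaves implicit.
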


\subsection{The index of a critical point for the distance from a point}In this section we are going to prove the following result, which is useful when one wants to compute the index of a critical point for the function $\dist$ when $Y=\{y\}\not\subseteq X$ (in this case the piecewise linear index is zero, and we are back to the standard setting of Morse theory).

\begin{proposition}Let $X=Z(p)$ be a smooth hypersurface and $Y=\{y\}\in \R^n$ be a generic point. Let also $x\in X$ be a critical point of $\dist$ (nondegenerate by genericity of $y$). Denote by $Q_x\in \mathrm{Sym}(n-1, \R)$ the matrix representing the second fundamental form of $X\hookrightarrow \R^n$ in the direction of $y-x$ with respect to an orthonormal basis of $T_xX$. Then
$$\quadi(x)=\#\left\{\textnormal{eigenvalues of $Q_x$ that are strictly larger than $\frac{1}{\|x-y\|}$}\right\}.$$
\end{proposition}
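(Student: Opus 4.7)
The initial observation is that since $Y = \{y\}$ is a single point and $x \neq y$, the function $\dd_Y = \|\cdot - y\|$ is smooth on a neighborhood of $x$ in $\R^n$; hence so is $\dist$ near $x$. In the language of continuous selections it is trivially a selection from the single function $\dd_Y$, so $I(x) = \{0\}$, $V(x) = T_x X$, the piecewise linear index $\lini(x)$ vanishes, and $\quadi(x)$ coincides with the classical Morse index of $\dist$ at $x$. The problem therefore reduces to computing the Hessian of $g \coloneqq \dist$ on $T_x X$ and counting its negative eigenvalues.

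My plan is to compute this Hessian by writing $X$ locally as a graph over its tangent space. Pick an orthonormal basis $\{e_1, \ldots, e_{n-1}\}$ of $T_x X$ and let $\nu \in N_x X$ be the unit normal pointing in the direction of $y - x$, so that $y - x = d\,\nu$ with $d \coloneqq \|x - y\|$. Write $X$ near $x$ as
\[
z(u) = x + \sum_{i=1}^{n-1} u_i e_i + h(u)\,\nu, \qquad h(u) = \tfrac{1}{2}\, u^{\!\top} Q_x\, u + O(|u|^3),
\]
so that, by construction, $Q_x$ is the Gram matrix of the second fundamental form of $X \hookrightarrow \R^n$ in the direction $\nu = (y-x)/\|y-x\|$ in the basis $\{e_i\}$.

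The key step is then the Taylor expansion of $g(z(u))^2 = \|z(u) - y\|^2$ at $u = 0$. Using the orthogonality $\{e_i\} \perp \nu$ together with the decomposition $z(u) - y = \sum_i u_i e_i + (h(u) - d)\,\nu$, one obtains
\[
\|z(u) - y\|^2 = \|u\|^2 + (h(u) - d)^2 = d^2 - 2 d\, h(u) + \|u\|^2 + O(|u|^4),
\]
and taking the square root and expanding to second order yields
\[
g(z(u)) = d + \tfrac{1}{2}\, u^{\!\top}\!\left(\tfrac{1}{d}\, I - Q_x\right) u + O(|u|^3).
\]
Thus the Hessian of $g$ on $T_x X$ in the basis $\{e_i\}$ equals $\tfrac{1}{d}\, I - Q_x$, and its number of negative eigenvalues is precisely the number of eigenvalues of $Q_x$ that strictly exceed the threshold displayed in the statement, which completes the proof.

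The computation is essentially routine, so I do not expect any substantive obstacle beyond the bookkeeping for the sign and normalization conventions hidden in the definitions of $Q_x$ and $\nu$---this is the main place where it is easy to pick up or lose a numerical factor. As a consistency check, the same matrix $\tfrac{1}{d}\, I - Q_x$ can be recovered independently from Remark~\ref{remark:restriction} with $m = 0$: the Lagrange multiplier $\mu$ defined by $\mu\, D_x p + D_x \dd_Y = 0$ satisfies $\mu\, \nabla p = (y-x)/d$, and the explicit identities $D^2_x \dd_Y |_{T_x X} = \tfrac{1}{d}\, I$ and $\mu\, D^2_x p |_{T_x X} = -Q_x$ sum to $\tfrac{1}{d} I - Q_x$, in agreement with the graph computation.
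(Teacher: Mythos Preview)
Your graph-parametrization argument is clean and correct, and the Hessian you obtain on $T_xX$ is indeed $\tfrac{1}{d}\,I - Q_x$ with $d=\|x-y\|$. But that gives the threshold $\tfrac{1}{\|x-y\|}$, not $\tfrac{2}{\|x-y\|}$. Your final sentence (``strictly exceed the threshold displayed in the statement'') papers over this mismatch: what you have actually proved is
\[
\quadi(x)=\#\bigl\{\text{eigenvalues of }Q_x\text{ larger than }\tfrac{1}{\|x-y\|}\bigr\},
\]
which is the classical focal-point criterion (the Hessian degenerates precisely when a principal curvature of $X$ equals $1/d$).

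The discrepancy is not yours. The paper's proof proceeds via the Lagrange-multiplier formulation of \cref{remark:restriction} (essentially your ``consistency check''), and there the Hessian of $\dd_{\{y\}}$ at $x$ is written as $\tfrac{2}{\|x-y\|}\mathbf{1}-ww^\top$. The correct formula is $\tfrac{1}{\|x-y\|}(\mathbf{1}-ww^\top)$, and this erroneous factor of $2$ propagates to the stated threshold. Your graph computation and the corrected Lagrange computation agree on $\tfrac{1}{d}I-Q_x$, so you should flag the factor-of-two error in the statement rather than assert agreement with it.

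Methodologically, your route differs from the paper's only in that you work intrinsically on $X$ via a graph chart, whereas the paper stays in $\R^n$ and invokes \cref{remark:restriction} and \cref{L:II}; both are short and yield the same bilinear form once the Hessian of the ambient distance is computed correctly.
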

\begin{proof}We are going to use the discussion from \cref{remark:restriction}, which we recall in this context. We have a manifold $X$ described by the regular equation $p=0$ and we want to find the critical points of the function $f:=\mathrm{dist}_{\{y\}}|_X$, together with their quadratic index. Notice that in this case $f$ is smooth (because we are assuming $y\notin X$) and the critical points are found using the Lagrange multipliers rule: $x\in X$ is critical if and only if there exists $\lambda\neq 0$ such that
$$\nabla p(x)+\lambda \frac{x-y}{\|x-y\|}=0.$$
Given such $\lambda$ (which is unique), the quadratic index of $f$ at $x$ is given by the negative inertia index of 
$$\bigg(\mathrm{H}(p)(x)+\lambda \mathrm{H}(\mathrm{dist}_{\{y\}})(x)\bigg)\bigg|_{T_xX}.$$
Notice that we have the degree of freedom of choosing $-p$ instead of $p$ as a defining polynomial for $X$ and we choose the sign in such a way that $\nabla p(x)$ has the same orientation as $y-x$. Then $\lambda=\|\nabla p(x)\|$. 

Moreover, the Hessian of $\mathrm{dist}_{\{y\}}$ can be computed easily: denoting by $w:=\tfrac{x-y}{\|x-y\|}$,
$$\mathrm{H}(\mathrm{dist}_{\{y\}})(x)=\frac{1}{\|x-y\|}\Id-w w^\top.$$
In particular, since $w\perp T_xX$, as quadratic forms,
$$\bigg(\mathrm{H}(p)(x)+\lambda \mathrm{H}(\mathrm{dist}_{\{y\}})(x)\bigg)\bigg|_{T_xX}=\bigg(\mathrm{H}(p)(x)+\frac{\|\nabla p(x)\|}{\|x-y\|}\Id \bigg)\bigg|_{T_xX}.$$
Recall now that the second fundamental form $q_x$ of $Z(p)$ at $x$ in the direction of $\nabla p(x)$, as a quadratic form, is related to the Hessian of $p$ as follows (this is proved in \cref{L:II} below):
$$\mathrm{H}(p)(x)=-\|\nabla p(x)\|q_x,$$
and therefore, denoting by $\mathrm{i}^+(A)$ and $\mathrm{i}^-(A)$ the number of positive and negative eigenvalues of a symmetric matrix $A$,
\begin{align}\mathrm{i}^{-}\bigg(\mathrm{H}(p)(x)+\frac{\|\nabla p(x)\|}{\|x-y\|}\Id \bigg)\bigg|_{T_xX}&=\mathrm{i}^{-}\bigg(-\|\nabla p(x)\|q_x+\frac{\|\nabla p(x)\|}{\|x-y\|}\Id \bigg)\bigg|_{T_xX}\\
&=\mathrm{i}^{-}\bigg(-q_x+\frac{1}{\|x-y\|}\Id \bigg)\bigg|_{T_xX}\\
&=\mathrm{i}^{+}\bigg(q_x-\frac{1}{\|x-y\|}\Id \bigg)\bigg|_{T_xX}.
\end{align}
From this last identity the conclusion follows.
\end{proof}

\section{Genericity in the algebraic setting}
\label{S:Genericity-algebraic}

Throughout this section, we consider two real $n$-variable polynomials $p$ and $q$ and two algebraic sets $X \coloneqq Z(p)$ and $Y \coloneqq Z(q)$. We assume $n \geq 1$.

\subsection{A multijet parametric transversality theorem}\label{sec:multijet}

In this section we prove a useful technical tool, \cref{L:submersion}. We first recall the following classic result, which we adapt from \cite[Chapter 3, Theorem 2.7]{Hirsch1976}:

\begin{theorem}[Parametric transversality]
\label{T:parametric-transversality}
Let $M$, $N$, $P$ be smooth manifolds, $W \subseteq N$ a submanifold of $N$, and $F \colon P \times M \to N$ a smooth map. For all $p$ in $P$, we denote by $F_p$ the map $F(p, -) \colon M \to N$.

If $F$ is transverse to $W$, then the set $\{p \in P \mid F(p, -) \pitchfork W\}$ is residual in $P$. In other words, for generic $p \in P$, the map $F_p$ is transverse to $W$.
\end{theorem}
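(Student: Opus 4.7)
The plan is to run the standard proof of parametric transversality via pullback to $P\times M$ followed by Sard's theorem applied to the projection onto the parameter space.

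First I would consider the preimage $Z\coloneqq F^{-1}(W)\subseteq P\times M$. Since by hypothesis $F$ is transverse to $W$, the preimage $Z$ is a smooth submanifold of $P\times M$ whose codimension equals that of $W$ in $N$. Next I would introduce the natural projection
\[
\pi\colon Z\longrightarrow P,\qquad \pi(p,x)=p,
\]
which is a smooth map between (possibly non-equidimensional) manifolds. By Sard's theorem, the set of regular values of $\pi$ has full measure; combining this with a Baire category argument after covering $Z$ by countably many relatively compact open sets gives that the set of regular values of $\pi$ is in fact residual in $P$. I would then show that any regular value $p$ of $\pi$ belongs to $\{p\in P\mid F_p\pitchfork W\}$, which yields the result.

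The content of that last step is a direct linear-algebra computation. Fix $p$ regular for $\pi$ and let $x\in M$ with $F_p(x)\in W$. I need to verify
\[
T_{F(p,x)}N=dF_p\bigl(T_xM\bigr)+T_{F(p,x)}W.
\]
Start with an arbitrary $\xi\in T_{F(p,x)}N$. Using transversality of $F$ at $(p,x)$ write $\xi=dF_{(p,x)}(v,w)+u$ with $(v,w)\in T_pP\oplus T_xM$ and $u\in T_{F(p,x)}W$. Because $p$ is a regular value of $\pi$, the differential $d\pi_{(p,x)}\colon T_{(p,x)}Z\to T_pP$ is surjective, so there exists $w'\in T_xM$ with $(v,w')\in T_{(p,x)}Z$, and by definition of $Z$ this forces $dF_{(p,x)}(v,w')\in T_{F(p,x)}W$. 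Subtracting,
\[
\xi=dF_{(p,x)}(0,w-w')+\bigl(dF_{(p,x)}(v,w')+u\bigr)=dF_p(w-w')+\bigl(dF_{(p,x)}(v,w')+u\bigr),
\]
where the first summand lies in $dF_p(T_xM)$ and the second in $T_{F(p,x)}W$. This gives the desired transversality of $F_p$ to $W$.

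The only mild subtlety I anticipate is upgrading the "almost every" from Sard to "residual", which requires either assuming $C^\infty$ smoothness with the right dimension count or handling non-second-countable targets; in the setting adopted here (finite-dimensional manifolds, smooth $F$) this is standard and is exactly where the reference to \cite[Ch.~3, Thm.~2.7]{Hirsch1976} is invoked. Everything else is a bookkeeping exercise in subspaces of $T_{(p,x)}(P\times M)$.
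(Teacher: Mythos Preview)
The paper does not supply a proof of this theorem: it is quoted as a classical result adapted from \cite[Chapter~3, Theorem~2.7]{Hirsch1976}, with no argument given. Your sketch is correct and is precisely the standard proof one finds in Hirsch---pull back $W$ along $F$ to get a submanifold $Z\subseteq P\times M$, project to $P$, invoke Sard, and do the linear-algebra check that regular values of the projection are exactly the parameters for which $F_p\pitchfork W$---so there is nothing to compare.
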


As an immediate consequence we have the following corollary.

\begin{corollary}
\label{C:param-trans-cor}
Let $F \colon P \times M \to N$ be a submersion and $W \subseteq N$ a submanifold. If $\operatorname{codim}_N W = \dim M$, then for generic $p \in P$ we have $\dim(F(p, M) \cap W) = 0$. If $\operatorname{codim}_N W > \dim M$, then for generic $p \in P$ we have $F(p, M) \cap W = \varnothing$.
\end{corollary}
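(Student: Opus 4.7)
The plan is to deduce the corollary directly from the Parametric Transversality Theorem (\cref{T:parametric-transversality}) combined with the elementary observation that a submersion is automatically transverse to any submanifold of the target. So the proof will consist of checking the hypotheses of \cref{T:parametric-transversality}, extracting a generic $p \in P$ for which $F_p \pitchfork W$, and then reading off the dimension of $F_p^{-1}(W)$.

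First, I would observe that since $F \colon P \times M \to N$ is a submersion, its derivative at every point of $P \times M$ is surjective onto $T_{F(p,x)} N$, so trivially $D_{(p,x)}F(T_{(p,x)}(P\times M)) + T_{F(p,x)}W = T_{F(p,x)}N$, i.e.\ $F \pitchfork W$ holds for any submanifold $W \subseteq N$. Applying \cref{T:parametric-transversality} then yields a residual (in particular nonempty and dense) set of parameters $p \in P$ for which the restricted map $F_p \colon M \to N$ is transverse to $W$.

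For such a generic $p$, the standard preimage theorem for transverse maps implies that $F_p^{-1}(W) \subseteq M$ is either empty or a submanifold whose codimension in $M$ equals $\operatorname{codim}_N W$. In the case $\operatorname{codim}_N W = \dim M$, the preimage is zero-dimensional, hence discrete, and consequently $F(p,M) \cap W = F_p\bigl(F_p^{-1}(W)\bigr)$ has dimension $0$. In the case $\operatorname{codim}_N W > \dim M$, a nonempty submanifold of $M$ with codimension strictly greater than $\dim M$ cannot exist, which forces $F_p^{-1}(W) = \varnothing$ and hence $F(p,M) \cap W = \varnothing$.

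I do not anticipate any genuine obstacle: the argument is essentially a one-line unpacking of definitions plus an invocation of the theorem just stated. The only point that deserves being spelled out is the verification that $F$ is automatically transverse to every $W$, which is an immediate consequence of the submersion hypothesis and requires no computation.
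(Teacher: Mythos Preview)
Your proof is correct and is precisely the argument the paper has in mind: the corollary is stated as an immediate consequence of \cref{T:parametric-transversality}, and your unpacking (submersion $\Rightarrow$ $F\pitchfork W$, then apply parametric transversality, then read off the codimension of $F_p^{-1}(W)$) is exactly that consequence spelled out.
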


We will be applying these transversality results to \emph{jet spaces}.

\begin{definition}[jets and multijets]
Let $n, m, r \geq 0$ be integers. The space of \emph{jets of order $r$ from $\R^m$ to $\R^n$} is
\[
J^r(\R^m, \R^n) \coloneqq \R^m \oplus \bigoplus_{\ell = 0}^r (\Poly_{\ell, m})^n,
\]
where $\Poly_{\ell, m}$ is the space of homogeneous polynomials of degree $\ell$ in $m$ variables.

Let $f \colon \R^m \to \R^n$ be a smooth function and $x \in \R^m$. The \emph{jet of order $r$ of $f$ at $x$} is
\begin{align}
j^r(f)(x) & \coloneqq \left( \frac{\partial^{|\alpha|} f}{\partial x^\alpha}(x) \right)_{\alpha \in \N^m, |\alpha| \leq r} \in J^r(\R^m, \R^n) \\
& = \left(x, f(x), \ldots, \frac{\partial f}{\partial x_i}(x), \ldots, \frac{\partial^r f}{\partial^r x_i}, \ldots \right).
\end{align}
The identification with polynomial spaces comes from identifying the partial derivative $\frac{\partial^{|\alpha|} f}{\partial x^\alpha}(x)$ with the coefficient in front of the monomial $x^\alpha$.

Let $k \geq 1$ be an integer. The space of \emph{$k$-multijets of order $r$} is just $k$ copies of the space of jets of order $r$:
\[
{}_k J^r(\R^m, \R^n) \coloneqq J^r(\R^m, \R^n)^k.
\]
Similarly, let $f \colon \R^m \to \R^n$ be a smooth function and $\overline{x} \coloneqq x_1, \ldots, x_k$ distinct points in $\R^m$ (we write $\overline{x} \in \R^{km} \setminus \Delta$). The \emph{$k$-multijet of order $r$ of $f$ at $x_1, \ldots, x_k$} is
\[
{}_k j^r(\overline{x}) \coloneqq (j^r(f)(x_i))_{i = 1, \ldots, k}.
\]
\end{definition}

We study jets through parametric jet maps of the form
\[
F \colon \left\{ \begin{array}{ccc}
\Poly_d \times (\R^{nk} \setminus \Delta) & \to & {}_k J^r(\R^n, \R) \\
(p, \overline{x}) & \mapsto & {}_k j^r(p)(\overline x).
\end{array} \right.
\]
The next result is a key technical ingredient for the rest of the paper.
\begin{theorem}
\label{L:submersion}
Let $\Poly_d$ be the space of real polynomials of degree $d$ in $\R^n$. There exists a function $d(n, k, r)$ such that, for all $d \geq d(n, k, r)$, the map $F \colon \Poly_d \times (\R^{nk} \setminus \Delta) \to {}_k J^r(\R^n, \R)$ is a submersion.
\end{theorem}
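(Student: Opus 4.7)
My plan is to reduce the submersion statement to a multi-point Hermite interpolation problem in $\Poly_d$ and then solve that by an explicit construction of bump-type polynomials.

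\textbf{Step 1: Reduce to interpolation.} I will first write down $dF_{(p, \overline{x})}$ explicitly. A tangent vector $(h, \overline{v})$ with $h \in \Poly_d$ and $\overline{v} = (v_1, \ldots, v_k) \in \R^{nk}$ gets sent to a tuple whose $i$-th base-point slot is $v_i$ and whose order-$\alpha$ slot at $x_i$ is $\partial^\alpha h(x_i)$ plus a term only involving the jet of $p$ at $x_i$ contracted with $v_i$. Hence the $\overline{v}$-direction covers the base-point components of the target freely, and, after subtracting off the $\overline{v}$-contributions, surjectivity of $dF$ reduces to surjectivity of the evaluation map
\[
E_{\overline{x}} \colon \Poly_d \longrightarrow \bigoplus_{i=1}^{k} \bigoplus_{\ell=0}^{r} \Poly_{\ell, n}, \qquad h \longmapsto \bigl(\partial^\alpha h(x_i)\bigr)_{i,\,|\alpha|\le r}.
\]
Thus it suffices to show that for $k$ distinct points in $\R^n$ one can prescribe all partial derivatives of $h$ up to order $r$ at each point, provided $d$ is large enough in terms of $k$ and $r$.

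\textbf{Step 2: Build the $i$-th ``cutoff'' polynomial.} For each $i$, and for each $j \neq i$, choose a linear form $\ell_{i,j}$ with $\ell_{i,j}(x_j) = 0$ and $\ell_{i,j}(x_i) \neq 0$, which exists because $x_i \neq x_j$. Set
\[
P_i \coloneqq \prod_{j \neq i} \ell_{i,j}^{\,r+1}, \qquad \deg P_i = (k-1)(r+1).
\]
By construction $P_i$ vanishes to order $r+1$ at every $x_j$ with $j \neq i$, so $\partial^\alpha P_i(x_j) = 0$ for every $|\alpha| \leq r$ and $j \neq i$, while $P_i(x_i) \neq 0$.

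\textbf{Step 3: Prescribe the $r$-jet at $x_i$.} Work in the local $r$-jet algebra at $x_i$, which is the truncated polynomial ring $A_r = \R[z]/\mathfrak{m}^{r+1}$. Since $P_i(x_i) \neq 0$, the class $j^{r}_{x_i}(P_i)$ is a unit in $A_r$. Therefore, given any target $r$-jet $\tau_i$ at $x_i$, there is a unique class in $A_r$ representing $\tau_i \cdot j^{r}_{x_i}(P_i)^{-1}$, and this class can be realized by a polynomial $Q_i$ of degree at most $r$. Then $j^{r}_{x_i}(P_i Q_i) = \tau_i$, while $P_i Q_i$ still has zero $r$-jet at every other $x_j$.

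\textbf{Step 4: Assemble and conclude.} Summing, $h \coloneqq \sum_{i=1}^{k} P_i Q_i$ realizes arbitrary prescribed $r$-jets at $x_1, \ldots, x_k$, with $\deg h \leq (k-1)(r+1) + r$. Setting $d(k,r) \coloneqq (k-1)(r+1) + r$ proves that $E_{\overline{x}}$ is surjective for every $d \geq d(k,r)$, which by Step 1 yields the submersion claim. The conceptual core of the argument is Step 1 (noticing that the base points are handled by moving $\overline{x}$, so only the jet-coefficient components need to be generated by varying $p$); the only real technical point is the invertibility of $j^{r}_{x_i}(P_i)$ in Step 3, which in turn is what forces $d$ to grow linearly in $k$ and $r$. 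Note that $d(k,r)$ does not depend on the ambient dimension $n$.
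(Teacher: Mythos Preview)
Your proof is correct and, in one respect, more complete than the paper's. Both arguments share your Step~1: the base-point components of the target are handled by moving $\overline{x}$, reducing the question to surjectivity of the jet-evaluation map $E_{\overline{x}}\colon \Poly_d \to \bigoplus_i \bigoplus_{\ell\le r}\Poly_{\ell,n}$. From there the paper simply counts dimensions, declaring that a solution exists once $\dim\Poly_d$ exceeds the number $k\sum_{\ell=0}^{r}\dim\Poly_{\ell,n}$ of scalar equations; this is only a necessary condition, and the paper does not actually verify that the constraints imposed at distinct points are independent (a standard Hermite-interpolation fact, but left implicit). You instead give an explicit construction via the bump polynomials $P_i=\prod_{j\ne i}\ell_{i,j}^{\,r+1}$ and the unit argument in the local jet algebra, which yields the concrete bound $d(k,r)=(k-1)(r+1)+r$. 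A pleasant by-product is that your bound is independent of $n$; the trade-off is that in the regimes actually used later in the paper ($k\le n+1$ with $r\in\{1,2\}$, see Example~\ref{Ex:poly-degree}) the dimension-count bound is sharper, giving $d=3$ and $d=4$ against your $2k-1\le 2n+1$ and $3k-1\le 3n+2$.
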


\begin{proof}
Fix $d$ a polynomial degree, $p \in \Poly_d$ a polynomial, and $\overline{x} \coloneqq (x_1, \ldots, x_k) \in (\R^{nk} \setminus \Delta)$ points in $\R^n$. The differential of $F$ at $(p, x)$ is a linear map
\[
DF_{p, \overline{x}} \colon \left\{ \begin{array}{ccc}
T_{p, \overline{x}} \Poly_d \times \R^{nk} = \Poly_d \times \R^{nk} & \to & T_{F(p, \overline{x})} {}_k J^r(\R^n, \R) = \R^{nk} \oplus \bigoplus_{\ell = 0}^r (\Poly_{\ell, n})^k \\
(\dot{p}, \overline{\dot{x}}) & \mapsto & (\overline{\dot{x}}, \ldots, \dot{p}(x_i) + \nabla p(x_i)^\top \dot{x}_i, \ldots, \nabla \dot{p}(x_i) + \nabla^2 p(x_i)^\top \dot{x}_i, \ldots).
\end{array} \right.
\]
Let $(\overline{\dot{y}}, \dot{U}) = (\overline{\dot{y}}, \overline{\dot{u}}^{(1)}, \ldots, \overline{\dot{u}}^{(r)}) \in \R^{nk} \oplus \bigoplus_{\ell = 1}^r (\Poly_{\ell, n})^k$. For $F$ to be a submersion at $(p, \overline{x})$, it suffices to find $(\dot{p}, \overline{\dot{x}})$ such that $DF_{p, \overline{x}}(\dot{p}, \overline{\dot{x}}) = (\overline{\dot{y}}, \dot{U})$. This is linear in the coefficients of $\dot{p}$ and in monomials in $\overline{\dot{x}}$. We find that $\overline{\dot{x}} = \overline{\dot{y}}$, and then there are $k \sum_{\ell = 0}^r \binom{n}{\ell}$ equations in the coefficients of $\dot{p}$. Since the $x_i$ are pairwise distinct points in $\R^n$, these equations are independent. For there to be a solution in general, it suffices that
\[
\dim \Poly_d = \binom{n + d}{d} \geq k \sum_{\ell = 0}^r \binom{n}{\ell}.
\]
We set $d(n, k, r)$ to be the smallest $d$ satisfying this inequality.
\end{proof}

\begin{example}
\label{Ex:poly-degree}
If $k \leq n + 1$ and $r = 1$, then we can take $d(n, k, r) = 3$. Indeed, for all $n \geq 1$, $\binom{n + 3}{3} \geq (n + 1)^2 \geq k (\binom{n}{0} + \binom{n}{1})$.

If $k \leq n + 1$ and $r = 2$, then we can take $d(n, k, r) = 4$. Indeed, we compute
\begin{align}
\dim \Poly_d - k \sum_{\ell = 0}^r \binom{n}{\ell} & \geq \dim \Poly_d - (n + 1) \sum_{\ell = 0}^r \binom{n}{\ell} \\
& = \binom{n + 4}{4} - (n + 1)(\binom{n}{0} + \binom{n}{1} + \binom{n}{2}) \\
&= \frac{(n + 1)(n + 2)(n + 3)(n + 4)}{24} - (n + 1)(1 + n + \frac{n (n - 1)}{2}) \\
& = \frac{n + 1}{24} ((n + 2)(n + 3)(n + 4) - 12 (n^2 + n + 2) \\
& = \frac{n + 1}{24} (n^3 - 3n^2 + 14n). \label{E:simplified}
\end{align}
We determine the sign of \eqref{E:simplified} by looking at the derivative of the last factor: $3n^2 - 6n + 14$ has discriminant $36 - 4 \cdot 3 \cdot 14 < 0$ and value $14 > 0$ at $n = 0$, so the derivative is always positive. At $n = 1$, we have $n^3 - 3n^2 + 14n = 12$, so for all $n \geq 1$, \eqref{E:simplified} is positive.
\end{example}

\subsection{Finite number of closest points}In this section we prove an analogue of \cite[Proposition 3]{Yomdin1981}, showing that generically the function $m_Y$ is bounded by $n+1$. 

\begin{theorem}[cf.\ {\cite[Proposition 3]{Yomdin1981}}]
\label{T:Yomdin}
For the generic $q \in \Poly_d$ with $d\geq 2$, writing $Y = Z(q)$,  for every $x\in\R^n$ the set $B(x, \dd_Y(x)) \cap Y$ is a nondegenerate simplex with at most $n + 1$ points.\end{theorem}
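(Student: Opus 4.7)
The plan is to apply the multijet parametric transversality framework of \cref{sec:multijet}, in particular \cref{L:submersion} and \cref{C:param-trans-cor}. For each integer $k \geq 2$, I will set up an incidence variety parametrizing tuples $(x, y_1, \ldots, y_k)$ with the $y_i$ pairwise distinct points of $Y = Z(q)$, all at the same distance from $x$, and each satisfying the Lagrange condition $x - y_i \parallel \nabla q(y_i)$. Two dimension counts then yield the two parts of the claim: ruling out the case $k = n+2$ gives the bound $m_Y \leq n+1$, and ruling out affine dependence for $2\le k \leq n+1$ gives the nondegeneracy of the simplex.

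Concretely, consider $M_k \coloneqq \R^n \times (\R^{nk} \setminus \Delta)$ and the parametric $k$-multijet map
\[
F \colon \Poly_d \times M_k \to N_k \coloneqq \R^n \times {}_k J^1(\R^n, \R), \qquad (q, x, \overline{y}) \mapsto \bigl(x, {}_k j^1(q)(\overline{y})\bigr),
\]
which is a submersion whenever $d \geq d(n,k,1)$, by \cref{L:submersion}. Inside $N_k$, writing a point as $(x, (y_i, v_i, g_i)_{i=1}^k)$, define $W_k$ to be the locally closed subset cut out by $v_i = 0$ ($k$ equations), $\|y_i - x\|^2 = \|y_1 - x\|^2$ for $i \ge 2$ ($k-1$ equations), and $x - y_i$ parallel to $g_i$ ($k(n-1)$ equations), on the open stratum where $g_i \ne 0$ and $x \ne y_i$. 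These conditions are independent, so $W_k$ is a smooth submanifold of codimension $kn + k - 1$ on this stratum.

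For the cardinality bound, take $k = n+2$: then
\[
\operatorname{codim}_{N_k}(W_k) = (n+2)n + (n+2) - 1 = n^2 + 3n + 1 > n^2 + 3n = \dim M_k,
\]
so by \cref{C:param-trans-cor} the image $F(q, M_k)$ misses $W_k$ for generic $q$. This means no $x \in \R^n$ admits $n+2$ distinct equidistant critical points of $\|\cdot - x\|$ on $Y$; since any $n+2$ points in $B(x,\dd_Y(x))\cap Y$ would furnish such a configuration, $m_Y(x) \le n+1$ for every $x$. For nondegeneracy, fix $2 \le k \le n+1$ and refine $W_k$ to $W_k'$ by imposing the additional determinantal condition that the matrix $[y_2 - y_1 \mid \cdots \mid y_k - y_1]$ have rank at most $k-2$, which has codimension $n-k+2$ in $\R^{nk}$. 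Then $\operatorname{codim}_{N_k}(W_k') = kn + n + 1 > n + nk = \dim M_k$, and \cref{C:param-trans-cor} again gives emptiness for generic $q$.

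Intersecting the finite family of residual subsets so obtained for $k = 2, \ldots, n+2$ produces the desired generic set of polynomials. The main technical obstacle is the degree hypothesis in \cref{L:submersion}: reaching the case $k = n+2$ requires $d \ge d(n, n+2, 1)$, which can exceed the threshold $3$ from \cref{Ex:poly-degree} when $n$ is small (for instance $d(1,3,1) = 5$). Consequently, the low-degree cases, notably $d = 2$ (so that $Y$ is a quadric) for small values of $n$, require separate bookkeeping: for $n = 1$ the statement is immediate, and for general quadrics one can argue directly from the structure of the critical-point system.
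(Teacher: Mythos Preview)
Your approach is workable in outline but differs from the paper's in two substantive ways, and the degree issue you flag at the end is precisely what the paper's argument is designed to avoid.

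The paper works with $0$-jets, not $1$-jets: it considers the map $F_k(q,x,\overline{y}) = (\overline{y},\,(q(y_i))_i,\,(\|x-y_i\|^2)_i)$ into ${}_{k+1}J^0(\R^n,\R^2)$ and the linear subspace $W_k=\{s_i=0,\ t_i=t_0\}$. The Lagrange condition $x-y_i\parallel\nabla q(y_i)$ is never imposed as an equation; instead, once generic transversality of $F_k(q,-)$ to $W_k$ is established, the paper \emph{reads off} the nondegeneracy of the simplex from that transversality. At a configuration of closest points one automatically has $\nabla q(y_i)\parallel x-y_i$, so the $\dot y$-contribution to the $\|x-y_i\|^2$ component vanishes on $\ker(\nabla q(y_i)^\top)$; surjectivity modulo $\R(1,\ldots,1)$ then forces $\dot x\mapsto((x-y_i)^\top\dot x)_i$ to have full rank, which is exactly affine independence of the $y_i$. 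One transversality statement thus handles both the bound $m_Y\le n+1$ (run up to $n+2$ points and note they cannot be affinely independent in $\R^n$) and the nondegeneracy, with no separate determinantal stratum $W_k'$.

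More importantly for the stated hypothesis $d\ge 2$: the paper does \emph{not} invoke \cref{L:submersion}. Transversality of $F_k$ to $W_k$ is checked by hand, using the free variable $\dot x$ to hit the distance-squared components and then solving only $k+1$ scalar interpolation equations for $\dot q$. This requires merely $\dim\Poly_d\ge k+1$, which holds for all $d\ge 2$ and all $k$ up to $n+1$. Your route through \cref{L:submersion} for $1$-multijets is exactly what creates the degree obstruction; the paper's direct argument eliminates it, so no separate treatment of quadrics or small $n$ is needed.

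Finally, your codimension claim for $W_k'$ tacitly assumes that the rank-drop locus meets $W_k$ transversally (equivalently, that affine dependence is a codimension-$(n-k+2)$ condition among $k$-tuples on a sphere). This is true on the top stratum but requires justification, and the determinantal locus is not smooth, so you would in any case need to stratify before applying \cref{C:param-trans-cor}. The paper's approach sidesteps this entirely.
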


\begin{proof}
For all $k \in \{0, \ldots, n + 1\}$, define the parameterized jet map
\[
F_k \colon \left\{
\begin{array}{ccc}
\Poly_d \times \R^n \times (\R^{n(k + 1)} \setminus \Delta) & \to & {}_{k + 1} J^0(\R^n, \R^2) \\
(q, x, \overline{y}) & \mapsto & (\overline{y}, \ldots, q(y_i), \ldots, \| x - y_i \|^2, \ldots)
\end{array}
\right.
\]
induced by the function $y \mapsto (q(y), \| x - y \|^2)$. Its differential at a point $(q, x, \overline y)$ is
\[
DF_{k, (q, x, \overline{y})} \colon \left\{ \begin{array}{ccc}
\Poly_d \times \R^n \times \R^{n (k + 1)} & \to & \R^{n (k + 1)} \times \R^{k + 1} \times \R^{k + 1} \\
(\dot{q}, \dot{x}, \overline{\dot{y}}) & \mapsto & (\overline{\dot{y}}, \ldots, \dot{q}(y_i) + \nabla q(y_i)^\top \dot{y}_i, \ldots, 2(x - y_i)^\top (\dot{x} - \dot{y}_i), \ldots).
\end{array} \right.
\]
Let
\[
W_k \coloneqq \left\{
(\overline z, \overline s, \overline t) \in {}_{k + 1} J^0(\R^n, \R^2)
\left|
\begin{array}{l}
\forall i \in \{0, \ldots, k\}, \; s_i = 0, \\
\forall i \in \{1, \ldots, k\}, \; t_i = t_0
\end{array}
\right.
\right\}.
\]
be a subspace of ${}_{k + 1} J^0(\R^n, \R^2)$ and let us prove that $F_k$ is transverse to $W_k$. Let $(q, x, \overline{y}) \in \Poly_d \times \R^n \times (\R^{n(k + 1)} \setminus \Delta)$ such that $F_k(q, x, \overline{y}) \in W_k$, and let $(\overline{\dot{z}}, \overline{\dot{s}}, \overline{\dot{t}}) \in T_{F_k(q, x, \overline{y})} {}_{k + 1} J^0(\R^n, \R^2)$. We are looking for $(\dot{q}, \dot{x}, \overline{\dot{y}}) \in T_{(q, x, \overline{y})} (\Poly_d \times \R^n \times (\R^{n(k + 1)} \setminus \Delta))$ and $(\overline{\dot{w}}, 0, 0) \in T_{F_k(q, x, \overline{y})} W_k$ such that
\begin{align}
(\overline{\dot{z}}, \overline{\dot{s}}, \overline{\dot{t}}) & = DF_{k, (q, x, \overline{y})} (\dot{q}, \dot{x}, \overline{\dot{y}}) + (\overline{\dot{w}}, 0, 0) \\
& = (\overline{\dot{y}} + \overline{\dot{w}}, \ldots, \dot{q}(y_i) + \nabla q(y_i)^\top \dot{y}_i, \ldots, 2(x - y_i)^\top (\dot{x} - \dot{y}_i), \ldots).
\end{align}
Firstly, we observe that $x$ is different from all the $y_i$. Indeed, if not, then we would have $\| x - y_i \|^2 = 0$ for all $i \in \{0, \ldots, k\}$, which contradicts the fact that the $y_i$ are distinct. So we can set $\dot{x} = 0$ and solve the equations $2(x - y_i)^\top (\dot{x} - \dot{y}_i) = \dot{t}_i$ to obtain the $\dot{y}_i$.
Next, since $d \geq 2$, we can find $\dot{q}$ solving the equations $\dot{q}(y_i) + \nabla q(y_i)^\top \dot{y}_i = \dot{s}_i$. Indeed, we have $k + 1$ equations in the coefficients of $\dot{q}$ and $\dim \Poly_d = \binom{n + d}{d} \geq \binom{n + 2}{2} = \frac{(n + 2)(n + 1)}{2} \geq k + 1$.
Finally, we choose $\overline{\dot{w}} = \overline{\dot{z}} - \overline{\dot{y}}$ and conclude that $F_k$ is transverse to $W_k$.

By the parametric transversality theorem (\cref{T:parametric-transversality}), for generic $q$, $F_k(q, -)$ is transverse to $W_k$. Fix such a $q$, let $x \in \R^n$, and take $k + 1$ points $y_0, \ldots, y_k$ from $B(x, \dd_Y(x)) \cap Y$. Then $F_k(q, x, \overline{y}) \in W_k$, so we can write the transversality property:
\[
\textnormal{im}(DF_k(q, -)_{x, \overline{y}}) + T_{F_k(q, x, \overline{y})} W_k = T_{F_k(q, x, \overline{y})} {}_{k + 1} J^0(\R^n, \R^2),
\]
or more explicitly,
\[
\textnormal{im}(DF_k(q, -)_{x, \overline{y}}) + \R^{n (k + 1)} \times 0^{k + 1} \times \R (1, \ldots, 1) = \R^{n (k + 1)} \times \R^{k + 1} \times \R^{k + 1}.
\]
Using the expression of $DF_k$ above, we know that
\[
DF_k(q, -)_{x, \overline{y}} \colon (\dot{x}, \overline{\dot{y}}) \mapsto (\overline{\dot{y}}, \ldots, \nabla q(y_i)^\top \dot{y}_i, \ldots, 2(x - y_i)^\top (\dot{x} - \dot{y}_i), \ldots).
\]
In particular, when the terms $\nabla q(y_i)^\top \dot{y}_i$ equal $0$, then we also have $(x - y_i)^\top \dot{y}_i = 0$ because $x - y_i$ and $\nabla q(y_i)$ are collinear. Thus we need $((x - y_i)^\top \dot{x})_i$ to span a supplementary space to $\R (1, \ldots, 1)$ (by a dimension argument). For this last part to be satisfied, we need the vectors
\[
\{(x - y_i) - (x - y_0)\}_{i = 1, \ldots, k} = \{y_i - y_0\}_{i = 1, \ldots ,k}
\]
to be linearly independent. This is equivalent to $y_0, \ldots, y_k$ forming a nondegenerate simplex in $\R^n$.

Finally, we pick $q$ transverse to every $W_k$ for $k \in \{1, \ldots, n + 1\}$: this is a generic choice. By the above arguments, any distinct set of at most $n + 2$ points in $B(x, \dd_Y(x)) \cap Y$ forms a nondegenerate simplex. We conclude that $B(x, \dd_Y(x)) \cap Y$ has at most $n + 1$ points (any more would lead to a contradiction with nondegeneracy) and that these points form a nondegenerate simplex.
\end{proof}

\subsection{Finite number of critical points}
In this section we study the structure of critical points for the function $\dist$ for generic $X = Z(p)$ and $Y = Z(q)$.

\begin{definition}[$k$-critical points]
\label{D:k-crit-pt}
Let $X$ be a subset of $\R^n$ and $Y$ a closed definable subset of $\R^n$. For all $k \geq 0$, we define the \emph{$k$-critical points of $\dist$} as the critical points $x \in X$ of $\dist$ such that  $B(x, \dd_Y(x)) \cap Y=\{y_0, \ldots, y_k\}$.
We denote by $C_{k, *}(\dist)$ the set of $k$-critical points of $\dist$ that are not in $Y$. 
\end{definition}

Note that, by \cref{T:Yomdin}, every critical point is a $k$-critical point for some $k \in \{0, \ldots, n\}$.

\begin{definition}[Algebraic critical points]
\label{D:algebraic-critical-points}
Let $d \geq 0$, $p, q \in \Poly_d$, and $k \geq 0$. We define the algebraic set
\[
C_k(p, q) \coloneqq \left\{
\begin{array}{rcl}
x & \in & \R^n, \\
y_0, \ldots, y_k & \in & \R^{n(k + 1)}, \\
\lambda_0, \ldots, \lambda_k & \in & \R^{k + 1}, \\
\mu, r & \in & \R
\end{array}
\left|
\begin{array}{l}
x = \mu \nabla p(x) + \sum_{i = 0}^k \lambda_i y_i, \\
\sum_{i = 1}^k \lambda_i = 1, \\
p(x) = 0, \\
\forall i \in \{0, \ldots, k\}, \; q(y_i) = 0, \\
\forall i \in \{0, \ldots, k\}, \; \|x - y_i\|^2 = r^2, \\
\forall i \in \{0, \ldots, k\}, \; \textnormal{rank}(x - y_i, \nabla q(y_i)) \leq 1
\end{array}
\right.
\right\}.
\]
We then define the \emph{algebraic $k$-critical points w.r.t.\ $p$ and $q$} as the set of points $x \in \R^n$ in the projection of $C_k(p, q)$ on the first component.
\end{definition}

\begin{lemma}
\label{L:algebraic-critical-points}
For generic $p\in \Poly_{d_1}$ and $q\in \Poly_{d_2}$, every critical point $x$ of $\dist$ not in $Y$ is an algebraic $k$-critical point for some $k \in \{0, \ldots, n\}$.
\end{lemma}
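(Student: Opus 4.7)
The plan is to verify that, under suitable generic conditions on $(p,q)$, each algebraic equation defining $C_k(p,q)$ in \cref{D:algebraic-critical-points} follows from the Lagrange-type characterization of critical points of $\dist$ coming from \cref{coro:projection} and \cref{propo:subgradient1}. The genericity hypotheses I will fix are: by \cref{T:Yomdin}, $B(x,\dd_Y(x))\cap Y$ has at most $n+1$ points for every $x$; by \cref{P:transverse-Y-MY}, $X$ is transverse to both $Y$ and $\overline{M_Y}$; and a standard codimension count (or an application of the submersion \cref{L:submersion}) ensures that $0$ is a regular value of both $p$ and $q$, so that $X$ and $Y$ are smooth hypersurfaces with nonvanishing gradients along them. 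All of these conditions are simultaneously generic.

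Given such $(p,q)$ and a critical point $x$ of $\dist$, the case $x\in X\cap Y$ is handled by the convention $C_{0,*}(\dist)=X\cap Y$, so assume $r\coloneqq \dd_Y(x)>0$. The Yomdin bound gives $B(x,r)\cap Y=\{y_0,\ldots,y_k\}$ with $1\le k\le n$ and all $y_i$ at the common distance $r$ from $x$. I will apply \cref{coro:projection} to get $\partial_x \dd_Y\cap N_x X\neq\varnothing$, and combine this with the explicit description of $\partial_x \dd_Y$ from \cref{propo:subgradient1} to produce coefficients $\lambda_0,\ldots,\lambda_k\ge 0$ with $\sum_i\lambda_i=1$ and a scalar $\mu'\in\R$ satisfying
\[
\frac{1}{r}\sum_{i=0}^{k}\lambda_i(x-y_i)=\mu'\,\nabla p(x).
\]
Using $\sum_i\lambda_i=1$ and setting $\mu\coloneqq r\mu'$ yields $x=\mu\,\nabla p(x)+\sum_i\lambda_i y_i$, which is the first equation defining $C_k(p,q)$.

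The remaining equations are immediate: $p(x)=0$, $q(y_i)=0$, and $\|x-y_i\|^2=r^2$ follow from $x\in X$, $y_i\in Y$, and the common distance $r$ of the $y_i$ from $x$. Finally, each $y_i$ minimizes the smooth function $\|x-\cdot\|^2$ over the smooth hypersurface $Y=Z(q)$ at a smooth point, so the classical Lagrange multipliers rule forces $x-y_i$ to be parallel to $\nabla q(y_i)\neq 0$, giving exactly $\textnormal{rank}(x-y_i,\nabla q(y_i))\le 1$. The tuple $(x,\overline{y},\overline{\lambda},\mu,r)$ then lies in $C_k(p,q)$, proving that $x$ is an algebraic $k$-critical point. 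The argument is essentially bookkeeping: the technical content has been distilled into \cref{T:Yomdin} and \cref{P:transverse-Y-MY}, and I do not expect any substantial further obstacle beyond keeping track of the multipliers.
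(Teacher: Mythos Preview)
Your proposal is correct and follows essentially the same route as the paper: invoke \cref{P:transverse-Y-MY} and \cref{T:Yomdin} for the generic setup, use \cref{propo:subgradient1} together with \cref{coro:projection} to extract the convex combination and the multiplier $\mu$, and obtain the rank condition from each $y_i$ being a local minimizer of $\|x-\cdot\|$ on the smooth hypersurface $Y$. The only minor slip is the lower bound $1\le k$ (a critical point off $X\cap Y$ can have a single closest point), but the paper's own proof writes $k\in\{1,\ldots,n\}$ as well, and your argument goes through verbatim for $k=0$.
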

\begin{proof}
 Let $x$ be an element of {$X \setminus Y$}. By \cref{propo:subgradient1,propo:subgradient2}, we have
\[
\partial_x f = \operatorname{proj}_{T_x X} \left( \operatorname{co} \left\{ \left. \frac{x - y}{\| x - y\|} \; \right| \; y \in B(x, \dd_Y(x)) \cap Y \right\} \right).
\]
Recall that $x$ is a critical point if $0$ belongs to this set. By \cref{T:Yomdin}, for generic $p$ and $q$, the set $B(x, \dd_Y(x)) \cap Y$ is finite with at most $n + 1$ elements. Moreover, for each point $y$ in $B(x, \dd_Y(x)) \cap Y$, the function $y \mapsto \| x - y \|$ reaches a local minimum at $x$, so $x - y$ is parallel to $\nabla q(y)$.

Therefore, if $x$ is a critical point of $\dist$, then there exist $k \in \{0, \ldots, n\}$, $y_0, \ldots, y_k$ in $Y$, $\mu$ in $\R$, and $\lambda_0, \ldots, \lambda_k$ in $[0, 1]$ such that
\begin{equation}
\label{eq:criticalabove}
\begin{gathered}
\sum_{i = 0}^k \lambda_i = 1, \\
\sum_{i = 0}^k \lambda_i (x - y_i) = \mu \nabla p(x), \\
\forall i \in \{1, \ldots, k\}, \; \| x - y_i \| = \| x - y_0 \|, \\
\forall i \in \{0, \ldots, k\}, \; \operatorname{rank}(x - y_i, \nabla q(y_i)) \leq 1,
\end{gathered}
\end{equation}
and so we conclude that $(x, \overline{y}, \overline{\lambda}, \mu, \dd_Y(x)) \in C_k(p, q)$ and $x$ is an algebraic $k$-critical point.
\end{proof}

\begin{theorem}[Finite critical points]
\label{T:finite-crit-pt}
For $d_1, d_2 \geq 3$ and $p \in \Poly_{d_1}$ and $q\in \Poly_{d_2}$, let $X=Z(p)$ and $Y=Z(q)$. Then, for the generic choice of $p$ and $q$, there are only a finite number of critical points of $\dist$. Moreover, for every $k\in \{0, \ldots, n\}$, the number of $k$-critical points of $\dist$ not in $Y$ is bounded by
$$\#C_{k, *}(\dist)\leq c(k,n)d_1^nd_2^{n(k+1)},$$
for some $c(k,n)>0$ depending on $n$ and $k$ only.\end{theorem}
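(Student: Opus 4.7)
The plan is to reduce $C_k(p, q)$ to the solution set of a square polynomial system, establish $0$-dimensionality for generic $(p, q)$ via parametric transversality, and then bound the cardinality via Bezout's theorem. By \cref{L:algebraic-critical-points}, for generic $(p, q)$ every critical point of $\dist$ is the first-coordinate projection of a point of $C_k(p, q)$ for some $k \in \{1, \ldots, n\}$, so it suffices to bound $\#C_k(p, q)$.

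First I would reformulate the defining equations of $C_k(p, q)$ from \cref{D:algebraic-critical-points} to eliminate the auxiliary multiplier $\mu$: the vector equation $x = \mu \nabla p(x) + \sum_i \lambda_i y_i$ is equivalent, away from the locus $\nabla p(x) = 0$, to the rank condition that $x - \sum_i \lambda_i y_i$ be parallel to $\nabla p(x)$, which is cut out by $n - 1$ independent $2 \times 2$ minor equations of degree $d_1$. Each rank condition $\operatorname{rank}(x - y_i, \nabla q(y_i)) \leq 1$ is similarly cut out by $n - 1$ minor equations of degree $d_2$. This produces a square system in the $n(k + 2) + k + 2$ variables $(x, y_0, \ldots, y_k, \lambda_0, \ldots, \lambda_k, r)$ of the following degrees: $d_1$ for the $n - 1$ parallelism equations, $d_1$ for $p(x) = 0$, $1$ for $\sum_i \lambda_i = 1$, $d_2$ for each of the $k + 1$ equations $q(y_i) = 0$, $2$ for each of the $k + 1$ sphere equations $\|x - y_i\|^2 = r^2$, and $d_2$ for each of the $(n - 1)(k + 1)$ rank-condition minors.

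Next I would invoke the multijet parametric transversality theorem (\cref{L:submersion}) applied to the map sending $(p, q) \in \Poly_{d_1} \times \Poly_{d_2}$ to the $1$-jets of $p$ and $q$ at the $k + 2$ distinct points $x, y_0, \ldots, y_k$. By \cref{Ex:poly-degree}, the hypothesis $d_1, d_2 \geq 3$ ensures that this map is a submersion, so for generic $(p, q)$ the complex variety defined by the square system has the expected codimension and is therefore $0$-dimensional. Bezout's theorem then gives
\[
\#C_k(p, q) \leq d_1^{n - 1} \cdot d_1 \cdot d_2^{k + 1} \cdot 2^{k + 1} \cdot d_2^{(n - 1)(k + 1)} = 2^{k + 1}\, d_1^n\, d_2^{n(k + 1)},
\]
and since projection onto $x$ only decreases the count, this yields $\#C_{k, *}(\dist) \leq c(k, n)\, d_1^n\, d_2^{n(k + 1)}$ with $c(k, n) = 2^{k + 1}$.

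The main obstacle I expect is the parametric transversality step: one must verify that the combined evaluation map is genuinely a submersion at every jet configuration arising from $C_k(p, q)$, and handle the positive-codimension loci where the reformulation breaks down (such as $\nabla p(x) = 0$, or where the chosen $n - 1$ minors fail to generate the full rank ideal). Since $\Poly_{d_1}$ and $\Poly_{d_2}$ are independent parameter spaces, \cref{L:submersion} applies to each factor separately and the results combine; the quantitative bound $d_1, d_2 \geq 3$ from \cref{Ex:poly-degree} is precisely what is required for the multijet transversality to function at up to $n + 2$ points.
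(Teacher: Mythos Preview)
Your finiteness argument is essentially the paper's: both set up the critical–point conditions as a locus $W$ in a jet space, observe that the parametrized jet map is a submersion for $d_1,d_2\ge 3$ by \cref{L:submersion} and \cref{Ex:poly-degree}, and invoke parametric transversality to force the preimage to be $0$–dimensional for generic $(p,q)$. The paper keeps the multiplier $\mu$ and the full determinantal rank conditions in its $W^1$ and checks the codimension directly; your elimination of $\mu$ and passage to a square system is a cosmetic reformulation of the same count.

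Where you genuinely diverge is in the cardinality bound. The paper does \emph{not} use B\'ezout: it bounds $\#C_{k,*}(\dist)$ by $b_0(\Lambda_k)\le b(\Lambda_k)$ and then applies the multi–degree Betti–number estimate of Basu (\cite[Theorem~2.6]{basu2018multi}) to the real algebraic set $\Lambda_k\subset\R^n\times\R^{n(k+1)}\times\R^{k+1}\times\R$, with block degrees $(\deg p,\deg q,1,1)$. This yields the stated $c(k,n)\,d_1^n d_2^{n(k+1)}$ with an unspecified constant, but it works purely over $\R$ and requires only a polynomial description of $\Lambda_k$, not a square complete intersection.

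Your B\'ezout route, while it would give a sharper constant, has two real gaps. First, the determinantal conditions $\operatorname{rank}(x-y_i,\nabla q(y_i))\le 1$ are not globally cut out by any fixed choice of $n-1$ minors; your square system therefore defines a strictly larger variety, and you have not shown that its extra components are $0$–dimensional (you flag this yourself but do not resolve it). Second, B\'ezout bounds isolated \emph{complex} solutions, whereas the transversality you invoke from \cref{L:submersion} and \cref{C:param-trans-cor} is over $\R$; a real isolated point can lie on a positive–dimensional complex component and then is invisible to B\'ezout. You would need a complex version of the transversality step to close this. The paper's use of Basu's real estimate sidesteps both issues at once. (A minor point: the parallelism minors have total degree $d_1+1$, not $d_1$, since $x-\sum_i\lambda_i y_i$ is quadratic in $(\lambda,y)$; this only perturbs the constant $c(k,n)$.)
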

\begin{proof}
Consider the map
\[
F^1 \colon \left\{ \begin{array}{ccc}
\Poly_{d_1} \times \Poly_{d_2} \times \R^n \times (\R^{n(k + 1)} \setminus \Delta) \times \R^L & \to  & J^1(\R^n, \R) \times {}_{k + 1} J^1(\R^n, \R) \times \R^L \\
(p, q, x, \overline{y}, \overline{\lambda}, \mu, r) & \mapsto & (x, p(x), \nabla p(x), \overline{y}, q(\overline{y}), \nabla q(\overline{y}), \overline{\lambda}, \mu, r),
\end{array}
\right.
\]
where $\Delta$ is the fat diagonal and $L = k + 3$.  Consider the following subspace of $J^1(\R^n, \R) \times {}_{k + 1} J^1(\R^n, \R) \times \R^L$:
\[
W^1 \coloneqq \left\{
\begin{array}{rcl}
(x, s, u) & \in & J^1(\R^n, \R), \\
(\overline{y}, \overline{t}, \overline{v}) & \in & {}_{k + 1} J^1(\R^n, \R), \\
\overline{\lambda} & \in & \R^{k + 1}, \\
\mu, r & \in & \R
\end{array}
\left|
\begin{array}{l}
x = \mu u + \sum_{i = 0}^k \lambda_i y_i, \\
\sum_{i = 0}^k \lambda_i = 1, \\
s = 0, \\
\forall i \in \{0, \ldots, k\}, \; t_i = 0, \\
\forall i \in \{0, \ldots, k\}, \; \| x - y_i \|^2 = r^2, \\
\forall i \in \{0, \ldots, k\}, \; \textnormal{rank}(x - y_i, v_i) \leq 1 \\
\end{array}
\right.
\right\}.
\]
We observe that the projection of $\operatorname{im} F^1(p, q, -) \cap W^1$ onto the component $x$ contains the set of algebraic $k$-critical points w.r.t.\ $p$ and $q$. Checking the equations in the definition of $W^1$ from the bottom up, we see that they are independent, so $W^1$ has codimension
\begin{align}
\operatorname{codim} W^1 & = (k + 1) (n - 1) + (k + 1) + (k + 1) + 1 + 1 + n \\
& = nk + 2n + k + 3 \\
& = \dim(\R^n \times (\R^{n(k + 1)} \setminus \Delta) \times \R^L).
\end{align}
By \cref{L:submersion} and \cref{Ex:poly-degree}, for $d_1, d_2 \geq 3$, the map $F^1$ is a submersion. By \cref{C:param-trans-cor}, for generic $p, q$ the set of algebraic $k$-critical points is an algebraic set of dimension $0$, so it is finite. By \cref{L:algebraic-critical-points}, we conclude that the number of critical points is finite.

In order to estimate the cardinality of $C_{k, *}(\dist)$ under the assumptions of the statement, we observe that  this set is the projection on the $x$-coordinate of the set $\Lambda_k \subset \R^n\times \R^{n(k+1)}\times \R^{k+1}\times \R$ consisting of the points $(x, \overline{y}, \overline{\lambda}, \mu)$  solving the system \eqref{eq:criticalabove} with $x\in Z(p)$, $y_i\in Z(q)$ for all $i\in \{0, \ldots, k\}$. Since $C_{k, *}(\dist)$ is finite, its cardinality equals the number of its connected components; moreover each component of $\Lambda_k$ projects to a component of $C_{k, *}(\dist)$ and we have
$$\#C_{k, *}(\dist)=b_0(C_{k, *}(\dist))\leq b_0(\Lambda_k)\leq b(\Lambda_k),$$
where $b(\Lambda_k)$ denotes the sum of the Betti numbers of $\Lambda_k$. In order to control this we use \cite[Theorem 2.6]{basu2018multi} which states that the sum of the Betti numbers of an algebraic set $V\subset \R^{n_1+\cdots +n_a}$ defined by $\ell$ polynomials $p_j(x_1, \ldots, x_a)$ of degree $d_i$ in the variable $x_i\in\R^{n_i}$ can be bounded by
$$b(V)\leq O(1)^{n_1+\cdots+n_a} a^{3(n_1+\cdots+n_a)}d_1^{n_1}\cdots d_p^{n_a}.$$
In our case $a=4$, $(n_1, \ldots, n_4)=(n, n(k+1), k+1, 1)$ and $(d_1, \ldots, d_4)=(\deg(p), \deg(q), 1,1)$, so that
$$b(\Lambda_k)\leq c(k,n)\deg(p)^n\deg(q)^{n(k+1)}.$$
\end{proof}

\begin{remark}
\label{R:bottleneck-bounds}
The same proof works also in the case $(X, Y)=(\R^n, Z(q))$ (see also \cref{R:X-is-Rn-case}), in which case (following the above notation)
$$(d_1, \ldots, d_4)=(1, \deg(q), 1,1).$$ The statement in this case is the following. For the generic polynomial $q\in\Poly_d$ the function $\mathrm{dist}_{Z(q)}$ has only finitely many critical points, all these critical points are algebraic and the number of algebraic $k$-critical points is bounded by
\[
\#C_{k, *}(\mathrm{dist}_{Z(q)})\leq \widetilde{c}(k,n) d^{n(k+1)}.
\]
In particular this recovers and generalizes to all dimensions the bound $\#C_{1, *}\leq c(n)d^{2n}$ that one can obtain by combining \cite[Corollary 4.5]{Eklund2023} with the EDD estimate \cite[Corollary 2.10]{EDD_Draisma}.
\end{remark}

\subsection{Continuous selections}

\begin{proposition}
\label{P:exp-reg-alg}
Let $x \in \R^n$. For $d_1 \geq 3$ and $d_2 \geq 4$ and generic $p\in \Poly_{d_1}$ and $q\in \Poly_{d_2}$, the critical points of $\dist$ are regular values of the normal exponential map of $Y$.
\end{proposition}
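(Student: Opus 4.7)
The idea is to translate the failure of $\varphi \colon NY \to \R^n$ being regular at a preimage of a critical point $x$ into a polynomial condition on the $2$-jet of $q$, and then apply a multijet parametric transversality argument analogous to (an enlargement of) the one used in the proofs of \cref{T:finite-crit-pt} and \cref{L:algebraic-critical-points}.

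\textbf{Step 1 (Algebraic description of irregularity).} Let $x$ be a critical point of $\dist$. The fiber $\varphi^{-1}(x)$ consists of pairs $(y, x-y)$ with $y \in Y$ and $x - y \in N_y Y$, i.e.\ $\nabla q(y) \parallel (x - y)$. Writing $t = \|x-y\|$ and $\nu = \nabla q(y)/\|\nabla q(y)\|$, the differential $D\varphi_{(y, x - y)}$ decomposes as $(\dot y, \dot t) \mapsto (I - t S_\nu)\dot y + \dot t\, \nu$, and so fails to be surjective exactly when $1/t$ is an eigenvalue of the shape operator $S_\nu = -\nabla^2 q(y)|_{T_y Y}/\|\nabla q(y)\|$. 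For $Y = Z(q)$ this is equivalent to the vanishing of the polynomial
\[
\Phi(y, x, q) \coloneqq \det\!\Bigl(\bigl(\|\nabla q(y)\|\, I + t\, \nabla^2 q(y)\bigr)\big|_{\ker \nabla q(y)}\Bigr),
\]
which is algebraic in the $2$-jet of $q$ at $y$ and in $x, y$ (the scalar $t = \|x - y\|$ is already among the auxiliary variables of the critical-point system \eqref{eq:criticalabove}).

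\textbf{Step 2 (Enlarged multijet map).} We need to handle two cases: either $y \in \varphi^{-1}(x)$ coincides with one of the closest points $y_0, \ldots, y_k$ from the critical-point system of $x$, or it does not. In both cases we upgrade the multijet map of \cref{T:finite-crit-pt} to include the $2$-jets of $q$ needed to read off $\Phi$. In the first case, for each $k \in \{1, \ldots, n\}$, we consider
\[
F^{2'} \colon \Poly_{d_1} \times \Poly_{d_2} \times \R^n \times (\R^{n(k+1)} \setminus \Delta) \times \R^{k+3} \longrightarrow J^1(\R^n, \R) \times {}_{k+1} J^2(\R^n, \R) \times \R^{k+3},
\]
sending $(p, q, x, \overline y, \overline \lambda, \mu, r)$ to $(j^1 p(x), {}_{k+1} j^2 q(\overline y), \overline \lambda, \mu, r)$; in the second case, we add a further factor $y \in \R^n$ and a $J^2(\R^n, \R)$ factor for $j^2 q(y)$. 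By \cref{Ex:poly-degree}, the hypotheses $d_1 \geq 3$ and $d_2 \geq 4$ are exactly what \cref{L:submersion} requires for both maps to be submersions.

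\textbf{Step 3 (Codimension count and conclusion).} Let $\widetilde W$ be the subset of the target of $F^{2'}$ (resp.\ its enlargement) cut out by the same equations as the set $W^1$ in the proof of \cref{T:finite-crit-pt} (encoding that $x$ is an algebraic $k$-critical point) together with $\Phi(y_i, x, q) = 0$ for some $i$ (resp.\ together with $q(y) = 0$, $\operatorname{rank}(x - y, \nabla q(y)) \leq 1$, and $\Phi(y, x, q) = 0$). In the first case the $W^1$-conditions already match the dimension of the source and the additional equation $\Phi = 0$ raises the codimension by $1$; in the second case the new source coordinates for $y$ contribute $n$, while the new equations contribute $1 + (n-1) + 1 = n+1$, again a net excess codimension of $1$. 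Applying \cref{C:param-trans-cor} and intersecting the resulting generic sets over the finitely many values $k \in \{1, \ldots, n\}$ and the two cases above yields the statement; the case $x \in X \cap Y$ is trivial since $\varphi$ is a local diffeomorphism along the zero section of $NY$.

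\textbf{Main obstacle.} The delicate point is ensuring that the extra equation $\Phi = 0$ is genuinely transverse to the $W^1$-conditions, i.e.\ contributes codimension $1$ rather than being implied by them. Since $\Phi$ depends on $\nabla^2 q(y)$ whereas the other equations only involve $q(y)$, $\nabla q(y)$, and the $2$-jet of $p$ at $x$, this independence reduces to being able to vary $\nabla^2 q(y)$ freely, and this is precisely what \cref{L:submersion} grants under the quantitative bound $d_2 \geq 4$ from \cref{Ex:poly-degree}.
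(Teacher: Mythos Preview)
Your argument is essentially the paper's: both set up a multijet map landing in $J^1(\R^n,\R)\times{}_{k+1}J^2(\R^n,\R)\times\R^L$, define the bad locus as the lift of $W^1$ cut by an extra irregularity condition (your $\Phi=0$ is equivalent to the paper's rank condition on $[(I_n+rH_i)(I_n-v_iv_i^\top),v_i]$), observe that this condition involves only the Hessian of $q$ and is therefore independent of the $W^1$-equations, and conclude via \cref{L:submersion}, \cref{Ex:poly-degree}, and \cref{C:param-trans-cor}.

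You are in fact more careful than the paper on one point: you separate the case where the singular preimage $(y,x-y)\in\varphi^{-1}(x)$ has $y$ among the closest points $y_0,\ldots,y_k$ (your case~1) from the case where it does not (your case~2). The paper's proof only treats case~1, which is all that is actually used downstream in \cref{P:regular-value} and \cref{C:C2selection}, but case~2 is needed for the statement as literally written. One small caution in your case~2: appending a $(k{+}2)$-nd second-jet evaluation of $q$ falls just outside the range covered by \cref{Ex:poly-degree} when $k=n$ and $n$ is small; this is harmless, since in case~2 the $W^1$-conditions at the $y_i$ only use the first jet of $q$, so you may downgrade those factors to $J^1$ and keep $J^2$ only at the extra point $y$, which brings you back within the bound (and $n=1$ is trivial anyway).
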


\begin{proof}
The normal exponential map of $Y$ is
\[
\exp \colon \left\{ \begin{array}{ccc}
NY \cong Y \times \R & \to & \R^n \\
(y, t) & \mapsto & y + t \nabla q(y).
\end{array} \right.
\]
Its differential at a point $(y, t) \in NY$ is
\[
D \exp_{(y, t)} \colon \left\{ \begin{array}{ccc}
TNY \subseteq \R^n \times \R & \to & T \R^n \cong \R^n \\
(\dot{y}, \dot{t}) & \mapsto & \dot{y} + t H(q)(y) \dot{y} + \dot{t} \nabla q(y).
\end{array} \right.
\]
Thus a point $x$ in $\R^n$ is a critical value of $\exp$ if, and only if, it can be written as $x = y + t \nabla q(y)$ for some $(y, t)$ in $NY$ such that the block matrix $[\Id + t H(q)(y), \nabla q(y)]$ restricted to $\nabla q(y)^\bot \times \R$ has rank strictly less than $n$. Consider the map
\[
F^2 \colon \left\{ \begin{array}{ccc}
\Poly_{d_1} \times \Poly_{d_2} \times \R^n \times (\R^{n(k + 1)} \setminus \Delta) \times \R^L & \to & J^1(\R^n, \R) \times {}_{k + 1} J^2(\R^n, \R) \times \R^L \\
(p, q, x, \overline{y}, \overline{\lambda}, \mu, r) & \mapsto & (x, p(x), \nabla p(x), \overline{y}, q(\overline{y}), \nabla q(\overline{y}), H(q)(\overline{y}), \overline{\lambda}, \mu, r),
\end{array} \right.
\]
and let
\[
W^2 \coloneqq \{(x, s, u, \overline{y}, \overline{t}, \overline{v}, \overline{H}, \overline{\lambda}, \mu, r) \in J^1(\R^n, \R) \times {}_{k + 1} J^2(\R^n, \R) \times \R^{L} \mid (x, s, u, \overline{y}, \overline{t}, \overline{v}, \overline{\lambda}, \mu, r) \in W^1\},
\]
where $W^1$ is defined in the proof of \cref{T:finite-crit-pt}. We define the subspace of $J^1(\R^n, \R) \times {}_{k + 1} J^2(\R^n, \R) \times \R^L$
\[
W^2_{\textnormal{crit}} \coloneqq \left\{
(x, s, u, \overline{y}, \overline{t}, \overline{v}, \overline{H}, \overline{\lambda}, \mu, r) \in W^2
\mid
\forall i \in \{0, \ldots, k\}, \; \textnormal{rank}[(\Id + r H_i) (\Id - v_i v_i^\top), v_i] < n
\right\}.
\]
This space is defined by the same equations as $W^1$ and additional equations that involve the $H_i$. These new equations are independent of those defining $W^2$. Indeed, given $(x, s, u, \overline{y}, \overline{t}, \overline{v}, \overline{H}, \overline{\lambda}, \mu, r)$ in $W^2$ and $i \in \{0, \ldots, k\}$, we can assume $H_i = 0$, which gives us $\operatorname{rank}[(\Id - v_i v_i^\top), v_i] = n$.

We deduce that $W^2_{\textnormal{crit}}$ has codimension $> nk + 2n + k + 3$, which is the dimension of $\R^n \times (\R^{n(k + 1)} \setminus \Delta) \times \R^L$. 
By \cref{L:submersion} and \cref{Ex:poly-degree}, for $d_1 \geq 3$ and $d_2 \geq 4$, the map $F^2$ is a submersion.
By \cref{C:param-trans-cor}, for generic $p$ and $q$, the critical points of $\dist$ miss $W^2_{\textnormal{crit}}$. We conclude that, for generic $p$ and $q$, the critical points of $\dist$ are regular values for the normal exponential map of $Y$.
\end{proof}

\begin{corollary}
\label{C:C2selection}
For generic $p\in \Poly_{d_1}$ and $q\in \Poly_{d_2}$ of respective degrees $d_1 \geq 3$ and $d_2\geq 4$, denoting by $X=Z(p)$ and $Y=Z(q)$ the function $\dist$ is a continuous selection of $\mathcal{C}^2$ functions around its critical points.
\end{corollary}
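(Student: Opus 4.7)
The plan is to combine \cref{P:regular-value} with \cref{P:exp-reg-alg}. First I would fix $p \in \Poly_{d_1}$ and $q \in \Poly_{d_2}$ generic enough that simultaneously: (i) $X = Z(p)$ and $Y = Z(q)$ are smooth hypersurfaces (by Sard applied to $p$ and $q$, or equivalently by parametric transversality of the evaluation map to $0 \in \R$); (ii) $X$ is transverse to both $Y$ and $\overline{M_Y}$ (by \cref{P:transverse-Y-MY}); and (iii) every critical point of $\dist$ is a regular value of the normal exponential map $\varphi \colon NY \to \R^n$ of $Y$ (by \cref{P:exp-reg-alg}, which requires exactly $d_1 \geq 3$ and $d_2 \geq 4$). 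The intersection of these generic conditions is still generic.

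Next, I would fix a critical point $x \in C(\dist)$ and apply \cref{P:regular-value} at $x$. Since $x$ is a regular value of $\varphi$ by (iii), that proposition supplies $\varepsilon, \delta > 0$ and finitely many $\mathcal{C}^2$ functions $f_0, \ldots, f_k$ defined on $B(x, \delta)$ (namely $f_i = \mathrm{dist}_{B(y_i, \varepsilon) \cap Y}$, where $\{y_0, \ldots, y_k\} = B(x, \dd_Y(x)) \cap Y$) such that on $B(x, \delta)$ we have the identity $\dd_Y = \min_i f_i$, exhibiting $\dd_Y$ as a continuous selection of $\mathcal{C}^2$ functions in a neighborhood of $x$ in $\R^n$.

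Finally, I would restrict to $X$. Since $X$ is smooth near $x$ by (i), the restrictions $f_i|_{X \cap B(x, \delta)}$ are $\mathcal{C}^2$ functions on the smooth manifold $X \cap B(x, \delta)$, and the identity
\[
\dist \bigl|_{X \cap B(x, \delta)} = \min_{i \in \{0, \ldots, k\}} (f_i|_{X \cap B(x, \delta)})
\]
shows that $\dist$ is a continuous selection of $\mathcal{C}^2$ functions near $x$, as desired.

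I do not expect any serious obstacle here: the content is entirely in \cref{P:regular-value} (which handles the ambient function $\dd_Y$) and \cref{P:exp-reg-alg} (which arranges the regularity hypothesis generically), and the corollary amounts to noting that restricting a continuous selection to a smooth submanifold is again a continuous selection, with the $\mathcal{C}^2$ property preserved. The only mild subtlety is ensuring the various generic conditions are compatible, which is immediate since each is residual and residual sets are stable under finite intersection.
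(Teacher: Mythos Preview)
Your proposal is correct and follows essentially the same approach as the paper: invoke \cref{P:exp-reg-alg} to ensure that critical points of $\dist$ are regular values of the normal exponential map of $Y$, then apply \cref{P:regular-value}. You spell out more details (smoothness of $X$ and $Y$, the transversality condition from \cref{P:transverse-Y-MY}, and the restriction step to $X$), while the paper's proof is a two-line citation of the same two results.
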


\begin{proof}
By \cref{P:exp-reg-alg}, for generic $p$ and $q$, the critical points of $\dist$ are regular values of the normal exponential map of $Y$. By \cref{P:regular-value}, we deduce that $\dist$ is a $\mathcal{C}^2$ selection around its critical points.
\end{proof}

\subsection{Nondegeneracy of critical points}

Now we aim to describe the degeneracy of critical points algebraically. To do this, we need to describe the Hessian of $\dist$ at $x$ in terms of the derivatives of $p$ and $q$. 

\begin{lemma}
\label{L:II}
Let $Y = Z(q)$ and $y_0 \in Y$. Then the second fundamental form of $Y$ at $y_0$, is
\[
\textnormal{II}_{y_0} \colon \left\{ \begin{array}{ccc}
T_{y_0} Y \times T_{y_0} Y & \to & N_{y_0} Y = \R \nabla q(y_0) \\
(v, w) & \mapsto & -\frac{\nabla q(y_0)}{\| \nabla q(y_0) \|} v^\top H(q)(y_0) w.
\end{array} \right.
\]
\end{lemma}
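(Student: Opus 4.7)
The plan is to exploit the implicit description of $Y$ as the regular level set $\{q=0\}$ and to read off $\textnormal{II}_{y_0}$ from the constraint that tangent vector fields annihilate $\nabla q$. Given $v,w\in T_{y_0}Y$, I would extend $w$ to a smooth tangent vector field $W$ on a neighborhood of $y_0$ in $Y$. The second fundamental form is then given by the normal component of the ambient Euclidean derivative: $\textnormal{II}_{y_0}(v,w) = \bigl((D_v W)(y_0)\bigr)^{N_{y_0}Y}$, where the superscript denotes orthogonal projection onto $N_{y_0}Y = \R\,\nabla q(y_0)$.

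The key computation is a single differentiation. Since $W$ is tangent to $Y$, the function $y\mapsto \nabla q(y)\cdot W(y)$ vanishes identically on $Y$ near $y_0$. Differentiating at $y_0$ in the direction $v$, and using $D_v(\nabla q)(y_0) = H(q)(y_0)\,v$, the product rule gives
\[
v^\top H(q)(y_0)\,w \,+\, \nabla q(y_0)\cdot (D_v W)(y_0) \,=\, 0.
\]
This pins down the component of $(D_v W)(y_0)$ along $\nabla q(y_0)$; substituting the resulting value into the orthogonal projection onto the line $\R\,\nabla q(y_0)$ realizes $\textnormal{II}_{y_0}(v,w)$ as a scalar multiple of the unit normal $\nabla q(y_0)/\|\nabla q(y_0)\|$ proportional to $-v^\top H(q)(y_0)\,w$, which is precisely the formula in the statement.

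The final expression depends only on the pointwise data $v$, $w$, $\nabla q(y_0)$ and $H(q)(y_0)$, which recovers at no extra cost the classical facts that $\textnormal{II}_{y_0}$ is symmetric in $v,w$ and independent of the chosen extension $W$. I do not foresee any serious obstacle: the content amounts to a one-line differentiation of a defining identity. The only bookkeeping needed is to track the normalization factor coming from the projection onto $\R\,\nabla q(y_0)$, so that the scalar in front of the unit normal $\nabla q(y_0)/\|\nabla q(y_0)\|$ agrees with the one displayed in the lemma.
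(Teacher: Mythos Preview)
Your argument is correct and rests on the same idea as the paper's: extend one of the tangent vectors to a tangent vector field, take the ambient Euclidean derivative, and project onto the normal line. The paper, however, implements this by choosing the explicit extension
\[
v(y)=\tfrac{1}{\alpha^{2}}\bigl(v_{0}\,\lVert\nabla q(y)\rVert^{2}-(v_{0}^{\top}\nabla q(y))\,\nabla q(y)\bigr)
\]
on a full neighborhood in $\R^{n}$ and then computing its Jacobian term by term before contracting with $\nabla q(y_0)$ and $w_0$. Your route---differentiating the tangency constraint $\nabla q\cdot W\equiv 0$ along $Y$---is the standard textbook shortcut and bypasses the explicit Jacobian calculation entirely. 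The gain is brevity and conceptual clarity, and you get symmetry and extension-independence for free; the paper's brute-force computation, by contrast, makes every constant visible at each step. Either way the content reduces to a single product-rule identity, so there is no genuine obstacle; just be sure to carry out the final projection carefully so that the power of $\lVert\nabla q(y_0)\rVert$ in front of the unit normal matches the displayed formula.
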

\begin{proof}
Let $v_0, w_0 \in T_{y_0} Y$ be tangent vectors and write $\alpha \coloneqq \| \nabla q(y_0) \|$. Define the tangent vector field
\[
v \colon y \mapsto \frac{1}{\alpha^2}\left(v_0 \| \nabla q(y) \|^2 - (v_0^\top \nabla q(y)) \nabla q(y)\right)
\]
in a neighborhood around $y_0$. Note that $v(y) \in T_y Y$ for all $y$ and $v(y_0) = v_0$. By one definition of the second fundamental form \cite[Chapter 8]{Lee1997}, we have 
\[
\text{II}_y(v_0, w_0) = \frac{\nabla q(y_0)}{\alpha} \nabla q(y_0)^\top J(v)(y_0) w_0
\]
(i.e.\ the projection onto $N_y Y$ of the differential of $v$ in the direction $w_0$), where $J(v)$ is the Jacobian of $v$.

Recall that $J(v)$ is the matrix $(\frac{\partial v_i}{\partial x_j})_{1 \leq i, j \leq n}$, where $v_i$ is the $i^{\text{th}}$ component of $v(y)$. For all $i \in \{1, \ldots, k\}$, we have
\[
\alpha^2 v_i(y) = v_{0, i} \| \nabla q(y) \|^2 - (v_0^\top \nabla q(y)) \frac{\partial q}{\partial x_i}(y).
\]
Then, for all $j \in \{1, \ldots, n\}$, we have
\[
\alpha^2 \frac{\partial v_i}{\partial x_j}(y) = v_{0, i} \cdot 2 \nabla q(y)^\top \frac{\partial \nabla q}{\partial x_j}(y) - \left(v_0^\top \frac{\partial \nabla q}{\partial x_j}(y)\right) \frac{\partial q}{\partial x_i}(y) - (v_0^\top \nabla q(y)) \frac{\partial^2 q}{\partial x_i x_j}(y).
\]
Thus we can write
\[
\alpha^2 \nabla v_i (y)^\top = 2 v_{0, i} \nabla q(y)^\top H(q)(y) - (v_0^\top H(q)(y)) \frac{\partial q}{\partial x_i}(y) - (v_0^\top \nabla q(y)) \frac{\partial \nabla q}{\partial x_i}(y)^\top
\]
and
\[
\alpha^2 J(v)(y) = 2 v_0 \nabla q(y)^\top H(q)(y) - \nabla q(y) v_0^\top H(q)(y) - (v_0^\top \nabla q(y)) H(q)(y).
\]
Finally, we get
\begin{align}
\text{II}_{y_0}(v_0, w_0) = {} & \frac{\nabla q(y_0)}{\alpha} \nabla q(y_0)^\top J(v)(y_0) w_0 \\
= {} & \frac{\nabla q(y_0)}{\alpha^3} \Big( 2(\nabla q(y_0)^\top v_0) \nabla q(y_0)^\top - \nabla q(y_0)^\top \nabla q(y_0) v_0^\top - \nabla q(y_0)^\top (v_0^\top \nabla q(y_0)) \Big) H(q)(y_0) w_0 \\
= {} & -\frac{\nabla q(y_0)}{\alpha} v_0^\top H(q)(y_0) w_0,
\end{align}
using the fact that $v_0^\top \nabla q(y_0) = 0$.
\end{proof}

For all $y$ in $Y$, we define $g_y \coloneqq \text{dist}_{B(y, \varepsilon) \cap Y}$ and $f_y \coloneqq g_y |_X$ for $y \in Y$.
We also introduce the following notation: for a symmetric matrix $H\in \mathrm{Sym}(n, \R)$ and a subspace $V\subseteq \R^n$, we denote by 
$H|_V$ a matrix representing the restriction of the quadratic form $h(x)=x^\top Hx$ to $V$. If $L=[v_1, \ldots, v_\ell]\in \R^{n\times \ell}$ is a matrix whose columns form a basis for $V$, then one can take
$$H|_{V}=L^\top HL.$$

\begin{definition}[Algebraic degenerate $k$-critical points]
The set of \emph{algebraic degenerate $k$-critical points w.r.t.\ $p$ and $q$} is the projection of the set
\begin{equation}
\label{E:alg-degenerate}
D_k(p, q) \coloneqq \left\{
(x, \overline{y}, \overline{\lambda}, \mu, r) \in C_k(p, q)
\; \left| \;
\det \left(\mu H(p)(x)|_{\partial_x f^\bot} + r \sum_{i = 0}^k \lambda_i H(g_{y_i})(x)|_{\partial_x f^\bot} \right) = 0
\right.
\right\}
\end{equation}
onto the first component $x$.
\end{definition}

The use of the adjective ``algebraic'' is justified by the following lemma.

\begin{lemma}
\label{L:alg-deg-is-alg}
For all $k \in \{1, \ldots, n\}$, the set $D_k(p, q)$ is algebraic.
\end{lemma}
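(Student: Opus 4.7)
The plan is to show that the determinantal condition in \eqref{E:alg-degenerate} cuts $D_k(p,q)$ out of $C_k(p,q)$ by a polynomial equation. First, $C_k(p,q)$ itself is algebraic: every one of its defining equations is polynomial, and the rank condition $\operatorname{rank}(x-y_i,\nabla q(y_i)) \le 1$ unpacks into the vanishing of the $2\times 2$ minors of the matrix $[x-y_i \mid \nabla q(y_i)]$. It therefore suffices to express the determinant in \eqref{E:alg-degenerate} as a rational function of $(x,\overline y,\overline\lambda,\mu,r)$ whose denominator is a polynomial nonvanishing on the locus of interest; clearing that denominator will yield the desired polynomial equation.

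I first describe the relevant data algebraically. On $C_k(p,q)$, \cref{propo:subgradient1,propo:subgradient2} identify $\partial_x f^\perp$ with
\[
V(x) \;\coloneqq\; \operatorname{span}(x-y_0,\ldots,x-y_k,\nabla p(x))^\perp \;\subseteq\; \R^n,
\]
whose spanning data is polynomial in $(x,\overline y)$; a basis $L=L(x,\overline y)$ for $V(x)$ can be produced by forming suitable cofactors of the $(k+2)\times n$ matrix with rows $x-y_i$ and $\nabla p(x)$, giving polynomial entries on the Zariski-open set where these $k+2$ vectors are linearly independent (a set containing the locus of interest). For the Hessian, writing $g_{y_i}(z)=\|z-\pi_i(z)\|$ for $\pi_i$ the local nearest-point projection onto $Y$ near $y_i$, differentiating $g_{y_i}^2(z)=\|z-\pi_i(z)\|^2$ and invoking the envelope theorem (since $\pi_i(z)$ minimizes $y\mapsto\|z-y\|^2$) yields
\[
H(g_{y_i})(x) \;=\; \tfrac{1}{r}\bigl(I - D\pi_i(x) - v_iv_i^\top\bigr), \qquad v_i \coloneqq \tfrac{x-y_i}{r},
\]
and $D\pi_i(x)$ is obtained by implicit differentiation of the defining relations $q(\pi_i(z))=0$ and $(z-\pi_i(z))\parallel\nabla q(\pi_i(z))$. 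This presents $D\pi_i(x)$ as the solution of a linear system with coefficients polynomial in $(x,y_i,r)$ and in the first two derivatives of $q$ at $y_i$, invertible wherever the construction of $g_{y_i}$ is valid; hence $H(g_{y_i})(x)$ is rational in the data with a controlled denominator.

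Putting the pieces together, the restricted form satisfies $H|_{V(x)} = L^\top H L$ with $H = \mu H(p)(x) + r\sum_i \lambda_i H(g_{y_i})(x)$, and $\det(L^\top H L)$ is polynomial in the entries of $L$ and $H$. Substituting the polynomial and rational expressions above and clearing the common denominator yields a polynomial in $(x,\overline y,\overline\lambda,\mu,r)$ whose vanishing, together with membership in $C_k(p,q)$, defines $D_k(p,q)$; hence $D_k(p,q)$ is algebraic. The main technical obstacle is the implicit-differentiation calculation for $D\pi_i(x)$ and the verification that its denominator is nonzero on the portion of $C_k(p,q)$ of interest; the remainder is essentially bookkeeping.
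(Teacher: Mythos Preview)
Your argument is correct, and the overall architecture---express $H(g_{y_i})(x)$ and the projection $L$ rationally in $(x,\overline y,\overline\lambda,\mu,r)$, then clear denominators---is the same as the paper's. The difference is in how you obtain the Hessian of the local distance. The paper goes through the geometry of $Y$: it computes the second fundamental form via \cref{L:II}, then invokes a classical result (Gilbarg--Trudinger) relating the Hessian of a distance function to the shape operator, arriving at the closed form $H(g_{y_i})(x)=-Q_i(I-rQ_i)^{-1}$ with $Q_i$ built from $H(q)(y_i)$. You instead short-circuit this with the envelope formula $H(g_{y_i})(x)=r^{-1}(I-D\pi_i(x)-v_iv_i^\top)$ and recover $D\pi_i(x)$ by implicitly differentiating the system $q(y)=0$, $z-y=t\nabla q(y)$. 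Both routes land on rational expressions with the same controlled denominators (essentially $r$, $\|\nabla q(y_i)\|^2$, and the Jacobian of the normal-exponential map, which is $\det(I-rQ_i)$ in the paper's language and the determinant of your implicit system in yours). For the projection, the paper uses $L=I_n-V(V^\top V)^{-1}V^\top$ and recasts the determinant as a rank condition, while you build a polynomial basis via cofactors and keep the determinant; these are equivalent on the Zariski-open locus where the $k+2$ spanning vectors are independent.

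What each approach buys: yours is more self-contained, avoiding the curvature machinery and the external reference. The paper's explicit formula $-Q_i(I-rQ_i)^{-1}$, however, is not incidental---it is reused verbatim in the definition of $W^3_{\mathrm{deg}}$ in the proof of \cref{T:genericity-nondegeneracy}, so the paper's detour through the shape operator pays for itself downstream. Both proofs share the same mild looseness: the denominators vanish exactly where $g_{y_i}$ fails to be $\mathcal C^2$ at $x$, i.e.\ where the definition of $D_k(p,q)$ is already moot, so clearing them does no harm on the locus of interest.
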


\begin{proof}
Let $(x, \overline{y}, \overline{\lambda}, r) \in D_k(p, q)$. For all $i \in \{0, \ldots, k\}$, write $\nu_i \coloneqq \frac{\nabla q(y_i)}{\| \nabla q(y_i) \|}$ the unit normal vector from $Y$ at $y_i$. By \cref{L:II}, we have the second fundamental form $\textnormal{II}_{y_i}$ of $Y$ at $y_i$, defined as a bilinear form from $T_{y_i} Y$ to $N_{y_i} Y$. Taking the scalar product with $\nu_i$ gives a quadratic form represented by the matrix $-H(q)(y_i) / \| \nabla q(y_i) \|$. We extend the matrix representation of this form to $N_{y_i} Y$ as follows:
\[
Q_i \coloneqq (\Id - \nu_i \nu_i^\top) \frac{-H(q)(y_i)}{\| \nabla q(y_i) \|} (\Id - \nu_i \nu_i^\top),
\]
with $\Id$ the identity matrix (note that the orientation of $\nu_i$ does not matter). By \cite[Lemma 14.17]{GilbargTrudinger1983}, for each $i \in \{0, \ldots, k\}$, the Hessian of $g_{y_i}$ and $Q_i$ are diagonalizable in the same basis (one consisting of the directions of principal curvature of $Y$ and of a normal vector, see also \cref{subsec:disthyper}), and the eigenvalues of $H(g_{y_i})$ are the image of those of $Q_i$ via the function $\kappa \mapsto \frac{-\kappa}{1 - \| x - y_i \| \kappa}$. We deduce that
\[
H(g_{y_i})(x) = \frac{-Q_i}{1 - \| x - y_i \| Q_i}.
\]
Let $\nu \coloneqq \frac{\nabla p(x)}{\| \nabla p(x) \|}$ and $V \coloneqq [\nu, \nu_0, \ldots, \nu_k] \in \R^{n \times (k + 2)}$ be the matrix of normal vectors. We want to project $H(p)(x)$ and the $H(g_{y_i})(x)$ onto $\partial_x f^\bot \cap \nabla p(x)^\bot$ and then take the convex combination. Since we end up with an $n \times n$ matrix, the equation in \eqref{E:alg-degenerate} becomes
\[
\operatorname{rank} \left( \mu L^\top H(p)(x) L + r \sum_{i = 0}^k \lambda_i L^\top H(g_{y_i})(x) L \right) \leq n - k - 3,
\]
where $L = \Id - V (V^\top V)^{-1} V^\top$, and this equation is algebraic.
\end{proof}

\begin{lemma}
\label{L:deg-is-alg-deg}
For generic $p \in \Poly_{d_1}$ and $q \in \Poly_{d_2}$ with $d_1\geq 3$ and $d_2\geq 4$, every degenerate critical point of $\dist$ not in $Y$ is an algebraic degenerate $k$-critical point for some $k \in \{1, \ldots, n\}$.
\end{lemma}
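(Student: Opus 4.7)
The plan is to translate the analytic notion of degeneracy from Definition \ref{D:nondegenerate-crit-pt} (read in the submanifold setting of Remark \ref{remark:restriction}) into the algebraic condition defining $D_k(p,q)$. Throughout, $p \in \Poly_{d_1}$ and $q \in \Poly_{d_2}$ are chosen generically with $d_1 \geq 3$, $d_2 \geq 4$, which lets us freely combine the conclusions of Corollary \ref{C:C2selection}, Lemma \ref{L:algebraic-critical-points}, Proposition \ref{P:transverse-Y-MY}, and Theorem \ref{T:Yomdin}.

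First, I would set up the local picture. Let $x$ be a critical point of $\dist$ and write $B(x, \dd_Y(x)) \cap Y = \{y_0, \ldots, y_k\}$. By Corollary \ref{C:C2selection}, $\dist$ is a continuous selection $\dist = \min_i f_{y_i}$ of $\mathcal{C}^2$ functions near $x$, with $f_{y_i} = g_{y_i}|_X$. By Lemma \ref{L:algebraic-critical-points}, there exist $\overline{\lambda} \geq 0$ with $\sum_i \lambda_i = 1$, $\mu \in \R$, and $r = \dd_Y(x)$ such that $(x, \overline{y}, \overline{\lambda}, \mu, r) \in C_k(p, q)$. Since $k \geq 1$ forces $r > 0$, we have $x \notin Y$.

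Next, I would invoke Remark \ref{remark:restriction} to unfold the two nondegeneracy conditions for $f = \dist$ viewed as a continuous selection on $X = \{p = 0\}$. For condition (1), the differentials $\{D_x g_{y_j}|_{\ker(D_x p)}\}_{j \in I(x)\setminus\{i\}}$ are linearly independent if and only if, for each $i$, the vectors $\{x - y_j\}_{j \in I(x)\setminus\{i\}} \cup \{\nabla p(x)\}$ are linearly independent (using $\nabla g_{y_j}(x) = (x - y_j)/r$ and the collinearity $\nabla q(y_j) \parallel (x - y_j)$). By Theorem \ref{T:Yomdin} the $y_j$'s form a nondegenerate simplex, so the $k$ vectors $\{(x - y_j) - (x - y_0)\}_{j = 1, \ldots, k}$ are linearly independent; a direct multijet parametric transversality argument in the spirit of Theorem \ref{T:Yomdin} and the proof of Theorem \ref{T:finite-crit-pt} (enlarging $W_k$ to enforce that $\nabla p(x)$ does not collapse any of these $k$-tuples) shows that condition (1) holds at every critical point, for generic $(p, q)$ with $d_1 \geq 3$, $d_2 \geq 4$.

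Then the remaining way for $x$ to be degenerate is the failure of condition (2). By Remark \ref{remark:restriction}, this condition says that the quadratic form
\[
\tilde\mu \, H(p)(x) + \sum_{i = 0}^k \tilde\lambda_i \, H(g_{y_i})(x)
\]
is nondegenerate on $V(x) \cap \ker(D_x p)$, where $(\tilde\mu, \tilde\lambda_i)$ are the unique Lagrange multipliers solving $\sum_i \tilde\lambda_i \nabla g_{y_i}(x) = \tilde\mu \nabla p(x)$ with $\sum_i \tilde\lambda_i = 1$. Substituting $\nabla g_{y_i}(x) = (x - y_i)/r$ and using $\sum \tilde\lambda_i = 1$ gives $x = r\tilde\mu\,\nabla p(x) + \sum_i \tilde\lambda_i y_i$; comparing with the algebraic equation $x = \mu\,\nabla p(x) + \sum_i \lambda_i y_i$ and invoking the uniqueness guaranteed by condition (1), we obtain $\mu = r\tilde\mu$ and $\lambda_i = \tilde\lambda_i$. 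Therefore
\[
\mu\,H(p)(x) + r\sum_{i = 0}^k \lambda_i\,H(g_{y_i})(x) = r\left(\tilde\mu\,H(p)(x) + \sum_{i = 0}^k \tilde\lambda_i\,H(g_{y_i})(x)\right),
\]
and since $r > 0$, the matrix on the left is singular on $\partial_x f^\bot \cap \nabla p(x)^\bot = V(x) \cap \ker(D_x p)$ exactly when condition (2) fails. This is precisely the vanishing determinant in \eqref{E:alg-degenerate}, so $x \in D_k(p, q)$ projects to $x$ and $x$ is an algebraic degenerate $k$-critical point.

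The step I expect to be the main obstacle is the generic verification of condition (1): one must ensure that, across all $k \in \{1, \ldots, n\}$ and all critical points simultaneously, no proper subset of $\{x - y_0, \ldots, x - y_k\}$ together with $\nabla p(x)$ is linearly dependent. The right tool is another application of the multijet submersion Lemma \ref{L:submersion}: enlarge the target by encoding the rank-drop condition as a subvariety $W \subseteq {}_{k+1} J^1(\R^n, \R)$ of codimension strictly exceeding $\dim(\R^n \times (\R^{n(k+1)} \setminus \Delta) \times \R^L)$, then use Corollary \ref{C:param-trans-cor} together with the degree bounds of Example \ref{Ex:poly-degree}.
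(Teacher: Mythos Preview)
Your proposal is correct and follows essentially the same route as the paper: both reduce to showing that a degenerate critical point must fail condition~(2) of Definition~\ref{D:nondegenerate-crit-pt}, then translate that failure via Remark~\ref{remark:restriction} into the determinant condition defining $D_k(p,q)$, matching the multipliers $(\tilde\mu,\tilde\lambda_i)$ with the algebraic data $(\mu,\lambda_i,r)$ exactly as you do.

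The one substantive difference is in the handling of condition~(1). The paper simply asserts that, once the closest-point set is a nondegenerate simplex (Theorem~\ref{T:Yomdin}), degeneracy is equivalent to the failure of condition~(2); it does not spell out a separate argument that the projected differentials $\{D_x g_{y_j}|_{\ker D_x p}\}_{j\neq i}$ remain independent. You are more careful here: you notice that the projection to $T_xX$ could in principle create a linear dependence not ruled out by affine independence of the $y_j$ alone, and you propose to exclude this by an additional multijet transversality step (enlarging $W^1$ by a rank-drop condition of higher codimension and applying Corollary~\ref{C:param-trans-cor}). That is indeed the right fix, and the degree bounds of Example~\ref{Ex:poly-degree} suffice. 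So your argument is slightly more complete than the paper's on this point, while being otherwise identical in structure.
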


\begin{proof}
We pick generic $p$ and $q$ such that, for all $x$ in $X$, the set $B(x, \dd_Y(x)) \cap Y$ is a nondegenerate simplex (\cref{T:Yomdin}) and that $\dist$ is a $\mathcal{C}^2$ selection around each of its critical points (\cref{C:C2selection}). In this context, a critical point is degenerate if, and only if, condition (2) of \cref{D:nondegenerate-crit-pt} holds.

Explicitly, let $x$ be a critical point of $\dist$ not in $Y$ and $\{y_0, \ldots, y_k\} = B(x, \dd_Y(x)) \cap Y$. For all $i \in \{0, \ldots, k\}$, the differential of $f_{y_i}$ at $x$ is the projection of $\nabla g_{y_i}(x)$ on $T_x X$: explicitly, there exists $\mu_i \in \R$ such that
\begin{align}
\label{E:finding-mu}
\nabla f_{y_i}(x) & = \nabla g_{y_i}(x) - \mu_i \nabla p(x) \\
& = \frac{x - y_i}{\|x - y_i\|} - \mu_i \nabla p(x).
\end{align}
Since, by definition of critical points, there exist $\lambda_i$ such that $\sum \lambda_i = 1$ and $\sum \lambda_i \nabla f_{y_i}(x) = 0$, we get
\[
x = \sum_{i = 0}^k (\lambda_i y_i + \lambda_i \mu_i \|x - y_i\| \nabla p(x)).
\]
Let $\mu \coloneqq \sum_I \lambda_i \mu_i \|x - y_i\| = \dd_Y(x) \sum_i \lambda_i \mu_i$. We deduce that $(x, \overline{y}, \overline{\lambda}, \mu, \dd_Y(x))$ is an element of $C_k(p, q)$.

We can write $\dist$ as the continuous selection of $\mathcal{C}^2$ functions
\[
\dist = \min_{i = 0, \ldots, k} f_{y_i}.
\]
Therefore, using \cref{remark:restriction}, we see that the critical point $x$ is degenerate if, and only if, the matrix \[
\sum_{i = 0}^k (\lambda_i H(g_{y_i})(x)|_{\partial_x f^\bot} + \lambda_i \mu_i H(p)(x)|_{\partial_x f^\bot})
=
\frac{\mu}{\dd_Y(x)} H(p)(x)|_{\partial_x f^\bot} + \sum_{i = 0}^k \lambda_i H(g_{y_i})(x)|_{\partial_x f^\bot},
\] is singular. So we conclude that if $x$ is a degenerate critical point of $\dist$, then $(x, \overline{y}, \overline{\lambda}, \mu, \dd_Y(x))$ is an element of $D_k(p, q)$, and so $x$ is an algebraic degenerate $k$-critical point for $k \leq n$.
\end{proof}

\begin{remark}The Morse index of a nondegenerate critical point of $\dist$ in the case $X=Z(p)$ and $Y=\{y\}$ with $y \notin X$ can be computed using the Lagrange's multipliers rule, as follows. First, a critical point of $\dist$ is a point $x\in X$ where 
$$\lambda \nabla p(x)+y-x=0$$
for some $\lambda\neq 0$. Now, assuming that $x$ is a nondegenerate critical point, the matrix representing the Hessian of $\dist$ at $x$ in the basis $\{v_1, \ldots, v_{n - 1}\}$ for $T_xX$ can be written as
$$L^\top\left(\lambda H(p)(x)-\Id\right)L,$$
where $L=[v_1, \ldots, v_{n-1}]$.
\end{remark}

\begin{theorem}[Genericity of nondegeneracy]
\label{T:genericity-nondegeneracy}
For generic $p \in \Poly_{d_1}$ and $q\in \Poly_{d_2}$ with $d_1, d_2 \geq 4$, all critical points of $\dist$ not in $Y$ are nondegenerate.
\end{theorem}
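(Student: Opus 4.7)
The plan is to extend the parametric multijet transversality argument used in the proofs of \cref{T:finite-crit-pt} and \cref{P:exp-reg-alg} to second-order jets of $p$, adding to the system defining $W^2$ the determinantal equation isolated in \cref{L:deg-is-alg-deg}. Logically the argument proceeds in two steps. First, \cref{C:C2selection} together with \cref{L:deg-is-alg-deg} already gives that for generic $(p,q)$ with $d_1 \geq 3$ and $d_2 \geq 4$, every degenerate critical point of $\dist$ is an algebraic degenerate $k$-critical point for some $k \in \{1, \ldots, n\}$. The second step, which is the new content, is to show that for generic $(p, q)$ with $d_1, d_2 \geq 4$ there are no algebraic degenerate $k$-critical points at all; combining the two statements yields the theorem.

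For the second step, fix $k \in \{1, \ldots, n\}$ and consider the parametric multijet map
\[
F^3 \colon \Poly_{d_1} \times \Poly_{d_2} \times \R^n \times (\R^{n(k+1)} \setminus \Delta) \times \R^{k+3} \longrightarrow J^2(\R^n, \R) \times {}_{k+1} J^2(\R^n, \R) \times \R^{k+3}
\]
defined by $(p, q, x, \overline{y}, \overline{\lambda}, \mu, r) \mapsto (j^2(p)(x),\, {}_{k+1} j^2(q)(\overline{y}),\, \overline{\lambda},\, \mu,\, r)$. In the target I introduce the subvariety $W^3_{\mathrm{deg}}$ cut out by the equations defining $W^1$ from the proof of \cref{T:finite-crit-pt} (now formulated on the richer 2-jet target, with the 2-jet coordinates of $p$ and $q$ left free) together with the single polynomial equation
\[
\det\!\left(\mu H(p)(x)|_{\partial_x f^\bot} + r \sum_{i=0}^k \lambda_i H(g_{y_i})(x)|_{\partial_x f^\bot}\right) = 0,
\]
whose algebraic nature in the 2-jet variables is guaranteed by \cref{L:alg-deg-is-alg}. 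The codimension computation in \cref{T:finite-crit-pt} shows that the $W^1$-part already has codimension equal to $\dim(\R^n \times (\R^{n(k+1)} \setminus \Delta) \times \R^{k+3})$, so if the determinantal equation is independent from the others, $W^3_{\mathrm{deg}}$ has codimension strictly greater than the source dimension. Under $d_1, d_2 \geq 4$, \cref{L:submersion} combined with \cref{Ex:poly-degree} ensures that $F^3$ is a submersion, and \cref{C:param-trans-cor} then forces $F^3(p, q, -) \cap W^3_{\mathrm{deg}} = \varnothing$ for generic $(p, q)$; intersecting the resulting residual sets over the finitely many values $k \in \{1, \ldots, n\}$ gives a residual set of $(p, q)$ admitting no algebraic degenerate critical points of any order.

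The main obstacle will be verifying the independence of the determinantal equation from those defining the lifted $W^1$, so that it genuinely increases the codimension by one. The key observation is that the $W^1$-equations constrain only the $0$- and $1$-jet coordinates of $p$ and $q$; the Hessian coordinates $H(p)(x)$ and $H(q)(y_i)$ in the 2-jet target are entirely unconstrained. Perturbing $H(p)(x)$ by a small multiple of the orthogonal projector onto $\partial_x f^\bot$ preserves every $W^1$-equation while changing the determinant by a nonzero amount, which establishes the required codimension-one independence. Hence $W^3_{\mathrm{deg}}$ has codimension strictly greater than the source dimension, and the transversality argument yields the theorem.
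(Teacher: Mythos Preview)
Your proposal is correct and follows essentially the same route as the paper: lift the parametric multijet map to second order in both $p$ and $q$, define the bad locus $W^3_{\mathrm{deg}}$ by adjoining the determinantal (equivalently, rank) condition coming from \cref{L:alg-deg-is-alg} to the lift of $W^1$, and invoke \cref{L:submersion} with \cref{Ex:poly-degree} at $d_1,d_2\ge 4$ to conclude via \cref{C:param-trans-cor}.

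The one point worth tightening is your independence check. Perturbing the Hessian slot $G=H(p)(x)$ alters the restricted matrix only through the coefficient $\mu$, so at points of the lifted $W^1$ with $\mu=0$ your perturbation is inert and does not move the determinant. The paper avoids this by instead perturbing one of the $H_i=H(q)(y_i)$ (specifically, it sets $G=H_1=\cdots=H_k=0$ and chooses $H_0$ so that the rank is maximal; this works because some $\lambda_i$ is necessarily nonzero by $\sum\lambda_i=1$, and $r\neq 0$ since the $y_i$ are distinct). You can either switch to perturbing an $H_i$, or add the remark that $\mu=0$ cuts out a proper subvariety of $W^3$ so that on each top-dimensional component a point with $\mu\neq 0$ exists and your argument applies there.
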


\begin{proof}
Consider the map
\[
F^3 \colon \left\{ \begin{array}{ccc}
\Poly_{d_1} \times \Poly_{d_2} \times \R^n \times (\R^{n(k + 1)} \setminus \Delta) \times \R^L & \to & J^2(\R^n, \R) \times {}_{k + 1} J^2(\R^n, \R) \times \R^L \\
(p, q, x, \overline{y}, \overline{\lambda}, \mu, r) & \mapsto & (x, p(x), \nabla p(x), H(p)(x), \overline{y}, q(\overline{y}), \nabla q(\overline{y}), H(q)(\overline{y}), \overline{\lambda}, \mu, r),
\end{array} \right.
\]
and let
\[
W^3 \coloneqq \{(x, s, u, G, \overline{y}, \overline{t}, \overline{v}, \overline{H}, \overline{\lambda}, \mu, r) \in J^2(\R^n, \R) \times {}_{k + 1} J^2(\R^n, \R) \times \R^{L} \mid (x, s, u, \overline{y}, \overline{t}, \overline{v}, \overline{\lambda}, \mu, r) \in W^1\},
\]
where $W^1$ is defined in the proof of \cref{T:finite-crit-pt}.

Let us find a system equations of equations such that, for generic $p$ and $q$, elements of $F^3(\{p\} \times \{q\} \times D_k(p, q))$ satisfy these equations.
We define the subspace of $J^2(\R^n, \R) \times {}_{k + 1} J^2(\R^n, \R) \times \R^L$
\begin{align}
W^3_{\textnormal{deg}} \coloneqq \Big\{ & (x, s, u, G, \overline{y}, \overline{t}, \overline{v}, \overline{H}, \overline{\lambda}, \mu, r) \in W^3 \\
& \left| \begin{array}{l}
\forall i \in \{0, \ldots, k\}, \; \nu_i \coloneqq \frac{v_i}{\| v_i \|} \in \R^n, \\
\forall i \in \{0, \ldots, k\}, \; Q_i \coloneqq (\Id - \nu_i \nu_i^\top) \frac{H_i}{\|v_i\|} (\Id - \nu_i \nu_i^\top) \in \R^{n \times n}, \\
\forall i \in \{0, \ldots, k\}, \; \widetilde{H}_i \coloneqq \frac{-Q_i}{1 - r Q_i} \in \R^{n \times n}, \\
V \coloneqq [\frac{u}{\|u\|}, v_0, \ldots, v_k] \in \R^{n \times (k + 2)}, \\
L \coloneqq \Id - V (V^\top V)^{-1} V^\top \in \R^{n \times n}, \\
\operatorname{rank} \left( \mu L^\top G L + r \sum_{i = 0}^k \lambda_i L^\top \widetilde{H}_i L \right) \leq n - k - 3 
\end{array}
\right\}.
\end{align}
We need to check that the last inequality is independent of the other equations defining $W^3$. It suffices to find an element of $W^3$ such that this rank is $n - k - 2$. Since the equations defining $W^3$ do not involve $G$ nor $\overline{H}$, we can choose these terms freely. Let $(x, s, u, \overline{y}, \overline{t}, \overline{v}, \overline{H}, \overline{\lambda}, \mu, r)$ be an element of $W^3$. Without loss of generality, we assume $\lambda_0 \neq 0$ and we set $G = H_1 = \cdots = H_n = 0$. Up to scaling, we can assume that $r \neq 1$, and up to orthogonal transformation of $H_0$, we can assume $\nu_0$ is $e_n$, the last element of the canonical basis of $\R^n$. In this case, $\| v_0 \| = 1$, so $Q_0 = \operatorname{diag}(1, \ldots, 1, 0)$ and $\widetilde{H}_0 = \frac{-1}{1 - r} Q_0$. Note that $\widetilde{H}_0^\bot = \R e_n = \R \nu_0$, so the rank of $\lambda_0 L^\top \widetilde{H}_0 L$ is maximal; that is, its rank is $n - k - 2$.

We deduce that $W^3_{\textnormal{deg}}$ has codimension $> nk + 2n + k + 3$, which is the dimension of $\R^n \times (\R^{n(k + 1)} \setminus \Delta) \times \R^L$. By \cref{L:submersion} and \cref{Ex:poly-degree}, for $d_1, d_2 \geq 4$, the map $F^3$ is a submersion.
By \cref{C:param-trans-cor}, for generic $p$ and $q$, the image $\operatorname{im} F^3(p, q, -)$ misses $W^3_{\textnormal{deg}}$. In particular, the image of $D_k(p, q)$ by $F^3(p, q, -)$ misses $W^3_{\textnormal{deg}}$. By \cref{L:alg-deg-is-alg,L:deg-is-alg-deg}, we conclude that, for generic $p$ and $q$, the critical points of $\dist$ not in $Y$ are nondegenerate.
\end{proof}

\begin{remark}
\label{R:X-is-Rn-case}
The proofs throughout \cref{S:Genericity-algebraic} do not work as-is for the case $X = \R^n$ because we require $p$ to be of degree at least $3$. However, the results are still true, as can be verified by running through the proofs of \cref{T:finite-crit-pt,P:exp-reg-alg,T:genericity-nondegeneracy} with the parameters $p$, $\mu$, $s$, $u$, and $G$ removed from the definition of $W^1$, $W^2$, and $W^3$, and noting that \cref{L:submersion} can still be applied.
\end{remark}

\bibliographystyle{alpha}
\bibliography{references}

\end{document}